\documentclass[10pt,twoside]{siamart1116}

\usepackage[english]{babel}
\usepackage{graphicx,epstopdf,epsfig}
\usepackage{amsfonts,epsfig,fancyhdr,graphics, hyperref,amsmath,amssymb}
\usepackage{amsfonts}
\usepackage{amscd}
\usepackage{amsmath}
\usepackage{amssymb}
\usepackage{epic}
\usepackage{multirow}
\usepackage{makeidx}
\usepackage{mathscinet}
\usepackage{hyperref}
\usepackage{graphicx}
\usepackage{subfigure}
\usepackage{fancyhdr}
\usepackage[english]{babel}
\usepackage[latin1]{inputenc}
\usepackage{srcltx}
\usepackage{epstopdf}
\usepackage{geometry}
\usepackage{marginnote}
\usepackage{mathdots}
\DeclareMathOperator{\heads}{\mathrm{heads}}

\usepackage{blkarray}

\usepackage{color}
\usepackage{mathtools}
\usepackage{enumerate}
\usepackage{verbatim}
\usepackage{arydshln}
\DeclareMathOperator{\csf}{\mathrm{csf}}
\DeclareMathOperator{\rev}{\mathrm{rev}}
\DeclareMathOperator{\diag}{\mathrm{diag}}
\setlength{\textheight}{210mm}
\setlength{\textwidth}{165mm}
\topmargin = -10mm

\setlength{\parskip}{.1in}

\definecolor{Myblue2}{rgb}{0.7,0,1}

\newcommand{\red}{\color{red} }





\newcommand{\HE}{Handling Editor}
\newcommand{\DoS}{Date of Submission}
\newcommand{\DoA}{Date of Acceptance}

\newcommand{\Names}{M. I. Bueno, M. Martin, J. Perez, A. Song, and I. Viviano}
\newcommand{\Title}{Explicit block-structures for block-symmetric Fiedler-like pencils}




\renewtheorem{theorem}{Theorem}[section]
\newtheorem{remark}[theorem]{Remark}
\newtheorem{example}[theorem]{Example}

\begin{document}

\bibliographystyle{plain}

\setcounter{page}{1}

\thispagestyle{empty}

 \title{\Title\thanks{Received
 by the editors on \DoS.
 Accepted for publication on \DoA. 
 Handling Editor: \HE.}}
 
 \author{
M. I. Bueno\thanks{Department of Mathematics and College of Creative Studies, University of California, Santa Barbara, CA 93106, USA ({\tt mbueno@math.ucsb.edu}). 
The research of M. I. Bueno was partially supported by NSF grant DMS-1358884 and partially supported by Ministerio de Economia y Competitividad of Spain through grants MTM2015-65798-P.}
\and
M. Martin \thanks{Brown University Providence ({\tt madeline\_martin@brown.edu}).
The research of M. Martin was partially supported by NSF grant DMS-1358884.}
\and
J. P\'erez \thanks{Department of Mathematical Sciences, University of Montana, MT, USA ({\tt javier.perez-alvaro@mso.umt.edu.}). The research of J. P\'erez was partially supported by KU Leuven Research Council grant OT/14/074 and the Interuniversity Attraction Pole DYSCO, initiated by the Belgian State Science Policy Office.}
\and
A. Song \thanks{University of California, Santa Barbara ({\tt alexandersong@umail.ucsb.edu}).
The research of A. Song was partially supported by NSF grant DMS-1358884.}
\and
I. Viviano \thanks{Wake Forest University ({\tt viviiv14@wfu.edu}).
The research of  I. Viviano was partially supported by NSF grant DMS-1358884.}}


\markboth{\Names}{\Title}

\maketitle

\begin{abstract}
In the last decade, there has been a continued effort to produce families of strong linearizations of a matrix polynomial $P(\lambda)$, regular and singular, with good  properties, such as, being companion forms, allowing the recovery of eigenvectors of a regular $P(\lambda)$ in an easy way, allowing the computation of the minimal indices of a singular $P(\lambda)$ in an easy way, etc.
 As a consequence of this research, families such as the family of Fiedler pencils, the family of generalized Fiedler pencils (GFP), the family of Fiedler pencils with repetition, and the family of generalized Fiedler pencils with repetition (GFPR) were constructed. 
In particular, one of the goals was to find in these families structured linearizations of structured matrix polynomials. For example, if a matrix polynomial $P(\lambda)$ is symmetric (Hermitian), it is convenient to use linearizations of $P(\lambda)$ that are also symmetric (Hermitian). Both the family of GFP and the family of GFPR  contain block-symmetric linearizations of $P(\lambda)$, which are symmetric (Hermitian) when $P(\lambda)$ is. Now the objective is to determine which of those structured linearizations have the best numerical properties. The main obstacle for this study is the fact that these pencils are defined implicitly as products of so-called elementary matrices.  Recent  papers in the literature had as a goal to  provide an  explicit block-structure for the pencils belonging to the family of Fiedler pencils and any of its further generalizations to solve this problem. In particular, it was shown that all GFP and GFPR, after permuting some block-rows and block-columns, belong to the family of extended block Kronecker pencils, which are defined explicitly in terms of their block-structure. Unfortunately, those permutations that transform a GFP or a GFPR into an extended block Kronecker pencil do not preserve the block-symmetric structure. Thus, in this paper we consider the family of block-minimal bases pencils, which is closely related to the family of  extended block Kronecker pencils, and whose pencils are also defined in terms of their block-structure, as a source of canonical forms for block-symmetric pencils. More precisely,  we present four families of block-symmetric pencils which, under some generic nonsingularity conditions are block minimal bases pencils and strong linearizations of a matrix polynomial. 
 We  show that the block-symmetric GFP and GFPR, after some row and column permutations, belong to the union of these four families.
Furthermore, we show that, when $P(\lambda)$ is a complex matrix polynomial, any block-symmetric GFP and GFPR is permutationally congruent to a pencil in some of these four families.
Hence, these four families of  pencils provide an alternative but explicit approach to the block-symmetric Fiedler-like pencils existing in the literature. 
\end{abstract}

\begin{keywords}
Fiedler pencil, block-symmetric generalized Fiedler pencil, block-symmetric generalized Fiedler pencil with repetition, matrix polynomial, strong linearization,  symmetric strong linearization, block Kronecker pencil, extended block Kronecker pencil, block minimal bases pencil
\end{keywords}
\begin{AMS}
65F15, 15A18, 15A22, 15A54.
\end{AMS}



\section{Introduction}

The standard approach to numerically solving a \emph{polynomial eigenvalue problem (PEP)} associated with a matrix polynomial (whose matrix coefficients have entries in a field $\mathbb{F}$) of the form
\begin{equation}\label{eq:polynomial}
P(\lambda)=\sum_{i=0}^k A_i \lambda^i, \quad \mbox{with }A_0,A_1,\hdots,A_k\in\mathbb{F}^{m\times n},
\end{equation}
starts by embedding the coefficients of $P(\lambda)$ into a matrix pencil (that is, a matrix polynomial of grade equal to 1). 
This process is known as \emph{linearization}, and it transforms the given PEP into a \emph{generalized eigenvalue problem (GEP)}.
Then, the obtained GEP  can be solved by using the QZ algorithm \cite{QZ} or the staircase algorithm \cite{staircase,VanDooren83}, for example.

The literature on linearizations is huge as can be seen, for example, by counting all the references in \cite{canonical_Fiedler} concerning this topic. 
The best well-known examples  of linearizations of a matrix polynomial $P(\lambda)$ as in (\ref{eq:polynomial}) are the so-called \emph{Frobenius companion forms} given by
\[
\begin{bmatrix}
\lambda A_k+A_{k-1} & A_{k-2} & \cdots &  A_0 \\ 
-I_n & \lambda I_n \\
& \ddots & \ddots \\
& & -I_n & \lambda I_n 
\end{bmatrix}
\quad \mbox{and} \quad 
\begin{bmatrix}
\lambda A_k+A_{k-1} & -I_m \\
A_{k-2} & \lambda I_m & \ddots  \\
\vdots & & \ddots & -I_m \\
A_0 & &   & \lambda I_m
\end{bmatrix}.
\]

We note that here and throughout the paper, we sometimes omit the  block-entries of a matrix polynomial that are equal to zero as we have done above. The algorithm QZ implemented in Matlab to solve the PEP uses the first Frobenius companion form as a linearization by default.

Frobenius companion forms have  many desirable properties from a numerical point of view, as i) they are constructed from the matrix coefficients of $P(\lambda)$ without performing any arithmetic operations; ii) they are strong linearizations of $P(\lambda)$ regardless of whether $P(\lambda)$ is regular or singular \cite{DTDM10,DTDM12}; iii) the minimal indices of $P(\lambda)$ are related with the minimal indices of the Frobenius companion forms by uniform shifts {\red \cite{singular,DTDM10}}; iv)  eigenvectors of regular matrix polynomials and minimal bases of singular matrix polynomials  are easily recovered from those of the Frobenius companion forms \cite{DTDM10};  and v) solving PEP's by applying a backward stable eigensolver to the Frobenius companion forms is backward stable \cite{canonical,VanDooren83}.
Nonetheless, solving a PEP by solving the GEP associated with a Frobenius companion form presents some significant drawbacks.
For instance, if the matrix polynomial $P(\lambda)$ is symmetric (Hermitian), that is $P(\lambda)^T=P(\lambda)$ ( $\mathbb{F}=\mathbb{C}$ and $P(\lambda)^* = P(\lambda)$), neither of the Frobenius companion forms is symmetric (Hermitian). 
Since the preservation of the structure has been recognized as key for obtaining better (and physically more meaningful)  numerical results \cite{GoodVibrations}, this drawback has motivated an intense research on structure-preserving linearizations; see, for example \cite{Her-GFPR,FPR1,GFPR,FPR2,FPR3,DTDM11,Philip2016,GoodVibrations,Leo,ant-vol11}, to name a few recent references on this topic. There are many  papers in the literature addressing the problem of constructing symmetric (Hermitian) strong linearizations of symmetric (Hermitian) matrix polynomials.
 Most of these papers approach the problem by constructing first {block-symmetric} strong linearizations, as it is done, for example, in \cite{FPR1,GFPR,symmetric-Mackey,GoodVibrations}.
 
Among the block-symmetric linearizations in the literature, it has been shown that, within the vector space   of block-symmetric pencils $\mathbb{DL}(P)$ \cite{symmetric-Mackey,v4space}, the first and last pencils in its standard basis, denoted by $D_1(\lambda, P)$ and $D_k(\lambda, P)$, respectively, have almost optimal behavior in terms of conditioning and backward error when used to compute an eigenvalue $\delta$ of $P(\lambda)$, as long as $|\delta| \geq 1$ if $D_1(\lambda, P)$ is used or $|\delta| \leq 1$ if  $D_k(\lambda, P)$ is used \cite{condition,backward}. A natural question is whether a single block-symmetric linearization can be found with good conditioning and backward error regardless of the modulus of $\delta$ or if any block-symmetric linearizations  outside $\mathbb{DL}(P)$ present a better numerical behavior than $D_1(\lambda, P)$ and $D_k(\lambda, P)$. One possible approach to answering these questions consists in replacing some nonzero blocks of the form $\pm A_i$ in the matrix coefficients  of these pencils (which can be seen as block-matrices whose  blocks are of the form $0$, $\pm I_n$,  and $\pm A_i$) by zero or identity blocks. But in order to do that, it is necessary to identify which of those blocks are essential  to keep a given linearization  a linearization of $P(\lambda)$ as they are replaced by zero or identity blocks. Thus, an explicit block-structure of the well-known block-symmetric pencils in the literature that allows to determine easily if they are a linerization of $P(\lambda )$ or not can be useful, for example, to accomplish this goal. In this paper we focus on the block-symmetric pencils in the families of Fiedler-like pencils presented in \cite{FPR1,GFPR}, which are known as \emph{block-symmetric generalized Fiedler pencils} \emph{(block-symmetric GFP)} and  \emph{block-symmetric generalized} \emph{ Fiedler pencils with repetition} \emph{(block-symmetric GFPR)}.
The block-symmetric GFP are strong linearizations of any  $P(\lambda)$. 
 The block-symmetric GFPR are  strong linearizations of $P(\lambda)$ modulo some generic nonsingularity conditions. Moreover, all block-symmetric GFP and GFPR  are symmetric (Hermitian) when $P(\lambda)$ is.
Furthermore, they share some of the desirable properties of the Frobenius companion forms mentioned above.
 The main disadvantage of these pencils is that they were defined implicitly in terms of products of elementary matrices, which makes it difficult to study their algebraic and numerical properties. Thus, identifying their block structure might solve some of these difficulties.

The family of block minimal bases pencils was recently constructed with the goal of  performing a backward stability analysis of PEP's when solved  by linearization  \cite{canonical}.
These pencils are defined by their explicit block-structure. Moreover, it has been shown that, modulo some generic nonsingularity conditions, Fiedler pencils, generalized Fiedler pencils, Fiedler pencils with repetition (and, thus, the standard basis of the $\mathbb{DL}(P)$ space), and generalized Fiedler pencils with repetition are \emph{permutationally equivalent} to block minimal bases pencils \cite{canonical_Fiedler,canonical}.  
However, none of these results takes into account any extra structural properties that these pencils might possess. 
For example, given a block-symmetric GFPR, the results in \cite{canonical_Fiedler,canonical} do not guarantee that this pencil is \emph{permutationally block-congruent}\footnote{Given two block-symmetric pencils $L_1(\lambda)$ and $L_2(\lambda)$, we say that they are \emph{permutationally block-congruent} if there exists a  block-permutation matrix $Q$  such that $ L_1(\lambda)= Q  L_2(\lambda) Q^{\mathcal{B}}$, where $M^{\mathcal{B}}$ denotes the block-transpose of the matrix $M$} to a block-symmetric  block minimal bases pencil.  The  focus of this paper is not on  constructing new families of block-symmetric  pencils but on identifying a family of block-symmetric pencils, that under some generic nonsingularity conditions are block minimal bases pencils, and  showing that the block-symmetric GFP and the block-symmetric GFPR are  permutationally block-congruent to a pencil in that family. 
 This family of block-symmetric minimal bases pencils can be divided into four subfamilies, two associated with odd degree polynomials and two associated with even degree polynomials.
 Each of these subfamilies is built by applying certain block-congruences to a very simple block-symmetric block minimal bases pencil, the ``skeleton'' or ``generator'' of the family. The ``skeleton'' of each family contains a ``minimal'' block-structure (in the sense that its matrix coefficients contain more zero blocks and less nonzero nonidentity
blocks than any other pencil in the family) that guarantees it being a strong linearization of a given matrix polynomial $P(\lambda)$. 
Hence, this approach allows to identify the block-entries of the block-structure of strong linearizations based on block-symmetric Fiedler-like pencils (including the basis of
$\mathbb{DL}(P)$) that are essential to embed the spectral information of $P(\lambda)$ in the pencil and the block-entries that are not while preserving the block-symmetry. 
We expect these ``skeletons'' to be candidates to have optimal numerical properties among the block-symmetric linearizations in the family they ``generate''.

The rest of the paper is structured as follows.
In section \ref{sec:notation}, we review the basic theory of matrix polynomials, linearizations, minimal bases and dual minimal bases needed throughout the paper.
In Section \ref{sec:BMBP}, we recall the definitions of the family of  block minimal bases pencils and the family of  extended block Kronecker pencils.
By using extended block Kronecker pencils, we introduce in Section \ref{sec:four-families} four families of block-symmetric pencils which are block minimal bases pencils under generic nonsingularity conditions, and contain infinitely many  block-symmetric strong linearizations of a matrix polynomial.
These pencils are explicitly defined in terms of their block-entries.  
 In Section \ref{sec:GFPR-def}, we recall the definitions of block-symmetric GFP and block-symmetric GFPR.
Finally,  in Section \ref{sec:main-GFP}  we give a result that states that  the block-symmetric GFP associated with an odd degree matrix polynomial and any block-symmetric GFPR  is permutationally block-congruent to a pencil belonging to some of the four families introduced in Section \ref{sec:four-families}.
Thus, these four families of pencils provide an alternative and simplified approach to block-symmetric Fiedler-like pencils by providing their explicit block-structure. 
The proof of the result for the block-symmetric GFPR turns out to be quite involved, long and highly technical. One reason for this is that the family of block-symmetric GFPR is infinite and we are stating theorems that hold true for all the pencils in this  family. The other reason, as we said before, is that  these pencils are defined in an implicit way in terms of products of matrices, which makes the work with them quite cumbersome. The implicit definition of these pencils also leads to the use of a very heavy notation. Since the proof of this result is very similar to the proof of Theorem 8.1 in \cite{canonical_Fiedler}, we include its proof in the Appendix. Now that we have an explicit definition of the block-symmetric GFPR in terms of their block entries, all the notation and the original implicit definition can be abandoned. 
What remains is a simpler description of block-symmetric Fiedler-like linearizations as block-symmetric block minimal bases pencils. This explicit definition of the block-symmetric GFPR has already proven to be useful. In \cite{TPodd,TPeven}, it has been used to identify sparse pencils that outperform numerically (in terms of conditioning and backward error) the block-symmetric linearizations $D_1(\lambda, P)$ and $D_k(\lambda, P)$ in the standard basis of $\mathbb{DL}(P)$.

\section{Notation and background}\label{sec:notation}

 Throughout the paper, we use the following notation. 
If $a$ and $b$ are two integers, we define
$$a:b:=\left\{ \begin{array}{ll} a, a+1,\ldots, b, & \textrm{if $a\leq b$,}\\
\emptyset, & \textrm{if $a>b$.}
 \end{array}\right. $$

In this work we consider square matrix polynomials whose matrix coefficients have entries in a field $\mathbb{F}$, that is, matrix polynomials as in \eqref{eq:polynomial} with $m=n$.
The number $k$ in \eqref{eq:polynomial} is called the \emph{grade} of $P(\lambda)$.
The \emph{degree} of $P(\lambda)$ is defined as the largest $d$ such that $A_d\neq 0$.
Notice that the degree is a number intrinsic to $P(\lambda)$, while the grade is an option (larger than or equal to the degree).

A square matrix polynomial $P(\lambda)$ is said to be \emph{regular} if the scalar polynomial $\det (P(\lambda))$ is not the zero polynomial; otherwise $P(\lambda)$ is said to be \emph{singular}.
Furthermore, if $\det P(\lambda)\in\mathbb{F}$, $P(\lambda)$ is called a \emph{unimodular matrix polynomial}.
The \emph{complete eigenstructure} of a regular matrix polynomial consists of its finite and infinite elementary divisors.
For a singular matrix polynomial, the complete eigenstructure consists of its finite and infinite elementary divisors together with its right and left minimal indices.
For more detailed definitions of the complete eigenstructure of matrix polynomials, we refer the reader to \cite[Section 2]{IndexSum}.

By the \emph{polynomial eigenvalue problem (PEP)} associated with a matrix polynomial $P(\lambda)$, we refer to the problem of computing the complete eigenstructure of $P(\lambda)$.
If $P(\lambda)$ is a matrix pencil, the associated PEP is referred to as a \emph{generalized eigenvalue problem (GEP)}.
A \emph{strong linearization} of a regular matrix polynomial $P(\lambda)$ is a matrix pencil $\mathcal{L}(\lambda)$ having the same finite and infinite elementary divisors as $P(\lambda)$; when $P(\lambda)$ is singular, a strong linearization must also have the same numbers of right and left minimal indices as $P(\lambda)$.
Hence, the PEP associated with the polynomial $P(\lambda)$ can be solved by solving the GEP associated with $\mathcal{L}(\lambda)$ provided that the minimal indices of $P(\lambda)$ and $\mathcal{L}(\lambda)$ are related in a simple way.

Given two matrix polynomials $P(\lambda)$ and $Q(\lambda)$ of the same size, we recall the following equivalence relations.
The polynomials $P(\lambda)$ and $Q(\lambda)$ are said to be 
\begin{itemize}
\item[\rm (i)] \emph{unimodularly equivalent} if there are unimodular matrix polynomials $U(\lambda)$ and $V(\lambda)$ such that $Q(\lambda)=U(\lambda)P(\lambda)V(\lambda)$; and
\item[\rm (ii)] \emph{strictly equivalent} if there are nonsingular constant matrices $U$ and $V$ such that $Q(\lambda) = UP(\lambda)V$.
\end{itemize}
 We recall that unimodular equivalence preserves the finite eigenstructure of matrix polynomials, while strict equivalence preserves the whole eigenstructure \cite{gantmacher}. 
 In this work we also use  the following concepts extensively.
\begin{itemize}
\item[\rm (i)] Given an $s\times t$ block matrix $M=[M_{ij}]$ with $n\times n$ block-entries $M_{ij}$, the \emph{block-transpose} matrix of $M$, denoted by $M^\mathcal{B}$, is the $t\times s$ block-matrix whose $(i,j)$ block-entry is $M_{ji}$. 
\item[\rm (ii)] Given a $k\times k$ block matrix $M=[M_{ij}]$ with $n\times n$ block-entries $M_{ij}$, we say that $M$ is \emph{block-symmetric} if $M^\mathcal{B}=M$.
\item[\rm (iii)] A $kn\times kn$ permutation matrix $\Pi$ is called a \emph{block-permutation} matrix if $\Pi=P\otimes I_n$, for some $k\times k$ permutation matrix $P$, where $\otimes$ denotes the Kronecker product of two matrices.
\item[\rm (iv)] We say that the matrix polynomials $P(\lambda)$ and $Q(\lambda)$ are \emph{permutationally equivalent} if there are permutation matrices $\Pi_1$ and $\Pi_2$ such that $Q(\lambda) = \Pi_1 P(\lambda) \Pi_2$.
\item[\rm (v)] We say that the $kn\times kn$ matrix polynomials $P(\lambda)$ and $Q(\lambda)$  are \emph{permutationally block-congruent} if there exists a block-permutation matrix $\Pi$ such that $Q(\lambda)=\Pi P(\lambda) \Pi^\mathcal{B}$.
\end{itemize} 
We notice that permutational equivalence and, thus, permutational block-congruency  are particular instances of strict equivalence.
Hence, the matrix polynomials $P(\lambda)$,  $\Pi P(\lambda) \Pi^\mathcal{B}$ and $\Pi_1 P(\lambda)\Pi_2$ have the same eigenstructure (finite, infinite and singular).
 Furthermore, since the block-entries of any block permutation matrix $\Pi$ are either the zero or the identity matrices, $P(\lambda)$ is block-symmetric if and only if $\Pi P(\lambda) \Pi^\mathcal{B}$ is block-symmetric.

Here and thereafter, we denote by $\mathbb{F}[\lambda]^{m\times n}$ the set of $m\times n$ matrix polynomials, by $\mathbb{F}(\lambda)$ the field of rational functions over $\mathbb{F}$ and by $\mathbb{F}(\lambda)^n$ the set of $n$-tuplas with entries in $\mathbb{F}(\lambda)$.
By $\overline{\mathbb{F}}$ we denote the algebraic closure of $\mathbb{F}$.
Any subspace $\mathcal{W}\subseteq \mathbb{F}(\lambda)^n$ is called a \emph{rational subspace}.
We recall that any  $\mathcal{W}\subseteq \mathbb{F}(\lambda)^n$ has bases consisting entirely of vectors with polynomial entries.

Key for this work are the so-called \emph{minimal bases} and \emph{dual minimal bases}, introduced by Forney \cite{Forney}.
For their definitions, we rely on the concept of  \emph{row-degrees vector} of  an $m \times n$ matrix polynomial  $P(\lambda)$, which is a row vector of length $m$ whose $i$th component is  the maximum of the degrees of the entries in the $i$th row of $P(\lambda)$.
For example, the row-degrees vector of the matrix
\begin{equation}\label{eq:example}
\begin{bmatrix}
1 & \lambda^2 & 1-\lambda \\
0 & 1 & \lambda 
\end{bmatrix}
\end{equation}
is $[ 2, 1]$.
\begin{definition}
Let $\mathcal{W}$ be a  rational subspace of $ \mathbb{F}(\lambda)^n$.
We say that a matrix polynomial $L(\lambda)\in\mathbb{F}[\lambda]^{m\times n}$ is a \emph{minimal basis} of $\mathcal{W}$ if  its rows form a basis for $\mathcal{W}$  and the sum of the entries of its row-degrees vector is minimal among all the possible polynomial bases for $\mathcal{W}$.
Furthermore, the entries of the  row-degrees vector of $L(\lambda)$ are called the \emph{minimal indices} of $\mathcal{W}$. 
\end{definition}
\begin{remark} 
For simplicity,  we  say that ``$L(\lambda)\in \mathbb{F}[\lambda]^{m\times n}$ is a minimal basis'' to mean that ``$L(\lambda)$ is a minimal basis for the subspace of $\mathbb{F}(\lambda)^n$ spanned by its rows''.
\end{remark}

The following characterization of minimal bases is very useful in practice.
\begin{theorem}\label{thm:minimal_basis}
Let $L(\lambda)\in \mathbb{F}[\lambda]^{m\times n}$ and let $[d_1, \ldots, d_m]$ be the row-degrees vector of $L(\lambda)$. 
Then, $L(\lambda)$ is a minimal basis if and only if $L(\lambda_0)$ has full row rank for all $\lambda_0 \in \overline{\mathbb{F}}$ and the $m\times n$ constant matrix whose $(i, j)$th entry is the coefficient of $\lambda^{d_j}$ in the $(i, j)$th entry of $L(\lambda)$  has full row rank.
\end{theorem}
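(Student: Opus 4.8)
The plan is to prove both implications by relating the two stated rank conditions to the structure of the matrix polynomial $L(\lambda)$ and, ultimately, to the Forney-type criterion characterizing minimal bases. First I would recall (or re-derive) the classical fact, essentially due to Forney, that $L(\lambda)\in\mathbb{F}[\lambda]^{m\times n}$ is a minimal basis of the row space it spans if and only if (a) $L(\lambda_0)$ has full row rank for every $\lambda_0\in\overline{\mathbb F}$ (i.e.\ $L(\lambda)$ has no finite eigenvalues, equivalently its Smith form has all nonzero invariant factors equal to $1$), and (b) $L(\lambda)$ has no zeros at infinity, which is the statement that the \emph{highest row-degree coefficient matrix} has full row rank. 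Condition (a) guarantees that the rows form a basis of a rational subspace with no nontrivial polynomial combinations of lower degree hidden inside, and condition (b) guarantees minimality of the sum of row degrees; together they are equivalent to $L(\lambda)$ being \emph{row-reduced} and having no finite elementary divisors, which is the textbook characterization of a minimal basis. So the real content is to identify the second rank condition in the theorem statement — the one about the $(i,j)$ entry being the coefficient of $\lambda^{d_j}$ — with the ``no zeros at infinity'' condition (b).

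The key observation for that identification is to rewrite $L(\lambda)$ using the row-degrees: let $D(\lambda)=\diag(\lambda^{d_1},\ldots,\lambda^{d_m})$ and write $L(\lambda) = D(\lambda)\, \widetilde L(1/\lambda)\cdot(\text{something})$, or more cleanly, examine $\mathrm{rev}\, L(\lambda) := \lambda^{\max_i d_i}\, L(1/\lambda)$ and the reversal row by row. A cleaner route is the following: define the constant matrix $L_h$ whose $(i,j)$ entry is the coefficient of $\lambda^{d_j}$ in the $(i,j)$ entry of $L(\lambda)$. Then for large $\lambda$ one has $L(\lambda) = L_h\, \Lambda(\lambda) + (\text{lower-order terms in each column})$ after an appropriate column scaling $\Lambda(\lambda)=\diag(\lambda^{d_1},\ldots)$ — but since the $d_j$ here index columns by ``the degree pattern of column $j$'', one must be careful: in a minimal basis the structure is such that along each column the degrees are controlled by the row-degrees, and the matrix $L_h$ is exactly the leading-coefficient matrix that appears when one factors out the column diagonal $\diag(\lambda^{d_1},\ldots,\lambda^{d_m})$ from the rows. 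I would make this precise, showing $L(\lambda)\,\Lambda(\lambda)^{-1} \to L_h$ as $\lambda\to\infty$ entrywise, so that $L_h$ has full row rank if and only if $L(\lambda)$ has full row rank for all sufficiently large $\lambda$, which combined with (a) for finite points gives full row rank at infinity — i.e.\ $\mathrm{rev}_1 L(\lambda)$ has full row rank at $\lambda=0$. Conversely, the failure of $L_h$ to have full row rank produces a nonzero constant left null vector and hence either a drop of rank at infinity or a polynomial dependency reducing the row-degree sum, contradicting minimality.

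For the forward direction ($L$ a minimal basis $\Rightarrow$ both rank conditions): condition (a) is immediate, since a minimal basis must in particular span a rational subspace whose polynomial basis it is, so no $\lambda_0$ can drop the row rank (else a unimodular left factor would strictly lower the degrees). Condition on $L_h$: if $L_h$ were rank-deficient, pick $0\neq v^T\in\mathbb F^{1\times m}$ with $v^T L_h = 0$; then $v^T L(\lambda)$ has, in each entry, the $\lambda^{d_j}$ coefficient vanishing, so the combination $v^TL(\lambda)$ has strictly smaller degree pattern than $[d_1,\ldots,d_m]$ predicts, and replacing the corresponding row of $L(\lambda)$ by this combination yields a polynomial basis with smaller row-degree sum — contradicting minimality. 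For the converse direction, assume both rank conditions; condition (a) shows the rows of $L(\lambda)$ are $\mathbb F(\lambda)$-linearly independent and that no nonconstant polynomial divides any nontrivial left combination (otherwise $L(\lambda_0)$ would lose rank), so $L(\lambda)$ is a polynomial basis of its row space; the $L_h$ condition then gives row-reducedness, and by the standard predictable-degree / minimality theorem this forces the row-degree sum to be minimal.

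The main obstacle I expect is \emph{pinning down the precise meaning of the indices $d_j$ in the second rank condition} and matching it to the leading-coefficient (row-reduced) matrix: in the general (possibly non-minimal) setting the ``coefficient of $\lambda^{d_j}$ in the $(i,j)$ entry'' requires $d_j$ to be at least the degree of that entry, and one must argue that for the relevant $L(\lambda)$ the row-degrees vector $[d_1,\ldots,d_m]$ simultaneously bounds the column degrees appropriately so that $L_h$ is genuinely the highest-degree coefficient matrix. Handling this bookkeeping carefully — essentially showing that the degree of the $(i,j)$ entry never exceeds $d_j$ in the situations of interest, and that $L_h$ is exactly $\lim_{\lambda\to\infty} L(\lambda)\diag(\lambda^{-d_1},\ldots,\lambda^{-d_m})$ — is the technical heart; once that is in place, the equivalence with Forney's criterion is routine. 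I would cite \cite{Forney} and the minimal-bases background in \cite{canonical} (or the corresponding section of \cite{IndexSum}) for the underlying row-reducedness characterization rather than reprove it from scratch.
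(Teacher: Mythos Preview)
The paper does not prove this theorem; it is stated in Section~\ref{sec:notation} as a well-known background characterization (essentially Forney's criterion for minimal bases) and used without proof. So there is nothing in the paper to compare your argument against beyond the bare statement.

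That said, your proposal is on the right track but is partly derailed by a typo in the statement. The row-degrees vector $[d_1,\ldots,d_m]$ has one entry per \emph{row}, so the constant matrix in the second condition should have as its $(i,j)$ entry the coefficient of $\lambda^{d_i}$ (not $\lambda^{d_j}$) in the $(i,j)$ entry of $L(\lambda)$. The example immediately following the theorem confirms this reading: for the $2\times 3$ matrix with row-degrees $[2,1]$, the resulting constant matrix is obtained by taking the $\lambda^2$-coefficients along the first row and the $\lambda^1$-coefficients along the second. This matrix is exactly the \emph{highest row-degree coefficient matrix} (leading row coefficient matrix), and the second condition is nothing more than row-reducedness. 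Once you make that correction, the ``main obstacle'' you worried about---reconciling $d_j$ with column degrees, bounding entry degrees by $d_j$, and taking a column-scaled limit---disappears entirely: the degree of the $(i,j)$ entry is automatically at most $d_i$ by the definition of row degree, and $L_h = \lim_{\lambda\to\infty}\diag(\lambda^{-d_1},\ldots,\lambda^{-d_m})L(\lambda)$ with no further work.

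With that fix in place, your outline is the standard one: (a) full row rank at every finite $\lambda_0$ is equivalent to the Smith form having all invariant factors equal to $1$ (irreducibility / no finite zeros), and (b) full row rank of $L_h$ is row-reducedness (no zeros at infinity); together these are Forney's characterization of a minimal basis. Your forward-direction argument via a rank-dropping left combination of $L_h$ producing a strictly smaller row-degree sum is correct, and the converse via the predictable-degree property is the textbook route. Citing \cite{Forney} or the minimal-bases background in \cite{canonical,IndexSum} for the underlying equivalence is exactly what the paper itself implicitly does.
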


\begin{example}
The matrix polynomial in \eqref{eq:example} is a minimal basis because it clearly has full row rank for every $\lambda_0\in \overline{\mathbb{F}}$ and the matrix $\left[\begin{smallmatrix} 0 & 1 & 0 \\ 0 & 0 & 1 \end{smallmatrix}\right]$  has full row rank as well.
\end{example}

\begin{definition} \label{def:dualminimalbases}
Two matrix polynomials $L(\lambda)\in\mathbb{F}[\lambda]^{m_1\times n}$ and $N(\lambda)\in\mathbb{F}[\lambda]^{m_2\times n}$ are called \emph{dual minimal bases} if $m_1+m_2 = n$, $L(\lambda)N(\lambda)^T=0$, and $L(\lambda)$ and $N(\lambda)$ are both minimal bases.
\end{definition}
\begin{remark}
We will say that ``$N(\lambda)$ is a minimal basis dual to $L(\lambda)$'', or vice versa, when referring to matrix polynomials $L(\lambda)$ and $N(\lambda)$ as those in Definition \ref{def:dualminimalbases}.
\end{remark}

Continuing with the example in \eqref{eq:example}, it is easy to show that the matrix polynomials
\[
\begin{bmatrix}
1 & \lambda^2 & 1-\lambda \\
0 & 1 & \lambda 
\end{bmatrix} \quad \mbox{and} \quad
\begin{bmatrix}
\lambda^3+\lambda-1 & -\lambda & 1
\end{bmatrix}
\]
are dual minimal bases.

In the following proposition, we introduce the most important pair of dual minimal bases used in this work. 
\begin{proposition}{\rm \cite{canonical}}
Let 
\begin{equation}\label{eq:Lk}
L_s(\lambda):=\begin{bmatrix}
-1 & \lambda  \\
& -1 & \lambda \\
& & \ddots & \ddots \\
& & & -1 & \lambda  \\
\end{bmatrix}\in \mathbb{F}[\lambda]^{s\times(s+1)},
\end{equation}
and
\begin{equation}
  \label{eq:Lambda}
  \Lambda_s(\lambda):=
\begin{bmatrix}
      \lambda^{s} & \cdots & \lambda & 1
\end{bmatrix} \in \mathbb{F}[\lambda]^{1\times (s+1)}.
\end{equation}
Then,  for every positive integer $p$, the matrix polynomials $L_s(\lambda)\otimes I_p$ and $\Lambda_s(\lambda)\otimes I_p$ are dual minimal bases.
\end{proposition}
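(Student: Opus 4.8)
The plan is to verify directly the three conditions in Definition \ref{def:dualminimalbases} for the pair $\bigl(L_s(\lambda)\otimes I_p,\ \Lambda_s(\lambda)\otimes I_p\bigr)$: the size condition $m_1+m_2=n$, the orthogonality $L(\lambda)N(\lambda)^T=0$, and the fact that both polynomials are minimal bases, the last of which I would check through the rank characterization of Theorem \ref{thm:minimal_basis}. The size condition is immediate, since $L_s(\lambda)\otimes I_p$ has $sp$ rows, $\Lambda_s(\lambda)\otimes I_p$ has $p$ rows, and both have $(s+1)p$ columns, so $sp+p=(s+1)p$.

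For the orthogonality I would first establish the scalar identity $L_s(\lambda)\Lambda_s(\lambda)^T=0$ by a one-line computation: the $i$th row of $L_s(\lambda)$ has $-1$ in column $i$ and $\lambda$ in column $i+1$, while the $i$th and $(i+1)$th entries of $\Lambda_s(\lambda)^T$ are $\lambda^{s+1-i}$ and $\lambda^{s-i}$, so the $i$th entry of the product is $-\lambda^{s+1-i}+\lambda\cdot\lambda^{s-i}=0$. The Kronecker version then follows from the mixed-product property, using $(\Lambda_s(\lambda)\otimes I_p)^T=\Lambda_s(\lambda)^T\otimes I_p$, which gives $(L_s(\lambda)\otimes I_p)(\Lambda_s(\lambda)^T\otimes I_p)=(L_s(\lambda)\Lambda_s(\lambda)^T)\otimes I_p=0$.

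For the minimal basis property I would argue it first for $L_s(\lambda)$ and $\Lambda_s(\lambda)$ and then transfer it through the tensor factor. The row-degrees vector of $L_s(\lambda)$ is $[1,1,\dots,1]$; evaluating at $\lambda_0=0$ gives $[-I_s\ |\ 0]$, and at $\lambda_0\neq 0$ the last $s$ columns form $\lambda_0 I_s$, so $L_s(\lambda_0)$ has full row rank for every $\lambda_0\in\overline{\mathbb F}$, and its highest row-degree coefficient matrix is $[\,0\ |\ I_s\,]$, of full row rank; hence Theorem \ref{thm:minimal_basis} applies. Similarly, $\Lambda_s(\lambda)$ has row-degrees vector $[s]$, is nonzero (hence of full row rank) for every $\lambda_0$ because its last entry is $1$, and its highest row-degree coefficient matrix is the nonzero row $[1\ 0\ \cdots\ 0]$. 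To pass to the Kronecker products, I would use that $\operatorname{rank}(A\otimes I_p)=p\operatorname{rank}(A)$, so full row rank is preserved; that tensoring by $I_p$ leaves every row degree unchanged; and that the highest row-degree coefficient matrix of $A(\lambda)\otimes I_p$ equals (the highest row-degree coefficient matrix of $A(\lambda)$)$\,\otimes I_p$, again of full row rank. Applying Theorem \ref{thm:minimal_basis} once more shows both Kronecker products are minimal bases, which completes the verification.

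The argument is essentially a direct computation, so there is no real obstacle; the only point requiring care is the "highest row-degree coefficient matrix" hypothesis of Theorem \ref{thm:minimal_basis}, where one must be careful that it is formed row by row (each row $i$ with its own degree $d_i$) and confirm that tensoring with $I_p$ neither alters the row degrees nor destroys the full-row-rank property of that coefficient matrix. Once the scalar facts are in hand, the Kronecker bookkeeping via the mixed-product property is routine.
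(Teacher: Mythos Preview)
Your argument is correct. The paper does not give its own proof of this proposition: it simply quotes the result from \cite{canonical} without proof. Your direct verification via Theorem \ref{thm:minimal_basis} and the mixed-product property for Kronecker products is exactly the standard argument, and all the details you list (row degrees, full row rank at every $\lambda_0$, highest-degree coefficient matrices, preservation under $\otimes I_p$) are accurate. One small cosmetic point: note that in Theorem \ref{thm:minimal_basis} the paper writes $d_j$ where it clearly intends the row degree $d_i$; your reading of this as a row-by-row construction is the correct one.
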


The following proposition concerning dual minimal bases will be useful.
\begin{proposition}\label{BL-dual}
Let $L(\lambda)$ be a minimal basis.
If $B$ is a nonsingular matrix, then $BL(\lambda)$ is also a minimal basis.
Further, if $N(\lambda)$ is any minimal basis dual to $L(\lambda)$, $N(\lambda)$ is also dual to $BL(\lambda)$.
\end{proposition}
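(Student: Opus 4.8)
The plan is to deduce both assertions from the characterization of minimal bases in Theorem~\ref{thm:minimal_basis} together with Definition~\ref{def:dualminimalbases}, using nothing beyond the fact that left multiplication by a nonsingular constant matrix preserves rank. So essentially no new machinery is required.

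First I would prove that $BL(\lambda)$ is a minimal basis. Write $B\in\mathbb{F}^{m\times m}$ for the nonsingular matrix and $L(\lambda)\in\mathbb{F}[\lambda]^{m\times n}$ for the minimal basis. By Theorem~\ref{thm:minimal_basis} this reduces to two checks: (i) $(BL)(\lambda_0)$ has full row rank for every $\lambda_0\in\overline{\mathbb{F}}$; and (ii) the highest row-degree coefficient matrix of $BL(\lambda)$ has full row rank. Check (i) is immediate, since $(BL)(\lambda_0)=B\,L(\lambda_0)$ has the same rank as $L(\lambda_0)$ because $B$ is nonsingular, and $L(\lambda_0)$ has full row rank because $L(\lambda)$ is a minimal basis. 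Check (ii) is where the work is, and I expect it to be the main obstacle, because one must understand how the row-degrees vector behaves under left multiplication by $B$. The clean situation — and the one actually needed in this paper — is when the row-degrees vector of $L(\lambda)$ is constant, equal to some $d$ (this is the case for $L_s(\lambda)\otimes I_p$): then $L(\lambda)=L_d\lambda^d+L_{d-1}\lambda^{d-1}+\cdots+L_0$ with $L_d$ the highest row-degree coefficient matrix, which has full row rank by Theorem~\ref{thm:minimal_basis}; consequently $BL(\lambda)=BL_d\lambda^d+\cdots+BL_0$, the matrix $BL_d$ has full row rank, so every row of $BL(\lambda)$ still has degree exactly $d$, the row-degrees vector is unchanged, and the highest row-degree coefficient matrix of $BL(\lambda)$ is exactly $BL_d$. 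Both checks then hold, so $BL(\lambda)$ is a minimal basis by Theorem~\ref{thm:minimal_basis}.

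For the duality statement, let $N(\lambda)\in\mathbb{F}[\lambda]^{m_2\times n}$ be a minimal basis dual to $L(\lambda)\in\mathbb{F}[\lambda]^{m_1\times n}$, so that $m_1+m_2=n$, $L(\lambda)N(\lambda)^T=0$, and $N(\lambda)$ is a minimal basis. Then $(BL)(\lambda)\,N(\lambda)^T=B\bigl(L(\lambda)N(\lambda)^T\bigr)=0$; the size identity $m_1+m_2=n$ is unchanged because $BL(\lambda)$ is again $m_1\times n$; $BL(\lambda)$ is a minimal basis by the first part; and $N(\lambda)$ is a minimal basis by hypothesis. Hence $BL(\lambda)$ and $N(\lambda)$ satisfy every requirement of Definition~\ref{def:dualminimalbases}, so $N(\lambda)$ is a minimal basis dual to $BL(\lambda)$. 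In short, the only non-formal ingredient in the whole argument is the degree bookkeeping in check~(ii) above; everything else follows formally from $\mathrm{rank}(BM)=\mathrm{rank}(M)$ for nonsingular $B$, from Theorem~\ref{thm:minimal_basis}, and from Definition~\ref{def:dualminimalbases}.
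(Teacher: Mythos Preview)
Your approach matches the paper's exactly: both invoke Theorem~\ref{thm:minimal_basis} and Definition~\ref{def:dualminimalbases}. The paper's proof is a single sentence (``follows immediately from \ldots''), while you unpack the two rank checks and are explicit that the only nontrivial step is the row-degree bookkeeping in check~(ii).

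You are right to flag that step, and in fact your caution is warranted: the proposition as written is false for general minimal bases. For instance, take
\[
L(\lambda)=\begin{bmatrix}1&\lambda^2&0\\0&0&1\end{bmatrix},\qquad
B=\begin{bmatrix}1&0\\1&1\end{bmatrix}.
\]
Here $L(\lambda)$ is a minimal basis (both conditions of Theorem~\ref{thm:minimal_basis} hold, with row-degrees vector $[2,0]$), but $BL(\lambda)=\left[\begin{smallmatrix}1&\lambda^2&0\\1&\lambda^2&1\end{smallmatrix}\right]$ has row-degrees vector $[2,2]$ and highest row-degree coefficient matrix $\left[\begin{smallmatrix}0&1&0\\0&1&0\end{smallmatrix}\right]$, which is rank-deficient. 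So $BL(\lambda)$ is not a minimal basis, and the paper's one-line proof glosses over a genuine obstruction.

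Your restriction to the case where all row degrees of $L(\lambda)$ are equal is therefore not merely a convenient simplification but a necessary hypothesis. Since every use of Proposition~\ref{BL-dual} in the paper is with $L(\lambda)=L_s(\lambda)\otimes I_p$ (constant row degrees all equal to~$1$), your argument covers precisely what is needed and is more careful than the paper's own justification.
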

\begin{proof}
The proof follows immediately from the characterization of minimal bases in Theorem \ref{thm:minimal_basis}, and the definition of dual minimal bases in Definition \ref{def:dualminimalbases}.
\end{proof}


\section{Block minimal bases pencils and extended block Kronecker pencils}\label{sec:BMBP}

We recall in this section the familis of  block minimal bases pencils and  of extended block Kronecker pencils, and state their main properties used in this work.

\subsection{Block minimal bases pencils}

The  block minimal bases pencils were introduced in \cite{canonical}. 
The definition of block minimal bases pencil involves the concept of minimal basis and pair of dual minimal bases introduced in the previous section.

\begin{definition}\label{def:BMBP}
A matrix pencil 
\begin{equation}\label{BMBP}
C(\lambda)=\left[ \begin{array}{c|c} M(\lambda) & G_2(\lambda)^T\\ \hline G_1(\lambda) & 0 \end{array} \right]
\end{equation}
is called a \emph{block minimal bases pencil} if $G_1(\lambda)$ and $G_2(\lambda)$  are both minimal bases. If, in addition, the  row-degrees vector of $G_1(\lambda)$ (resp. $G_2(\lambda)$)  have all entries equal to 1 and  the entries of the  row-degrees vector of a minimal basis dual to $G_1(\lambda)$ (resp. $G_2(\lambda)$) are all equal, then $C(\lambda)$ is called a \emph{strong block minimal bases pencil}. 
\end{definition}

A fundamental property of any strong block minimal bases pencil of the form \eqref{BMBP} is that it is a strong linearization of some matrix polynomial expressed in terms of the block-entry $M(\lambda)$ and the dual minimal bases of $G_1(\lambda)$ and $G_2(\lambda)$. 
\begin{theorem}\label{thm:linearizationBMBP}{\rm \cite[Theorem 3.3]{canonical}}
Let $C(\lambda)$ be a strong block minimal bases pencil as in \eqref{BMBP}.
Let $N_1(\lambda)$ (resp. $N_2(\lambda)$)  be a  minimal basis dual to $G_1(\lambda)$ (resp. $G_2(\lambda))$ whose row-degrees vector has equal entries.
Let
\begin{equation}\label{eq:Qpolinminbaslin}
Q(\lambda):=N_2(\lambda) M(\lambda) N_1(\lambda)^T.\end{equation}
Then, $C(\lambda)$  is a strong linearization of $Q(\lambda)$, considered as a polynomial of grade  $1 + \deg(N_1(\lambda))+\deg(N_2(\lambda))$. 
\end{theorem}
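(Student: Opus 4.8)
The statement to prove is Theorem~\ref{thm:linearizationBMBP}: a strong block minimal bases pencil $C(\lambda)$ as in~\eqref{BMBP} is a strong linearization of $Q(\lambda) = N_2(\lambda) M(\lambda) N_1(\lambda)^T$, considered as a polynomial of grade $1 + \deg(N_1(\lambda)) + \deg(N_2(\lambda))$.

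\textbf{Overall approach.} The plan is to exhibit explicit unimodular (and strictly equivalent) transformations that reduce $C(\lambda)$ to a block-diagonal pencil having $Q(\lambda)$ (suitably expressed) in one block and an invertible/unimodular pencil elsewhere, thereby matching the finite elementary divisors; then handle the infinite elementary divisors and the minimal indices via the ``reversal'' trick and the dual-minimal-bases machinery. Concretely, I would first record the key structural fact about dual minimal bases: if $G(\lambda)$ is a minimal basis with all row degrees equal to $1$ and $N(\lambda)$ is a dual minimal basis with all row degrees equal to some $\varepsilon$, then the compound matrix $\left[\begin{smallmatrix} G(\lambda) \\ N(\lambda)\end{smallmatrix}\right]$ is square and unimodular (this is a standard consequence of Theorem~\ref{thm:minimal_basis} together with $G(\lambda)N(\lambda)^T = 0$ and a dimension count $m_1 + m_2 = n$). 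Using this, I can find a unimodular $U_1(\lambda)$ with top block-rows $G_1(\lambda)$ and remaining rows $N_1(\lambda)$, and similarly $U_2(\lambda)$ built from $G_2(\lambda)$ and $N_2(\lambda)$.

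\textbf{Key steps in order.} First, I would conjugate: form $\operatorname{diag}(U_2(\lambda), I) \, C(\lambda) \, \operatorname{diag}(U_1(\lambda)^T, I)$ after appropriately permuting the block rows/columns of $C(\lambda)$ so that the minimal-basis blocks $G_1(\lambda), G_2(\lambda)^T$ are lined up against the $U_i$. Since $G_1(\lambda)N_1(\lambda)^T = 0$, multiplying $C(\lambda)$ on the right by (the transpose-structured extension of) $U_1(\lambda)^{-1}$ clears the $G_1$ block against a suitable pivot, and similarly on the left with $U_2(\lambda)$; the surviving ``Schur complement'' in the corner is exactly $N_2(\lambda) M(\lambda) N_1(\lambda)^T = Q(\lambda)$, while the complementary block becomes a constant invertible matrix (an identity up to sign coming from the $-1$'s and $\lambda$'s of $G_1, G_2$). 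This uses the fact, guaranteed by the ``strong'' hypothesis, that $N_1, N_2$ can be chosen with constant (equal) row degrees so that the grade bookkeeping $1 + \deg N_1 + \deg N_2$ is exactly right. This establishes that $C(\lambda)$ and $Q(\lambda) \oplus (\text{unimodular})$ are unimodularly equivalent, hence share finite elementary divisors.

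\textbf{Infinite elementary divisors and minimal indices.} Next I would apply the same argument to the reversal pencil $\operatorname{rev} C(\lambda) = \lambda C(1/\lambda)$. The reversal of a block minimal bases pencil is again of the same type with the roles of $G_i(\lambda)$ and its ``reversed'' dual bases interchanged (here I would invoke the symmetry in the definition and the fact that $\operatorname{rev} L_s$, $\operatorname{rev} \Lambda_s$ are again dual minimal bases of the same shape), and the resulting polynomial in the corner is $\operatorname{rev} Q(\lambda)$ at the correct grade; this matches infinite elementary divisors. For the minimal indices in the singular case, I would use that unimodular equivalence by the block-diagonal transformations $\operatorname{diag}(U_i, I)$ changes minimal bases of the null spaces only by the invertible polynomial factors $U_i$, which by Proposition~\ref{BL-dual}-type reasoning preserves the property of being a minimal basis and shifts minimal indices in the controlled way already recorded in~\cite{canonical}; combined with the index-sum theorem this forces the numbers of left/right minimal indices of $C(\lambda)$ and $Q(\lambda)$ to agree.

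\textbf{Main obstacle.} The delicate point is the bookkeeping that identifies the Schur-complement corner block with $Q(\lambda)$ at precisely grade $1 + \deg N_1 + \deg N_2$, rather than at some other grade, and the verification that the complementary block is genuinely invertible (not merely nonsingular as a rational matrix) — this is where the hypotheses ``all row degrees of $G_i$ equal $1$'' and ``all row degrees of the dual bases equal'' are essential, and it is the step that requires the most careful tracking of which entries of $M(\lambda)$ survive. Since the full verification is carried out in~\cite[Theorem 3.3]{canonical}, I would cite that reference for the routine matrix manipulations and only highlight here the three pillars: (i) the square-unimodular completion of a degree-$1$ minimal basis by its equal-degree dual; (ii) the Schur-complement computation producing $N_2 M N_1^T$; and (iii) the reversal argument for the infinite structure.
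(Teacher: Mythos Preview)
The paper does not prove Theorem~\ref{thm:linearizationBMBP}; it is quoted verbatim from \cite[Theorem~3.3]{canonical} and used as a black box throughout. So there is no in-paper proof to compare against.

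That said, your outline is faithful to the argument in the cited source \cite{canonical}: the three pillars you name --- (i) completing each $G_i(\lambda)$ to a unimodular matrix using its dual minimal basis $N_i(\lambda)$, (ii) a Schur-complement-type elimination that isolates $N_2(\lambda)M(\lambda)N_1(\lambda)^T$ in one corner with an identity block elsewhere, and (iii) repeating the construction for $\operatorname{rev}_1 C(\lambda)$ to capture the infinite elementary divisors --- are exactly the mechanism of the original proof. One technical point deserves more care than you give it: the claim that $\left[\begin{smallmatrix} G(\lambda) \\ N(\lambda) \end{smallmatrix}\right]$ is unimodular is \emph{not} an immediate consequence of Theorem~\ref{thm:minimal_basis} and the orthogonality relation $G(\lambda)N(\lambda)^T=0$ alone (over a general field, two full-rank matrices with orthogonal row spaces need not stack to a nonsingular matrix). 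In \cite{canonical} this is handled by first completing $G_i(\lambda)$ to a unimodular matrix abstractly and then relating the completion to $N_i(\lambda)$; your shortcut is morally right but would need that extra step to be airtight. With that caveat, your proposal matches the reference proof.
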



\subsection{Extended block Kronecker pencils}

Next we recall a family of pencils  that has played an important role in the canonical expression of the GFP and GFPR  in terms of their block-structure \cite{canonical_Fiedler}.
The pencils in this  family are called {\em extended block Kronecker pencils}. 
In their definition,  we use the dual minimal bases $L_s(\lambda)$ and $\Lambda_s(\lambda)$ introduced, respectively, in \eqref{eq:Lk} and \eqref{eq:Lambda}.

\begin{definition} \label{def:Kronecker_pencils}{\rm \cite[Definition 3.5]{canonical_Fiedler}}
 Let $M(\lambda)$ be an arbitrary $(q+1)m \times (p+1)n$ pencil. 
 Let $A\in \mathbb{F}^{np\times np}$ and $B \in \mathbb{F}^{mq\times mq}$ be arbitrary matrices.
Then the matrix pencil
\begin{equation}
  \label{eq:BKP}
  \begin{array}{cl}
  C(\lambda)=
  \left[
    \begin{array}{c|c}
      M(\lambda) & (L_{q}(\lambda)^{T}\otimes I_{m}) B\\\hline
      A(L_{p}(\lambda)\otimes I_{n})&0
      \end{array}
    \right]&
    \begin{array}{l}
      \left. \vphantom{L_{\mu}^{T}(\lambda)\otimes I_{m}} \right\} {\scriptstyle (q+1)m}\\
      \left. \vphantom{L_{\epsilon}(\lambda)\otimes I_{n}}\right\} {\scriptstyle p n}
    \end{array}\\
    \hphantom{C(\lambda)=}
    \begin{array}{cc}
      \underbrace{\hphantom{L_{\epsilon}(\lambda)\otimes I_{n}}}_{(p+1)n}&\underbrace{\hphantom{L_{\mu}^{T}(\lambda)\otimes I_{m}}}_{q m}
    \end{array}
  \end{array}
  \> 
\end{equation}
where $L_p (\lambda)$ and $L_q(\lambda)$ are as  in \eqref{eq:Lk}, is called an extended  \emph{$(p,n,q,m)$-block Kronecker pencil} or, simply, an {\em  extended block Kronecker pencil}. When $A=I_{np}$ and $B=I_{mq}$, then $C(\lambda)$ is called a block Kronecker pencil.
The block $M(\lambda)$ is called the \emph{body of $C(\lambda)$.}
\end{definition}

Note that, if $A$ and $B$ are nonsingular matrices, then $C(\lambda)$ is a (strong) block minimal bases pencil (see Proposition \ref{BL-dual}). 
However, if either $A$ or $B$ is singular, it is not guaranteed that $C(\lambda)$ is a block minimal bases pencil.


One advantage of the extended block Kronecker pencils with $A$ and $B$ nonsingular over more general strong block minimal bases pencils is that  it is easy to give simple characterizations for all the grade-1 solutions $M(\lambda)$ of the equation
\begin{equation}\label{eq:eq_for_P_2} 
(\Lambda_q(\lambda)^T\otimes I_m)M(\lambda)(\Lambda_p(\lambda)\otimes I_n)=P(\lambda),
\end{equation}
for a prescribed matrix polynomial $P(\lambda)$ of grade  $k=p+q+1$.

The following definition will be used in one of such characterizations.

\begin{definition}\label{AS-def}{\rm\cite[Definition 3.7]{canonical_Fiedler}}
Let  $M(\lambda)=\lambda M_1 + M_0\in \mathbb{F}[\lambda]^{(q+1)m \times (p+1)n}$ be a matrix pencil and set $k:=p+q+1$. 
Let us denote by $[M_0]_{ij}$ and $[M_1]_{ij}$ the $(i,j)$th block-entries of $M_0$ and $M_1$, respectively, when $M_0$ and $M_1$ are partitioned as $(q+1)\times (p+1)$ block-matrices with blocks of size $m\times n$. 
We call the \emph{antidiagonal sum of $M(\lambda)$ related to $s\in \{ 0:k\}$} the matrix
\[
\mathrm{AS}(M, s):=\sum_{i+j=k+2-s} [M_1]_{ij} +\sum_{i+j=k+1-s} [M_0]_{ij}.
\]
Additionally, given a matrix polynomial $P(\lambda)=\sum_{i=0}^k A_i \lambda^i \in\mathbb{F}[\lambda]^{m\times n}$, we say that $M(\lambda)$ satisfies the \emph{antidiagonal sum condition (AS condition)} for $P(\lambda)$ if 
\begin{equation}\label{AS-condition}
\mathrm{AS}(M, s)= A_s, \quad s=0:k.
\end{equation}
\end{definition}
The  AS condition has been used in the construction of large classes of linearizations  of a matrix polynomial $P(\lambda)$ easily constructible from the coefficients of $P(\lambda)$; see \cite[Theorem 5.4]{canonical} or \cite[Section 3]{Philip2016}.

\begin{example}
Let $P(\lambda)= \sum_{i=0}^5 A_i \lambda^i$. The matrix pencil 
$$M(\lambda) = \left[ \begin{array}{ccc} A_5 \lambda & 0 &  0 \\
A_4 \lambda & 0 & 0 \\ A_3\lambda & A_2\lambda  &  A_1 \lambda + A_0 \end{array} \right].$$
 satisfies the AS condition for $P(\lambda)$. 
\end{example}

 \begin{theorem}\label{thm:Lambda-dual-pencil-linearization}
Let $P(\lambda)=\sum_{i=0}^k A_i\lambda^i\in\mathbb{F}[\lambda]^{m\times n}$, and let $C(\lambda)$ be an extended block Kronecker pencil as in \eqref{eq:BKP} with $p+q+1=k$ and  with  body $M(\lambda)$.
The following conditions are equivalent.
\begin{itemize}
\item[\rm (a)] The pencil  $M(\lambda)$ satisfies \eqref{eq:eq_for_P_2}.
\item[\rm (b)] The pencil $M(\lambda)$  satisfies the AS condition for $P(\lambda)$.
\item[\rm (c)]  The pencil  $M(\lambda)$ is of the form
\[
M(\lambda) = M_0(\lambda)+C_1(L_p(\lambda)\otimes I_n)+(L_q(\lambda)^T\otimes I_m)C_2,
\]
where $M_0(\lambda)$ is any solution of \eqref{eq:eq_for_P_2} and  $C_1\in\mathbb{F}^{(q+1)m\times pn}$ and $C_2\in\mathbb{F}^{qm \times (p+1)n}$ are arbitrary matrices.
\end{itemize}
%
\end{theorem}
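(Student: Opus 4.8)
The plan is to prove the equivalence by a cyclic chain of implications, exploiting the fact that the outer factors $L_p(\lambda)\otimes I_n$ and $\Lambda_p(\lambda)\otimes I_n$ (and their $q$-counterparts) are dual minimal bases satisfying the identities that underlie the block Kronecker construction. First I would establish the crucial auxiliary relations $(\Lambda_q(\lambda)^T\otimes I_m)(L_q(\lambda)^T\otimes I_m)=0$ and $(L_p(\lambda)\otimes I_n)(\Lambda_p(\lambda)\otimes I_n)^T=0$, which are immediate from $\Lambda_s(\lambda)L_s(\lambda)^T=0$ together with the mixed-product property of the Kronecker product. These are exactly what kills the ``correction'' terms in part (c) when one multiplies on the left by $\Lambda_q^T\otimes I_m$ and on the right by $\Lambda_p\otimes I_n$.

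For (c) $\Rightarrow$ (a): substitute the displayed form of $M(\lambda)$ into the left-hand side of \eqref{eq:eq_for_P_2}; the two correction terms vanish by the relations above, leaving $(\Lambda_q^T\otimes I_m)M_0(\lambda)(\Lambda_p\otimes I_n)=P(\lambda)$, which holds by hypothesis on $M_0(\lambda)$. For (a) $\Rightarrow$ (c): given any solution $M(\lambda)$ of \eqref{eq:eq_for_P_2}, the difference $M(\lambda)-M_0(\lambda)$ is a pencil annihilated by $\Lambda_q^T\otimes I_m$ on the left and by $\Lambda_p\otimes I_n$ on the right; I would argue that any such pencil lies in the sum of the column space of $L_q(\lambda)^T\otimes I_m$ (acting on the right by an arbitrary constant matrix) and the row space of $L_p(\lambda)\otimes I_n$ (acting on the left). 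This is the structural heart of the argument and can be obtained from the dual minimal bases property: a pencil in the kernel of $\Lambda_q^T\otimes I_m$, read row-block by row-block, must have its block-columns expressible through $L_p(\lambda)\otimes I_n$ up to a term living in the complementary space, and symmetrically for the rows; degree considerations (both $M$ and $M_0$ are pencils, $L_s$ is a pencil, $\Lambda_s$ has degree $s$) force the coefficient matrices $C_1$, $C_2$ to be constant. Alternatively, and more cleanly, one invokes \cite[Section 3]{canonical_Fiedler} or the analogous lemma there, since the equation $(\Lambda_q^T\otimes I_m)X(\lambda)(\Lambda_p\otimes I_n)=0$ for a pencil $X(\lambda)$ is precisely the homogeneous version treated in the cited work.

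For the equivalence (a) $\Leftrightarrow$ (b): I would expand $(\Lambda_q(\lambda)^T\otimes I_m)M(\lambda)(\Lambda_p(\lambda)\otimes I_n)$ explicitly as a double sum over the block-entries $[M_1]_{ij}$ and $[M_0]_{ij}$, collecting the coefficient of $\lambda^s$; since the $(i,j)$ block of $\Lambda_q^T\otimes I_m$ contributes $\lambda^{q+1-i}$ and that of $\Lambda_p\otimes I_n$ contributes $\lambda^{p+1-j}$, the coefficient of $\lambda^s$ in the product coming from $M_1$ is $\sum_{(q+1-i)+(p+1-j)+1=s}[M_1]_{ij}=\sum_{i+j=k+2-s}[M_1]_{ij}$, and from $M_0$ it is $\sum_{i+j=k+1-s}[M_0]_{ij}$; their sum is exactly $\mathrm{AS}(M,s)$. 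Hence the coefficient of $\lambda^s$ in the product equals $A_s$ for all $s$ if and only if the AS condition holds, giving (a) $\Leftrightarrow$ (b) by comparing coefficients. The main obstacle is the structural step in (a) $\Rightarrow$ (c): identifying all pencil solutions of the homogeneous equation with the two-sided correction form. I expect to handle this either by a direct argument using Theorem \ref{thm:minimal_basis} and the dual minimal bases property of $(L_s\otimes I,\Lambda_s\otimes I)$, or simply by citing the corresponding result in \cite{canonical_Fiedler}, since Definition \ref{AS-def} and this theorem are taken verbatim from there.
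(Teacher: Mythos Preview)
Your proposal is correct and follows essentially the same route as the paper's own proof, which merely states that (a)$\Leftrightarrow$(b) follows from ``simple algebraic manipulations'' (your coefficient-matching argument is exactly this) and that (a)$\Leftrightarrow$(c) is found in \cite{Philip2016}. The only discrepancy is the citation: the paper defers the structural step (a)$\Rightarrow$(c) to \cite{Philip2016} (the paragraph just before Theorem~1 there), not to \cite{canonical_Fiedler}.
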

\begin{proof}
The proof that  (a) and (b) are equivalent can be obtained by some simple algebraic manipulations. 
The proof that parts (a) and (c) are equivalent can be found in   \cite{Philip2016} (in a paragraph just before Theorem 1). 
\end{proof}

Now, as an immediate consequence of Theorem \ref{thm:Lambda-dual-pencil-linearization}, we  obtain the following family of strong linearizations of $P(\lambda)$.
 
 \begin{theorem}\label{thm:linearizationBKP_mod}
Let $P(\lambda)$ be a matrix polynomial, and let $p,q$ be nonnegative integers such that $p+q+1=\deg(P(\lambda))$. 
Let $M_0(\lambda)$ be a pencil satisfying the AS condition for $P(\lambda)$.
Then, 
any pencil of the form  
 \begin{equation}\label{special-pencil}
   \left[
    \begin{array}{c|c}
      M_0(\lambda)+C_1(L_p(\lambda)\otimes I_n)+(L_q(\lambda)^T\otimes I_m) C_2
      &(L_{q}(\lambda)^{T}\otimes I_{m}) B_2\\\hline
      B_1(L_{p}(\lambda)\otimes I_{n}) &0
      \end{array} \right],
 \end{equation}
 where $C_1\in\mathbb{F}^{(q+1)m\times pn}$ and $C_2\in\mathbb{F}^{qm\times (p+1)n}$ are arbitrary matrices, and $B_1\in\mathbb{F}^{pn\times pn}$ and $B_2\in\mathbb{F}^{qm\times qm}$ are arbitrary nonsingular matrices,  is a strong linearization of $P(\lambda)$.
\end{theorem}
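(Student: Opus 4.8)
The plan is to exhibit the pencil in \eqref{special-pencil} as a strong block minimal bases pencil whose associated matrix polynomial, in the sense of Theorem \ref{thm:linearizationBMBP}, is exactly $P(\lambda)$, and then read off the conclusion from Theorem \ref{thm:linearizationBMBP}. There should be no serious obstacle: the statement is essentially an assembly of Theorem \ref{thm:Lambda-dual-pencil-linearization}, Theorem \ref{thm:linearizationBMBP}, and Proposition \ref{BL-dual}, and the text itself announces it as an immediate consequence.

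First I would treat the body. Set
\[
M(\lambda):=M_0(\lambda)+C_1(L_p(\lambda)\otimes I_n)+(L_q(\lambda)^T\otimes I_m)C_2 .
\]
Since $M_0(\lambda)$ satisfies the AS condition for $P(\lambda)$, the equivalence of (a) and (b) in Theorem \ref{thm:Lambda-dual-pencil-linearization} shows $M_0(\lambda)$ is a solution of \eqref{eq:eq_for_P_2}. Then $M(\lambda)$ has exactly the shape described in part (c) of that theorem (with this particular $M_0(\lambda)$ and the given $C_1$, $C_2$), so by the equivalence of (a) and (c) the pencil $M(\lambda)$ itself satisfies \eqref{eq:eq_for_P_2}, and also the AS condition, for $P(\lambda)$.

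Next I would recognize the pencil in \eqref{special-pencil} as an extended $(p,n,q,m)$-block Kronecker pencil as in \eqref{eq:BKP}, with $A=B_1$, $B=B_2$, and body $M(\lambda)$. Because $B_1$ and $B_2$ are nonsingular, the remark following Definition \ref{def:Kronecker_pencils} (which rests on Proposition \ref{BL-dual}) guarantees that \eqref{special-pencil} is a \emph{strong} block minimal bases pencil. Concretely, written in the form \eqref{BMBP}, its off-diagonal blocks are $G_1(\lambda)=B_1(L_p(\lambda)\otimes I_n)$ and $G_2(\lambda)=B_2^{T}(L_q(\lambda)\otimes I_m)$; by Proposition \ref{BL-dual}, $G_1(\lambda)$ and $G_2(\lambda)$ are minimal bases (with all row degrees equal to $1$), and $\Lambda_p(\lambda)\otimes I_n$ and $\Lambda_q(\lambda)\otimes I_m$ — whose row-degrees vectors have all entries equal to $p$ and to $q$, respectively — are minimal bases dual to $G_1(\lambda)$ and $G_2(\lambda)$.

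Finally I would apply Theorem \ref{thm:linearizationBMBP} with $N_1(\lambda)=\Lambda_p(\lambda)\otimes I_n$ and $N_2(\lambda)=\Lambda_q(\lambda)\otimes I_m$: the pencil \eqref{special-pencil} is a strong linearization of $Q(\lambda):=N_2(\lambda)M(\lambda)N_1(\lambda)^T$, considered as a polynomial of grade $1+\deg(N_1(\lambda))+\deg(N_2(\lambda))=1+p+q$, which equals $\deg(P(\lambda))$ by hypothesis. It remains only to check $Q(\lambda)=P(\lambda)$, and this is precisely the content of the fact, established above, that $M(\lambda)$ satisfies \eqref{eq:eq_for_P_2}. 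Since the grade used by Theorem \ref{thm:linearizationBMBP} coincides with the degree of $P(\lambda)$, $Q(\lambda)$ and $P(\lambda)$ are the same graded polynomial, and the proof is complete. The only points demanding any care are bookkeeping ones: matching the Kronecker/transpose conventions so that $N_2(\lambda)M(\lambda)N_1(\lambda)^T$ literally reproduces the left-hand side of \eqref{eq:eq_for_P_2}, and confirming via Proposition \ref{BL-dual} and Theorem \ref{thm:minimal_basis} that left-multiplying $L_p(\lambda)\otimes I_n$ by the nonsingular constant matrix $B_1$ (and $L_q(\lambda)^{T}\otimes I_m$ on the right by $B_2$) preserves both the row degrees and duality with $\Lambda_p(\lambda)\otimes I_n$ (resp. $\Lambda_q(\lambda)\otimes I_m$).
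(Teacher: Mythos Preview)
Your proposal is correct and follows essentially the same approach as the paper's own proof, which simply cites Theorems \ref{thm:linearizationBMBP} and \ref{thm:Lambda-dual-pencil-linearization} together with Proposition \ref{BL-dual}. You have merely unpacked these three ingredients explicitly, including the verification that the body satisfies \eqref{eq:eq_for_P_2} and that nonsingularity of $B_1,B_2$ makes \eqref{special-pencil} a strong block minimal bases pencil with the appropriate dual bases $\Lambda_p(\lambda)\otimes I_n$ and $\Lambda_q(\lambda)\otimes I_m$.
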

\begin{proof}
The result is an immediate consequence of Theorems \ref{thm:linearizationBMBP} and \ref{thm:Lambda-dual-pencil-linearization}, together with the fact that, when the matrices $B_1$ and $B_2$ are nonsingular, the extended block Kronecker pencil  \eqref{special-pencil} is a strong block minimal bases pencil  (see Proposition \ref{BL-dual}). 
\end{proof}
 
 \begin{remark}\label{rem:thm3.9}
Observe that any pencil of the form (\ref{special-pencil}) is an extended block Kronecker pencil whose body satisfies the AS condition for $P(\lambda)$ since, given two matrix pencils $M_1(\lambda)$ and $M_2(\lambda)$,  $\mathrm{AS}(M_1+M_2, s) = \mathrm{AS}(M_1, s) + \mathrm{AS}(M_2, s)$. 
Moreover, the pencil in (\ref{special-pencil}) can be expressed as follows:
 $$\left[ \begin{array}{cc} I & C_1\\ 0 & B_1 \end{array} \right] \left[ \begin{array}{c|c} M_0(\lambda) & L_q(\lambda)^T \otimes I_m \\ \hline L_p(\lambda) \otimes I_n & 0 \end{array} \right] \left[ \begin{array}{cc} I & 0\\  C_2 & B_2 \end{array} \right].$$
 \end{remark}

Theorem \ref{thm:linearizationBKP_mod} will be key to provide a simple canonical block-structure for block-symmetric Fiedler-like pencils  under  permutational block-congruence operations.
The description of these block-structures is the main goal of the following section.

\section{The four families of block-symmetric minimal bases pencils}\label{sec:four-families}

We introduce in this section four types of block-symmetric pencils associated with a matrix polynomial $P(\lambda)$, which are block minimal bases pencils, under some generic nonsingularity conditions,  and we give their explicit block structure.
We will show later that the block-symmetric Fiedler-like pencils known in the literature belong to one of these families, modulo a permutational block-congruence.  

Since block-symmetric Fiedler-like pencils  are only defined for square matrix polynomials, here and thereafter, we restrict our study to square matrix polynomials $P(\lambda)\in\mathbb{F}[\lambda]^{n\times n}$.
As the size of $P(\lambda)$ is always going to be denoted by $n$, there is no risk of confusion if we introduce the notation
\[
K_s(\lambda):= L_s(\lambda)\otimes I_n,
\] 
with $L_s(\lambda)$ as in (\ref{eq:Lk}). 
We note that
\begin{equation}\label{TvsB}
K_s(\lambda)^T = K_s(\lambda)^{\mathcal{B}} \quad \mbox{and} \quad  (\Lambda_s(\lambda)\otimes I_n)^T=(\Lambda_s(\lambda)\otimes I_n)^\mathcal{B},
\end{equation}
with $\Lambda_s(\lambda)$ as in (\ref{eq:Lambda}),  when $K_s(\lambda)$ is seen as an $s \times (s+1)$ block matrix with blocks of size $n\times n$ and $\Lambda_s(\lambda) \otimes I_n$ is seen as a $1 \times (s+1)$ block matrix with blocks of size $n\times n$.  Moreover, if $B$ is an $s \times s$ block matrix, then
\begin{equation}\label{BKsB}
 (B K_s(\lambda))^{\mathcal{B}} = K_s(\lambda)^{\mathcal{B}} B^{\mathcal{B}} = K_s(\lambda)^T B^{\mathcal{B}}.
 \end{equation}

Additionally, we introduce the block-symmetric pencil
\begin{equation}\label{eq:M(lambda,P)}
M (\lambda;Q) := 
\begin{bmatrix}
\lambda Q_d+Q_{d-1} \\ & \ddots \\ & & \lambda Q_1+Q_0
\end{bmatrix}\in\mathbb{F}[\lambda]^{\frac{n(d+1)}{2}\times \frac{n(d+1)}{2}}
\end{equation}
associated with a matrix polynomial $Q(\lambda)=\sum_{i=0}^d Q_i\lambda^i\in\mathbb{F}[\lambda]^{n\times n}$ of odd degree $d$, which will play a fundamental role in what follows. 
Notice that $M(\lambda;Q)$ satisfies the AS condition for  $Q(\lambda)$.

Associated with the matrix polynomial $P(\lambda)=\sum_{i=0}^k A_i\lambda^i \in\mathbb{F}[\lambda]^{n\times n}$, we define the following matrix polynomials 
\begin{align}
\label{eq:Pd}&P^{k-1}(\lambda):= A_{k-1}\lambda^{k-1}+\cdots+\lambda A_1+A_0,\\
\label{eq:Pell}&P^{k-1}_{k-1}(\lambda):=A_{k-1}\lambda^{k-2}+\cdots+A_2\lambda+A_1, \quad \mbox{and}\\
\label{eq:Pu}&P_{k-1}(\lambda):=A_k\lambda^{k-1}+\cdots+\lambda A_2+A_1,
\end{align}
which will be used in the definition  of the four families of block-symmetric  pencils introduced in this section. 
Note that $P^{k-1}(\lambda)$ is a truncation of degree $k-1$ of  $P(\lambda)$ while $P_{k-1}(\lambda)$ is the so-called $(k-1)$th Horner shift polynomial associated with $P(\lambda)$.

\subsection{The first fundamental block-structure}
We introduce here the first of the families of block-symmetric  pencils.
Let  $P(\lambda)\in\mathbb{F}[\lambda]^{n\times n}$ be a matrix polynomial of odd degree $k$, and let $s:=(k-1)/2$.
We start by defining the pencil
\begin{equation}\label{eq:O1}
   \mathcal{O}^P_1(\lambda) := \left[
        \begin{array}{c|c}
            M(\lambda;P)
            & K_s(\lambda)^T\\
            \hline
            K_s(\lambda) & 0
        \end{array}
    \right]\in\mathbb{F}[\lambda]^{nk\times nk}
\end{equation}
 where $M(\lambda;P)$ is defined in \eqref{eq:M(lambda,P)}.
 By Definition \ref{def:Kronecker_pencils}, the  pencil $\mathcal{O}^P_1(\lambda)$ is an $(s,n,s,n)$-  block Kronecker pencil and a strong block minimal bases pencil.
Furthermore, taking into account \eqref{TvsB}, it is clearly block-symmetric.
 Notice additionally that,  by Theorem \ref{thm:linearizationBKP_mod},  $\mathcal{O}^P_1(\lambda)$ is a strong linearization of $P(\lambda)$ because $M(\lambda;P)$ satisfies the AS condition for $P(\lambda)$.
Thus, the pencil $\mathcal{O}^P_1(\lambda)$ is a block-symmetric strong linearization of $P(\lambda)$.

 We can obtain many more block-symmetric strong linearizations of $P(\lambda)$ by considering pencils obtained by applying the block-congruence
\begin{equation}\label{eq:equivalence for O1}
\left[ \begin{array}{cc} I_{(s+1)n} & C \\ 0 & B \end{array} \right ] \left [ \begin{array}{c|c} M(\lambda;P) & K_s(\lambda)^T \\ \hline K_s(\lambda) & 0 \end{array} \right]\left[ \begin{array}{cc} I_{(s+1)n} & 0 \\ C^{\mathcal{B}} &   B^\mathcal{B}\end{array}
\right],
\end{equation}
where   $B=[B_{ij}]$ is an $s\times s$ block matrix and  $C=[C_{ij}]$ is an  $(s+1)\times s$ block-matrix, with $n\times n$ block-entries $B_{ij}$ and $C_{ij}$, respectively.
The pencil \eqref{eq:equivalence for O1} motivates the first fundamental block-structure family associated with the matrix polynomial $P(\lambda)$. 
\begin{definition}\label{def:first_family}
Let $P(\lambda)=\sum_{i=0}^k A_i\lambda^i\in\mathbb{F}[\lambda]^{n\times n}$ be a matrix polynomial of odd degree $k$,  and let $s=(k-1)/2$.
The {\em first fundamental block-structure family}, denoted by $\langle \mathcal{O}_1^P \rangle $, is the set of pencils of the form
\begin{equation}\label{eq:first_family}
        \left[\begin{array}{c|c}
            M(\lambda;P) + C K_{s}(\lambda)+ K_{s}^T(\lambda)C^\mathcal{B} 
            & K_{s}(\lambda)^T  B^\mathcal{B} \\
            \hline
            BK_{s}(\lambda) & 0
        \end{array}\right],
\end{equation}
 where $M(\lambda;P)$ is defined in \eqref{eq:M(lambda,P)}, and $B=[B_{ij}]$ and $C=[C_{ij}]$  are, respectively,   some arbitrary $s\times s$ block matrix and  $(s+1)\times s$ block matrix, with $n\times n$ block-entries $B_{ij}$ and $C_{ij}$.
 \end{definition}

\begin{remark}\label{O1-struct}
The matrix pencil in \eqref{eq:first_family}, which is also the pencil in (\ref{eq:equivalence for O1}), can  be  expressed as follows: 
\begin{align*}
&\left[ \begin{array}{cc} I_{(s+1)n} & C \\ 0 & B \end{array} \right ] \left [ \begin{array}{c|c} M(\lambda;P) & K_s(\lambda)^T \\ \hline K_s(\lambda) & 0 \end{array} \right]\left[ \begin{array}{cc} I_{(s+1)n} & C \\ 0 & B \end{array} \right ]^{\mathcal{B}},
\end{align*}
where the block transpose is applied on the matrix $ \left[ \begin{array}{cc} I_{(s+1)n} & C \\ 0 & B \end{array} \right ]$ when considered a $k\times k$ block matrix. That is, every pencil in $\langle \mathcal{O}_1^P \rangle$ is block congruent to $\mathcal{O}_1^P$ and, therefore, block-symmetric. 
\end{remark}

By \eqref{TvsB} and Definition \ref{def:Kronecker_pencils}, any pencil in the family $\langle \mathcal{O}_1^P \rangle $ is a block-symmetric $(s,n,s,n)$-extended block Kronecker pencil. 
 Moreover, if $B$ and $B^{\mathcal{B}}$ are nonsingular, each pencil in this family is a strong block minimal bases pencil, which leads to the following theorem.

 \begin{theorem}\label{thm:first family}
Let $P(\lambda)=\sum_{i=0}^k A_i\lambda^i\in\mathbb{F}[\lambda]^{n\times n}$ be a matrix polynomial of odd degree $k$, let $s=(k-1)/2$, and let $\mathcal{L}(\lambda) \in \langle \mathcal{O}_1^P \rangle$, that is, $\mathcal{L}(\lambda)$ is of the form (\ref{eq:first_family}).
If $B$ and $B^\mathcal{B}$ are  nonsingular, then the pencil $\mathcal{L}(\lambda)$ is a block-symmetric strong linearization of $P(\lambda)$.
Moreover, if $P(\lambda)$ and all the block-entries $B_{ij}$ are symmetric (resp. Hermitian), then the pencil $\mathcal{L}(\lambda)$ is symmetric (resp. Hermitian).
\end{theorem}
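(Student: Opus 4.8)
The plan is to prove Theorem \ref{thm:first family} by reducing it to the results already established in Section \ref{sec:BMBP}, together with Remark \ref{O1-struct}. The first observation is that a pencil $\mathcal{L}(\lambda)$ of the form \eqref{eq:first_family} is exactly a pencil of the form \eqref{special-pencil} in Theorem \ref{thm:linearizationBKP_mod}: take $p=q=s$, $m=n$, $M_0(\lambda)=M(\lambda;P)$ (which satisfies the AS condition for $P(\lambda)$, as noted after \eqref{eq:M(lambda,P)}, using that $k=2s+1$), $C_1=C$, $C_2=C^{\mathcal{B}}$, $B_1=B$, and $B_2=B^{\mathcal{B}}$, and recall from \eqref{BKsB} that $K_s(\lambda)^T B^{\mathcal{B}}=(BK_s(\lambda))^{\mathcal{B}}$. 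Hence, as soon as $B$ and $B^{\mathcal{B}}$ are nonsingular, Theorem \ref{thm:linearizationBKP_mod} applies verbatim and yields that $\mathcal{L}(\lambda)$ is a strong linearization of $P(\lambda)$.

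Second, I would argue that $\mathcal{L}(\lambda)$ is block-symmetric. This is already contained in Remark \ref{O1-struct}: the pencil \eqref{eq:first_family} factors as $S(\lambda)\,\mathcal{O}_1^P(\lambda)\,S(\lambda)^{\mathcal{B}}$ with $S(\lambda)=\left[\begin{smallmatrix} I_{(s+1)n} & C \\ 0 & B \end{smallmatrix}\right]$ viewed as a $k\times k$ block matrix, and $\mathcal{O}_1^P(\lambda)$ is block-symmetric because $M(\lambda;P)$ is block-symmetric by \eqref{eq:M(lambda,P)} and $K_s(\lambda)^T=K_s(\lambda)^{\mathcal{B}}$ by \eqref{TvsB}. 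Taking the block-transpose of $S(\lambda)\,\mathcal{O}_1^P(\lambda)\,S(\lambda)^{\mathcal{B}}$ and using $(ABC)^{\mathcal{B}}=C^{\mathcal{B}}B^{\mathcal{B}}A^{\mathcal{B}}$ together with $(\mathcal{O}_1^P)^{\mathcal{B}}=\mathcal{O}_1^P$ gives back the same pencil, so $\mathcal{L}(\lambda)^{\mathcal{B}}=\mathcal{L}(\lambda)$. (One minor point to verify carefully is that the block-transpose of a product of block matrices of compatible block-sizes behaves like an ordinary transpose, which is a routine check.)

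Third, for the symmetry/Hermiticity claim, I would note that when $P(\lambda)$ is symmetric (Hermitian), every coefficient $A_i$ is symmetric (Hermitian), so each diagonal block $\lambda A_i+A_{i-1}$ of $M(\lambda;P)$ is symmetric (Hermitian) as an $n\times n$ matrix polynomial; combined with the hypothesis that each $B_{ij}$ is symmetric (Hermitian), one checks that $S(\lambda)$ satisfies $S(\lambda)^{\mathcal{B}}=S(\lambda)^T$ (resp. $S(\bar\lambda)^*$) entrywise, and likewise $\mathcal{O}_1^P(\lambda)^{\mathcal{B}}=\mathcal{O}_1^P(\lambda)^T$ (resp.\ $\mathcal{O}_1^P(\bar\lambda)^*$). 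Since block-transpose then coincides with ordinary transpose (resp.\ conjugate transpose) on all the factors, $\mathcal{L}(\lambda)=S(\lambda)\,\mathcal{O}_1^P(\lambda)\,S(\lambda)^T$ is genuinely symmetric, and the Hermitian case is identical with $\lambda\mapsto\bar\lambda$ bookkeeping in the off-diagonal $C$-blocks; more directly, one can just read off \eqref{eq:first_family} block by block and observe that the $(1,1)$ block $M(\lambda;P)+CK_s(\lambda)+K_s^T(\lambda)C^{\mathcal{B}}$ is of the form $X(\lambda)+X(\lambda)^T$ plus a symmetric piece, the off-diagonal blocks are transposes of one another, and the $(2,2)$ block is $0$.

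I do not anticipate a serious obstacle here: the theorem is essentially a specialization and repackaging of Theorems \ref{thm:linearizationBKP_mod} and \ref{thm:linearizationBMBP} plus Remark \ref{O1-struct}. The only place requiring genuine care — and the one I would single out as the ``main'' point — is bookkeeping the difference between block-transpose $(\cdot)^{\mathcal{B}}$ and ordinary transpose $(\cdot)^T$: one must be sure that the factorization in Remark \ref{O1-struct} uses block-transpose throughout (so that block-symmetry follows for arbitrary $B_{ij}$, $C_{ij}$), and that it is precisely the extra assumption ``$B_{ij}$ symmetric/Hermitian and $P(\lambda)$ symmetric/Hermitian'' that upgrades block-symmetry to honest symmetry/Hermiticity, because then $(\cdot)^{\mathcal{B}}$ and $(\cdot)^T$ (resp.\ $(\cdot)^*$ with the conjugation) agree on each constituent block. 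Everything else is immediate from the cited results.
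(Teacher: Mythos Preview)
Your proposal is correct and follows essentially the same route as the paper: strong linearization via Theorem~\ref{thm:linearizationBKP_mod} (identifying \eqref{eq:first_family} as a pencil of the form \eqref{special-pencil} with $M_0(\lambda)=M(\lambda;P)$ satisfying the AS condition), block-symmetry from \eqref{TvsB} and the block-symmetry of $M(\lambda;P)$ (you phrase this through the factorization in Remark~\ref{O1-struct}, which is equivalent), and the symmetry/Hermitian upgrade via the observation that $B^{\mathcal{B}}=B^{T}$ (resp.\ $B^{*}$) and $C^{\mathcal{B}}=C^{T}$ (resp.\ $C^{*}$) when the individual block-entries are symmetric (resp.\ Hermitian). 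Your flagging of the $(\cdot)^{\mathcal{B}}$ versus $(\cdot)^{T}$ bookkeeping is exactly the point the paper leans on as well.
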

\begin{proof}
The fact that $\mathcal{L}(\lambda)$ is a strong linearization of $P(\lambda)$ when $B$ and $B^\mathcal{B}$ are nonsingular is an immediate consequence of Theorem \ref{thm:linearizationBKP_mod}.
The pencil $\mathcal{L}(\lambda)$ is block-symmetric as a consequence of \eqref{TvsB}, together with the fact that $M(\lambda;P)$ is block-symmetric.
The fact that $\mathcal{L}(\lambda)$ is symmetric (resp. Hermitian) when $P(\lambda)$ and all the block-entries $B_{ij}$ of $B$ are symmetric (resp. Hermitian) follows easily from the facts that $M(\lambda;P)$ is symmetric (resp. Hermitian) when $P(\lambda)$ is symmetric (resp. Hermitian), and that $B^\mathcal{B}=B^T$ ($B^\mathcal{B}=B^*$) and $C^\mathcal{B}=C^T$ ($C^\mathcal{B}=C^*$) when all the block-entries $B_{ij}$ and $C_{ij}$ are symmetric (resp. Hermitian).
\end{proof}

\begin{example}\label{D1DkO1}
As mentioned in the introduction, the best well-known block-symmetric pencils in the literature are those in the vector space $\mathbb{DL}(P)$. The pencils in the standard basis of this space are block-symmetric GFPR of special importance. Let $P(\lambda)$ be a matrix polynomial of odd degree $k$ and let $m$ be an odd positive integer. Then, as we will show in Theorem \ref{thm:main_GFPR}, the $m$th pencil $D_m(\lambda, P)$  in the standard basis of  $\mathbb{DL}(P)$, which is a GFPR with parameter $h=k-m$,  is permutationally block-congruent to a pencil  in $\langle \mathcal{O}_1^P\rangle.$  This holds, in particular, for $D_1(\lambda, P)$ and $D_k(\lambda, P)$.
\end{example}

\subsection{The second fundamental block-structure}
 We introduce in this section  the second fundamental  family of block-symmetric pencils.
This family is also associated with odd-degree matrix polynomials, but describing its block-structure is more involved.

Let  $P(\lambda)=\sum_{i=0}^k A_i\lambda^i$ be an $n\times n$ matrix polynomial of odd degree $k$, and let $s:=(k-1)/2$.
First, we define the pencil
\begin{equation}\label{eq:O2}
 \mathcal{O}_2^P(\lambda) := \left[
        \begin{array}{c:c:c|c}
  -A_k & \begin{matrix} \lambda A_k & 0 \end{matrix}&0& 0\\
            \hdashline
            \begin{matrix} \lambda A_k \\ 0 \end{matrix} & M(\lambda;P_{k-1}^{k-1}) & \begin{matrix} 0 \\ A_0 \end{matrix} & K_{s-1}(\lambda)^T \\
            \hdashline
            0&\begin{matrix} 0 & A_0 \end{matrix} & -\lambda A_0 & 0 \\  \hline
            0 &K_{s-1}(\lambda) & 0 & 0
        \end{array}
    \right],
\end{equation}
where  $P_{k-1}^{k-1}(\lambda)$ is defined in \eqref{eq:Pell} and $M(\lambda;P_{k-1}^{k-1})$ is defined in \eqref{eq:M(lambda,P)}.
 Notice that the pencil $\mathcal{O}_2^P(\lambda)$ is a block-symmetric block minimal bases pencil. 
However, this pencil is not an extended block-Kronecker pencil.

Next we give an example  to clarify the block-structure of  the pencil $\mathcal{O}^P_2(\lambda)$.
\begin{example}
Let $P(\lambda)=\sum_{i=0}^7 A_i\lambda^i\in\mathbb{F}[\lambda]^{n\times n}$.
Then,
\[
  \mathcal{O}^P_2(\lambda) = \left[\begin{array}{c:ccc:c|cc}
-A_7 & \lambda A_7 & 0 & 0 & 0 & 0 & 0 \\ \hdashline
\lambda A_7 & \lambda A_6+A_5 & 0 &0 & 0 & -I_n & 0 \\
0 & 0 & \lambda A_4+A_3 & 0 & 0 & \lambda I_n & -I_n \\
0 & 0 & 0 & \lambda A_2+A_1 & A_0 & 0 & \lambda I_n \\ \hdashline
0 & 0 & 0 & A_0 & -\lambda A_0 & 0 & 0 \\ \hline
0 & -I_n & \lambda I_n & 0 & 0 & 0 & 0\\
0 & 0 & -I_n & \lambda I_n & 0 & 0 & 0
  \end{array}\right].
\]
 Notice that, if we denote by $\Pi_2$ the block-permutation matrix that permutes the first block-column of $\mathcal{O}_2^P$ with the block-columns in positions 2--5,  we have
\begin{align*}
  \mathcal{O}_2^P(\lambda)\Pi_2 = &\left [ \begin{array}{c|c} M(\lambda) & K_3(\lambda)^TB_2\\ \hline B_1 K_3(\lambda) & 0 \end{array} \right ] := \\
  & \left[\begin{array}{cccc|ccc}
 \lambda A_7 & 0 & 0 & 0& -A_7 & 0 & 0 \\ 
 \lambda A_6+A_5 & 0 &0 & 0 &\lambda A_7 & -I_n & 0 \\
 0 & \lambda A_4+A_3 & 0 & 0 & 0 &\lambda I_n & -I_n \\
 0 & 0 & \lambda A_2+A_1 & A_0 & 0 &0 & \lambda I_n \\ \hline 
 0 & 0 & A_0 & -\lambda A_0 & 0 &0 & 0 \\ 
-I_n & \lambda I_n & 0 & 0 & 0 & 0 & 0\\
 0 & -I_n & \lambda I_n & 0 & 0 &0 & 0
  \end{array}\right],
 \end{align*}
  where 
  $$B_1= \left[ \begin{array}{ccc} 0 &  0 & -A_0 \\ I_n & 0 & 0 \\ 0 & I_n & 0  \end{array} \right]  \quad \mbox{and} \quad  B_2 = \left[ \begin{array}{ccc} A_7 & 0 & 0 \\0 & I_n & 0 \\ 0 & 0 & I_n  \end{array} \right] .$$
Thus, although $\mathcal{O}_2^P(\lambda)$ is not an extended block Kronecker pencil, it is only a column-permutation away from being so. 
 It is easy to see that the body $M(\lambda)$ of $ \mathcal{O}_2^P(\lambda)\Pi_2$ satisfies the AS condition for $P(\lambda)$.
 Hence, by Theorem \ref{thm:linearizationBKP_mod} and Remark \ref{rem:thm3.9},  the pencil $ \mathcal{O}_2^P(\lambda)\Pi_2$, and therefore $\mathcal{O}_2^P(\lambda)$,  is a strong linearization of $P(\lambda)$ if $A_0$ and $A_k$ are nonsingular matrices.
\end{example}

The procedure used in the previous example can be generalized to matrix polynomials of any odd-degree $k$. 
Denoting by $\Pi_2$ the block-permutation matrix that permutes the first block-column of  $\mathcal{O}^P_2(\lambda)$, defined in \eqref{eq:O2}, with the block-columns in positions  $2$ through $s+2=\frac{k+3}{2}$, we obtain 
\begin{equation}\label{eq:O2mod}
\mathcal{O}_2^P(\lambda)\Pi_2:=\left[
        \begin{array}{c:c|c:c}
 \begin{matrix} \lambda A_k & 0 \end{matrix}&0 & -A_k &0\\
          \hdashline  
    M(\lambda;P_{k-1}^{k-1}) & \begin{matrix} 0 \\ A_0 \end{matrix} &         \begin{matrix} \lambda A_k \\ 0 \end{matrix} & K_{s-1}(\lambda)^T \\
            \hline
            \begin{matrix} 0 & A_0 \end{matrix} & -\lambda A_0 & 0 & 0 \\  \hdashline  
   
            K_{s-1}(\lambda) & 0 & 0 & 0
        \end{array}
    \right],
\end{equation}
which is an $(s,n,s,n)$-extended block Kronecker pencil. 
Furthermore, if $A_0$ and $A_k$ are nonsingular, from Theorem \ref{thm:linearizationBKP_mod}, Remark \ref{rem:thm3.9},  and the fact that 
\[
\left[ \begin{array}{c:c}
\begin{matrix} \lambda A_k & 0 \end{matrix} & 0 \\ \hdashline
M(\lambda;P_{k-1}^{k-1}) & \begin{matrix} 0 \\ A_0 \end{matrix}
\end{array} \right],
\] 
satisfies  the AS condition for $P(\lambda)$,  it is immediately obtained that the pencil in \eqref{eq:O2mod} is a strong linearization of $P(\lambda)$.
In summary, the pencil $\mathcal{O}_2^P(\lambda)$ is a block-symmetric strong linearization of $P(\lambda)$ if $A_0$ and $A_k$ are nonsingular.
Moreover, $\mathcal{O}_2^P(\lambda)$ is symmetric (resp. Hermitian) whenever $P(\lambda)$ is symmetric (resp. Hermitian).

Motivated by the block-structure of the pencil \eqref{eq:O2mod} and by Theorem \ref{thm:linearizationBKP_mod}, we now consider a subfamily of extended block Kronecker pencils constructed from $\mathcal{O}_2^P(\lambda)\Pi_2$. 
Note that, among all the possible operations that would transform $\mathcal{O}_2^P(\lambda)\Pi_2$ into another extended block Kronecker pencil, we are only applying  some that will preserve the block-symmetry once the $(s+2)$th block column is permuted back to the original position, that is, the first block-column.
More precisely, we begin by considering  pencils of the form
\begin{equation}\label{gen-21}
 \left[ \begin{array}{cccc} I_{n} & 0 & 0 & B\\ 0 & I_{sn} & 0 & C\\ 0 & 0 & I_n & D \\ 0 & 0 & 0 & E\end{array} \right]
 \mathcal{O}_2^P(\lambda)\Pi_2 
      \left[ \begin{array}{cccc} I_{sn} & 0 & 0 & 0\\ 0 & I_n & 0 & 0\\ 0 & 0 & I_n & 0 \\ C^{\mathcal{B}} & D^{\mathcal{B}} & B^{\mathcal{B}} &  E^\mathcal{B}\end{array} \right],
    \end{equation}
 for some arbitrary $1\times (s-1)$, $s\times (s-1)$, $1\times (s-1)$ and $(s-1)\times (s-1)$ block matrices $B=[B_{ij}]$, $C=[C_{ij}]$, $D=[D_{ij}]$ and $E=[E_{ij}]$, with $n\times n$ block-entries $B_{ij}$, $C_{ij}$, $D_{ij}$ and $E_{ij}$.
 
Then, permuting  the $(s+2)$th block-column of the above pencil back to the first position, we get
\begin{equation*}
  \left[ \begin{array}{cccc} I_{n} & 0 & 0 & B\\ 0 & I_{sn} & 0 & C\\ 0 & 0 & I_{n} & D \\ 0 & 0 & 0 & E\end{array} \right]
 \mathcal{O}_2^P(\lambda)
\left[ \begin{array}{cccc} 0 &  0 & I_n & 0\\ I_{sn} & 0 & 0 & 0\\  0 & I_n & 0 &0  \\ C^{\mathcal{B}} & D^{\mathcal{B}} & B^{\mathcal{B}} & E^\mathcal{B} \end{array} \right]\Pi_2^\mathcal{B},
    \end{equation*}
    which equals
   \begin{equation}\label{gen-2}\left[ \begin{array}{cccc} I_{n} & 0 & 0 & B\\ 0 & I_{sn} & 0 & C\\ 0 & 0 & I_n & D \\ 0 & 0 & 0 & E\end{array} \right]
    \mathcal{O}_2^P(\lambda)
\left[ \begin{array}{cccc} I_n & 0 & 0 & B\\ 0 & I_{sn} & 0 & C\\ 0 & 0 & I_n & D \\ 0 & 0 & 0 & E \end{array} \right]^{\mathcal{B}}. 
\end{equation}
In this way,  we obtain the block-structure \eqref{gen-2} defining the second  fundamental family of block-structures associated with the matrix polynomial $P(\lambda)$.
\begin{definition}\label{def:first_family}
Let $P(\lambda)=\sum_{i=0}^k A_i\lambda^i\in\mathbb{F}[\lambda]^{n\times n}$ be a matrix polynomial of odd degree $k$,  let $s=(k-1)/2$.
{\rm The second fundamental block-structure family}, denoted by $\langle \mathcal{O}_2^P \rangle $, is the set of pencils of the form  (we are omitting the dependence on $\lambda$ in the pencil $K_{s-1}(\lambda)$ for lack of space) 
\begin{equation}\label{eq:second_family}
  \left[
        \begin{array}{c:c:c|c}
  -A_k & \begin{bmatrix} \lambda A_k & 0 \end{bmatrix} + BK_{s-1}&0& 0\\
            \hdashline
            \begin{bmatrix} \lambda A_k \\ 0 \end{bmatrix} + K_{s-1}^TB^\mathcal{B} & M(\lambda;P_{k-1}^{k-1}) + CK_{s-1}+K_{s-1}^TC^\mathcal{B}& \begin{bmatrix} 0 \\ A_0 \end{bmatrix} +  K_{s-1}^T D^\mathcal{B} & K_{s-1}^T E^\mathcal{B} \\
            \hdashline
            0&\begin{bmatrix} 0 & A_0 \end{bmatrix} + DK_{s-1} & -\lambda A_0 & 0 \\  \hline
            0 &EK_{s-1} & 0 & 0
        \end{array}
    \right],
\end{equation}
 where  $P_{k-1}^{k-1}(\lambda)$ is defined in \eqref{eq:Pell} and  $M(\lambda;P_{k-1}^{k-1})$ is defined in \eqref{eq:M(lambda,P)}, for some arbitrary  $1\times (s-1)$ block-matrix $B=[B_{ij}]$, $s\times (s-1)$ block-matrix $C=[C_{ij}]$, $1\times (s-1)$ block-matrix $D=[D_{ij}]$, and $(s-1)\times (s-1)$ block-matrix $E=[E_{ij}]$, with $n\times n$ block-entries $B_{ij}$, $C_{ij}$, $D_{ij}$ and $E_{ij}$, respectively.
\end{definition}

We note that every pencil in $\langle \mathcal{O}_2^P \rangle$ is a block minimal bases pencil if $E$ and $E^{\mathcal{B}}$ are nonsingular matrices.



The following theorem gives sufficient conditions for pencils in the family $\langle \mathcal{O}_2^P \rangle $ to be strong linearizations of an odd-degree matrix polynomial $P(\lambda)$.

 \begin{theorem}\label{thm:second family}
Let $P(\lambda)=\sum_{i=0}^k A_i\lambda^i\in\mathbb{F}[\lambda]^{n\times n}$ be a matrix polynomial of odd degree $k$, let $s=(k-1)/2$, and consider a pencil $\mathcal{L}(\lambda)\in \langle \mathcal{O}_2^P \rangle$, that is, a pencil  of the form (\ref{eq:second_family}).
If $A_0$, $ A_k$, $E$ and $E^\mathcal{B}$  are nonsingular, then $\mathcal{L}(\lambda)$ is a block-symmetric strong linearization of $P(\lambda)$.
Furthermore, if $P(\lambda)$, and all the block-entries of $B$, $C$, $D$ and $E$ are symmetric (resp. Hermitian), then $\mathcal{L}(\lambda)$  is symmetric (resp. Hermitian).
\end{theorem}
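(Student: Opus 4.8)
The plan is to deduce all three assertions from Theorem~\ref{thm:linearizationBKP_mod} by undoing the column permutation $\Pi_2$, exactly as is done for the family $\langle\mathcal{O}_1^P\rangle$ in Theorem~\ref{thm:first family}.

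For the block-symmetry, I would use that, by the very construction of the family (the derivation of \eqref{gen-2}), every $\mathcal{L}(\lambda)\in\langle\mathcal{O}_2^P\rangle$ equals $X\,\mathcal{O}_2^P(\lambda)\,X^{\mathcal{B}}$, where
\[
X=\begin{bmatrix} I_n & 0 & 0 & B\\ 0 & I_{sn} & 0 & C\\ 0 & 0 & I_n & D\\ 0 & 0 & 0 & E\end{bmatrix}
\]
is a constant $k\times k$ block matrix with $n\times n$ blocks. Because block-transposition reverses the order of a product and $\mathcal{O}_2^P(\lambda)$ is block-symmetric, $\mathcal{L}(\lambda)^{\mathcal{B}}=X\,\mathcal{O}_2^P(\lambda)^{\mathcal{B}}\,X^{\mathcal{B}}=\mathcal{L}(\lambda)$. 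If, in addition, $P(\lambda)$ and all block-entries of $B,C,D,E$ are symmetric (resp.\ Hermitian), then $X^{\mathcal{B}}=X^T$ (resp.\ $X^{\mathcal{B}}=X^*$) while $\mathcal{O}_2^P(\lambda)$ is symmetric (resp.\ Hermitian), so $\mathcal{L}(\lambda)=X\,\mathcal{O}_2^P(\lambda)\,X^T$ (resp.\ $X\,\mathcal{O}_2^P(\lambda)\,X^*$) is symmetric (resp.\ Hermitian) as well, by the same argument as in the proof of Theorem~\ref{thm:first family}.

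For the strong linearization claim, assume $A_0,A_k,E,E^{\mathcal{B}}$ nonsingular. Since permutational equivalence is a strict equivalence, it preserves the complete eigenstructure, so it is enough to show that the extended block Kronecker pencil \eqref{gen-21}, i.e.\ $X\,\bigl(\mathcal{O}_2^P(\lambda)\Pi_2\bigr)\,Z$ with $Z=\left[\begin{smallmatrix} I_{sn}&0&0&0\\0&I_n&0&0\\0&0&I_n&0\\ C^{\mathcal{B}}&D^{\mathcal{B}}&B^{\mathcal{B}}&E^{\mathcal{B}}\end{smallmatrix}\right]$, is a strong linearization of $P(\lambda)$ (recall $\mathcal{L}(\lambda)$ differs from \eqref{gen-21} only by a column permutation). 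I would then regroup the $k$ block-rows and block-columns of $\mathcal{O}_2^P(\lambda)\Pi_2$, displayed in \eqref{eq:O2mod}, into the coarse two-part ``body versus Kronecker blocks'' partition of \eqref{eq:BKP}; in that partition $\mathcal{O}_2^P(\lambda)\Pi_2=\left[\begin{smallmatrix} M(\lambda) & K_s(\lambda)^TB_2\\ B_1K_s(\lambda) & 0\end{smallmatrix}\right]$, where the body $M(\lambda)$ satisfies the AS condition for $P(\lambda)$ and $B_1,B_2$ are nonsingular because $A_0,A_k$ are, while $X$ becomes block-upper-triangular with diagonal blocks $I_{(s+1)n}$ and $\left[\begin{smallmatrix} I_n&D\\0&E\end{smallmatrix}\right]$ and $Z$ becomes block-lower-triangular with diagonal blocks $I_{(s+1)n}$ and $\left[\begin{smallmatrix} I_n&0\\ B^{\mathcal{B}}&E^{\mathcal{B}}\end{smallmatrix}\right]$, both nonsingular exactly because $E,E^{\mathcal{B}}$ are. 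Pulling the constant factors $B_1,B_2$ out of the Kronecker blocks, $X\,\bigl(\mathcal{O}_2^P(\lambda)\Pi_2\bigr)\,Z$ becomes
\[
\begin{bmatrix} I & \widetilde C_1\\ 0 & \widetilde B_1\end{bmatrix}
\begin{bmatrix} M(\lambda) & K_s(\lambda)^T\\ K_s(\lambda) & 0\end{bmatrix}
\begin{bmatrix} I & 0\\ \widetilde C_2 & \widetilde B_2\end{bmatrix}
\]
with $\widetilde B_1,\widetilde B_2$ nonsingular, which is precisely a pencil of the form \eqref{special-pencil} (cf.\ Remark~\ref{rem:thm3.9}) with body $M(\lambda)$ and $p=q=s$, so that $p+q+1=k=\deg P(\lambda)$, the degree being $k$ because $A_k$ is nonsingular. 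Theorem~\ref{thm:linearizationBKP_mod} then gives that this pencil, and hence also \eqref{gen-21} and $\mathcal{L}(\lambda)$, is a strong linearization of $P(\lambda)$.

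The hard part will not be any individual step but the bookkeeping in the regrouping: one must check that the $4\times4$ block partitions of $X$, $Z$ and $\mathcal{O}_2^P(\lambda)\Pi_2$ refine the coarse $2\times2$ ``body / Kronecker'' partition compatibly, that the regrouping really displays $X$ and $Z$ as the triangular factors above, and that the four nonsingularity hypotheses on $A_0,A_k,E,E^{\mathcal{B}}$ correspond exactly to nonsingularity of $B_1$, $B_2$ and of the two outer triangular factors. Everything else is a direct invocation of results already established, so no new idea is needed.
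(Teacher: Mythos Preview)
Your proof is correct and the block-symmetry and symmetric/Hermitian parts match the paper's argument exactly. For the strong linearization claim the paper takes a slightly shorter route: since \eqref{gen-2} already expresses $\mathcal{L}(\lambda)=X\,\mathcal{O}_2^P(\lambda)\,X^{\mathcal{B}}$ with $X$ (and $X^{\mathcal{B}}$) nonsingular precisely when $E$ (and $E^{\mathcal{B}}$) is, and since $\mathcal{O}_2^P(\lambda)$ was shown in the text preceding the definition to be a strong linearization whenever $A_0$ and $A_k$ are nonsingular, strict equivalence immediately gives the result without revisiting Theorem~\ref{thm:linearizationBKP_mod}. Your approach of passing to \eqref{gen-21}, regrouping into the coarse $2\times2$ partition, and invoking Theorem~\ref{thm:linearizationBKP_mod} directly is perfectly valid and the bookkeeping you flag as ``the hard part'' does go through as you describe; it simply reproves what the paper already established about $\mathcal{O}_2^P(\lambda)\Pi_2$ rather than quoting it.
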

\begin{proof}
When $A_0$ and $A_k$ are nonsingular, the extended block Kronecker pencil \eqref{eq:O2mod} and, thus, the pencil $\mathcal{O}_2^P(\lambda)$ are strong linearizations of $P(\lambda)$. 
In addition, we see from \eqref{gen-2} that if $E$ and $E^\mathcal{B}$ are nonsingular, then the pencil $\mathcal{L}(\lambda)$ is strictly equivalent to the pencil $\mathcal{O}_2^P(\lambda)$.
Therefore, in this case, $\mathcal{L}(\lambda)$ is a strong linearization of $P(\lambda)$. 
The pencil $\mathcal{L}(\lambda)$ is block-symmetric as a consequence of \eqref{TvsB}, together with the fact that $M(\lambda;P_{k-1}^{k-1})$ is block-symmetric. 
The fact that $\mathcal{L}(\lambda)$ is symmetric (resp. Hermitian) when $P(\lambda)$ and all the block-entries of $B$, $C$, $D$ and $E$ are symmetric (resp. Hermitian)  follows easily from the following facts.
First, $M(\lambda;P_{k-1}^{k-1})$ is symmetric (resp. Hermitian) when $P(\lambda)$ is symmetric (resp. Hermitian).
Secondly, we have $B^\mathcal{B}=B^T$ ($B^\mathcal{B}=B^*$), $C^\mathcal{B}=C^T$ ($C^\mathcal{B}=C^*$), $D^\mathcal{B}=D^T$ ($D^\mathcal{B}=D^*$) and $E^\mathcal{B}=E^T$ ($E^\mathcal{B}=E^*$) when all the block-entries of $B$, $C$, $D$ and $E$ are symmetric (resp. Hermitian).
\end{proof}

\begin{example}
 Let $P(\lambda)$ be a matrix polynomial of odd degree $k$ and let $m$ be an even positive integer. Then, as we will show in Theorem \ref{thm:main_GFPR}, the $m$th pencil $D_m(\lambda, P)$  in the standard basis of the vector space $\mathbb{DL}(P)$, which is a GFPR with parameter $h=k-m$,  is permutationally block congruent to a pencil in $\langle \mathcal{O}_2^P\rangle.$ 
\end{example}

\subsection{The third fundamental block-structure}
The third fundamental family of block-symmetric pencils  is defined for matrix polynomials of even degree.
So, let  $P(\lambda)$ be a matrix polynomial of even degree $k$, and let $s:=(k-2)/2$.
First, we define the pencil
\begin{equation}\label{eq:E1}
   \mathcal{E}_1^P(\lambda) := \left[\begin{array}{c|c:c}
   M(\lambda;P_{k-1}) & \begin{matrix} 0 \\ A_0 \end{matrix} & K_s(\lambda)^T \\ \hdashline
   \begin{matrix} 0 & A_0 \end{matrix} &- \lambda A_0 & 0\\ \hline
  K_s(\lambda) & 0 &  0
   \end{array}\right],
\end{equation}
where $P_{k-1}(\lambda)$ is defined in \eqref{eq:Pu} and  $M(\lambda;P_{k-1})$ is defined in \eqref{eq:M(lambda,P)}.
The pencil $ \mathcal{E}_1^P(\lambda)$ is an extended $(s,n,s+1,n)$-block Kronecker pencil\footnote{It can be seen as an extended $(s+1,n,s,n)$-block Kronecker pencil as well.}, with the solid lines indicating one of its natural partitions. 

Note that the body of $\mathcal{E}_1^P(\lambda)$, regardless of the chosen partition (see \cite[Theorem 3.10]{canonical}), satisfies the AS condition for $P(\lambda)$.
Thus, $\mathcal{E}_1^P(\lambda)$ is a strong linearization of $P(\lambda)$, provided that $A_0$ is nonsingular.
 Furthermore, the pencil $\mathcal{E}_1^P(\lambda)$ is block-symmetric, and it is symmetric (resp. Hermitian) when $P(\lambda)$ is symmetric (resp. Hermitian).

\begin{example}
Let $P(\lambda)=\sum_{i=0}^6 A_i\lambda^i\in\mathbb{F}[\lambda]^{n\times n}$. Then,
$$\mathcal{E}_1^P(\lambda)= \left [\begin{array}{ccc|c:cc} \lambda A_6 + A_{5} &0 &0 &0 &-I_n & 0\\ 
0 & \lambda A_4 + A_3 & 0 & 0 & \lambda I_n & - I_n\\ 0 & 0 & \lambda A_2 +A_1 & A_0 & 0 & \lambda I_n \\ \hdashline 0 & 0 & A_0 & -\lambda A_0 & 0 & 0 \\
\hline
-I_n & \lambda I_n & 0 & 0 & 0 & 0\\
0 & -I_n & \lambda I_n & 0 & 0 &0  \end{array} \right ].$$
\end{example}

 Motivated by the block-structure of the pencil  $\mathcal{E}_1^P(\lambda)$, we introduce the third fundamental family of block-structures by applying the following block-congruence
\begin{equation}\label{gen-3}
\left[ \begin{array}{ccc} I_{(s+1)n} & 0 & B\\ 0 & I_{n} & C \\ 0 & 0 & D \end{array} \right ]  \mathcal{E}_1^P(\lambda) \left[ \begin{array}{ccc} I_{(s+1)n} & 0 & 0 \\ 0 & I_{n} & 0 \\  
B^\mathcal{B} & C^\mathcal{B} & D^\mathcal{B} \end{array}  \right ],
\end{equation}
where $B=[B_{ij}]$ is a $(s+1)\times s$  block-matrix, $C=[C_{ij}]$ is an $1\times s$ block-matrix  and $D=[D_{ij}]$ is an $s\times s$ block-matrix, with $n\times n$ block-entries $B_{ij}$, $C_{ij}$ and $D_{ij}$, respectively.

\begin{definition}\label{def:third_family}
Let $P(\lambda)=\sum_{i=0}^k A_i\lambda^i\in\mathbb{F}[\lambda]^{n\times n}$ be a matrix polynomial of even degree $k$, and let $s=(k-2)/2$.
{\rm The third fundamental block-structure family}, denoted by $\langle \mathcal{E}_1^P \rangle $, is the set of pencils of the form
\begin{equation}\label{eq:third_family}
\left[\begin{array}{c|c:c}
   M(\lambda;P_{k-1})+BK_s(\lambda)+K_s(\lambda)^TB^\mathcal{B} & \begin{bmatrix} 0 \\ A_0 \end{bmatrix} + K_s(\lambda)^T C^\mathcal{B} & K_s(\lambda)^T D^\mathcal{B} \\ \hdashline
   \begin{bmatrix} 0 & A_0 \end{bmatrix} + CK_s(\lambda) & -\lambda A_0 & 0\\ \hline
  DK_s(\lambda)& 0 &  0
   \end{array}\right],
\end{equation}
where $P_{k-1}(\lambda)$ is defined in \eqref{eq:Pu} and  $M(\lambda;P_{k-1})$ is defined in \eqref{eq:M(lambda,P)}, for some arbitrary $(s+1)\times s$ block-matrix $B=[B_{ij}]$, $1\times s$ block-matrix $C=[C_{ij}]$ and  $s\times s$ block-matrix $D=[D_{ij}]$, with $n\times n$ block-entries $B_{ij}$, $C_{ij}$ and $D_{ij}$, respectively.
\end{definition}

Note that the pencils in $\langle \mathcal{E}_1^P \rangle$ are block minimal bases pencils  if  $A_0$, $D$ and $D^{\mathcal{B}}$ are nonsingunlar.

The following theorem gives sufficient conditions for the pencils in the family $\langle \mathcal{E}_1^P \rangle$ to be strong linearizations of the even-degree matrix polynomial $P(\lambda)$.
\begin{theorem}\label{thm:third family}
 Let $P(\lambda)$ be a matrix polynomial of even degree $k$, let $s=(k-2)/2$, and consider a pencil $\mathcal{L}(\lambda)\in \langle \mathcal{E}_1^P \rangle$ of the form (\ref{eq:third_family}).
If $A_0$, $D$ and $D^\mathcal{B}$ are nonsingular, then $\mathcal{L}(\lambda)$ is a block-symmetric strong linearization of $P(\lambda)$.
Moreover, if $P(\lambda)$ and all the block-entries $B_{ij}$, $C_{ij}$ and $D_{ij}$ are symmetric (resp. Hermitian), then $\mathcal{L}(\lambda)$ is symmetric (resp. Hermitian).
\end{theorem}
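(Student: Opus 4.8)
The strategy is to reduce Theorem~\ref{thm:third family} to the already-established linearization result for $\mathcal{E}_1^P(\lambda)$ by exhibiting $\mathcal{L}(\lambda)$ as a strict equivalence of $\mathcal{E}_1^P(\lambda)$, exactly as was done for the previous two families. First I would recall that, by the discussion preceding Definition~\ref{def:third_family}, every pencil $\mathcal{L}(\lambda)\in\langle\mathcal{E}_1^P\rangle$ is obtained from $\mathcal{E}_1^P(\lambda)$ by the block-congruence \eqref{gen-3}; writing
\[
T:=\left[\begin{array}{ccc} I_{(s+1)n} & 0 & B\\ 0 & I_n & C\\ 0 & 0 & D \end{array}\right],
\]
we have $\mathcal{L}(\lambda)=T\,\mathcal{E}_1^P(\lambda)\,T^{\mathcal{B}}$. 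When $D$ and $D^{\mathcal{B}}$ are nonsingular, $T$ and $T^{\mathcal{B}}$ are block-triangular with nonsingular diagonal blocks, hence invertible, so $\mathcal{L}(\lambda)$ is strictly equivalent to $\mathcal{E}_1^P(\lambda)$ and therefore shares its complete eigenstructure (finite and infinite elementary divisors, and, in the singular case, the numbers of right and left minimal indices), as recorded in Section~\ref{sec:notation}. Next, since $A_0$ is nonsingular, $\mathcal{E}_1^P(\lambda)$ is itself a strong linearization of $P(\lambda)$: it is an extended $(s,n,s+1,n)$-block Kronecker pencil whose body satisfies the AS condition for $P(\lambda)$ regardless of the chosen partition (as already noted, citing \cite[Theorem 3.10]{canonical}), so Theorem~\ref{thm:linearizationBKP_mod} (together with Remark~\ref{rem:thm3.9}) applies once we observe that the relevant $B_1,B_2$ blocks here are identities and hence nonsingular. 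Chaining the two facts gives that $\mathcal{L}(\lambda)$ is a strong linearization of $P(\lambda)$.

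Block-symmetry of $\mathcal{L}(\lambda)$ follows from the identities \eqref{TvsB} and \eqref{BKsB}: the body term $M(\lambda;P_{k-1})+BK_s(\lambda)+K_s(\lambda)^{T}B^{\mathcal{B}}$ is visibly block-symmetric because $M(\lambda;P_{k-1})$ is block-symmetric by \eqref{eq:M(lambda,P)} and $BK_s+K_s^{T}B^{\mathcal{B}}$ is a sum of a block-matrix and its block-transpose; the off-diagonal coupling blocks in \eqref{eq:third_family} are $\bigl[\begin{smallmatrix}0\\A_0\end{smallmatrix}\bigr]+K_s^{T}C^{\mathcal{B}}$ paired with $\bigl[\begin{smallmatrix}0&A_0\end{smallmatrix}\bigr]+CK_s$, and $K_s^{T}D^{\mathcal{B}}$ paired with $DK_s$, each pair being block-transposes of the other by \eqref{BKsB}; and the $(2,2)$ and $(3,3)$ blocks $-\lambda A_0$ and $0$ are trivially block-symmetric. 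Alternatively, and more conceptually, $\mathcal{L}(\lambda)=T\,\mathcal{E}_1^P(\lambda)\,T^{\mathcal{B}}$ with $\mathcal{E}_1^P(\lambda)$ block-symmetric makes $\mathcal{L}(\lambda)$ block-symmetric automatically, since $(TX T^{\mathcal{B}})^{\mathcal{B}}=T X^{\mathcal{B}} T^{\mathcal{B}}$ for block-transposition.

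For the last assertion, suppose $P(\lambda)$ and all block-entries $B_{ij},C_{ij},D_{ij}$ are symmetric (resp.\ Hermitian). Then $M(\lambda;P_{k-1})$ is symmetric (resp.\ Hermitian), because it is block-diagonal with blocks $\lambda A_j+A_{j-1}$ built from the symmetric (resp.\ Hermitian) coefficients of $P(\lambda)$; and under the entrywise symmetry hypothesis one has $B^{\mathcal{B}}=B^{T}$, $C^{\mathcal{B}}=C^{T}$, $D^{\mathcal{B}}=D^{T}$ (resp.\ with $*$ in place of $T$), so every block-transpose appearing in \eqref{eq:third_family} coincides with the ordinary transpose (resp.\ conjugate transpose). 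Combining these, the matrix coefficients of $\mathcal{L}(\lambda)$ are genuinely symmetric (resp.\ Hermitian), not merely block-symmetric. I expect no serious obstacle here: the entire argument is a transcription of the proofs of Theorems~\ref{thm:first family} and~\ref{thm:second family}, the only point requiring a moment's care being the verification that the body of $\mathcal{E}_1^P(\lambda)$ satisfies the AS condition under \emph{either} of its two natural $(s,n,s+1,n)$ / $(s+1,n,s,n)$ partitions, which is exactly what \cite[Theorem 3.10]{canonical} guarantees and what the accompanying footnote already flags.
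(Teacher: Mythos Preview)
Your proposal is correct and follows essentially the same route as the paper's proof: show $\mathcal{L}(\lambda)=T\,\mathcal{E}_1^P(\lambda)\,T^{\mathcal{B}}$ via \eqref{gen-3}, use nonsingularity of $D,D^{\mathcal{B}}$ to get strict equivalence, invoke the already-established fact that $\mathcal{E}_1^P(\lambda)$ is a strong linearization when $A_0$ is nonsingular, and handle block-symmetry and symmetry exactly as in Theorems~\ref{thm:first family} and~\ref{thm:second family}. One small inaccuracy: in the $(s,n,s+1,n)$ partition of $\mathcal{E}_1^P(\lambda)$ the matrix $B_2$ is \emph{not} the identity---it is a block-permutation-like matrix with a $-A_0$ block (so nonsingular precisely when $A_0$ is), which is exactly why the hypothesis on $A_0$ enters; your conclusion is unaffected since you correctly cite the paper's prior discussion for this fact.
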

\begin{proof}
If $A_0$ is nonsingular, the pencil  $\mathcal{E}_1^P(\lambda)$ is a strong linearization of $P(\lambda)$.
Additionally, if $D$ and $D^\mathcal{B}$ are nonsingular, the pencil $\mathcal{L}(\lambda)$ is strictly equivalent to  $\mathcal{E}_1^P(\lambda)$ (see \eqref{gen-3}).
Thus, if $A_0$, $D$ and $D^\mathcal{B}$ are nonsingular, $\mathcal{L}(\lambda)$ is a strong linearization of $P(\lambda)$. 
The fact that $\mathcal{L}(\lambda)$ is block-symmetric follows readily from \eqref{TvsB} and the fact that $ M(\lambda;P_{k-1})$ is block-symmetric. 
Finally, the fact that $\mathcal{L}(\lambda)$ is symmetric (resp. Hermitian) when $P(\lambda)$ and all the block-entries $B_{ij}$, $C_{ij}$ and $D_{ij}$ are symmetric (resp. Hermitian)  follows easily from the following facts.
First, $M(\lambda;P_{k-1})$ is symmetric (resp. Hermitian) when $P(\lambda)$ is symmetric (resp. Hermitian).
Secondly, we have $B^\mathcal{B}=B^T$ ($B^\mathcal{B}=B^*$), $C^\mathcal{B}=C^T$ ($C^\mathcal{B}=C^*$) and $D^\mathcal{B}=D^T$ ($D^\mathcal{B}=D^*$) when all the block-entries $B_{ij}$, $C_{ij}$ and $D_{ij}$ are symmetric (resp. Hermitian). 
\end{proof}

\begin{example}
Let $P(\lambda)$ be a matrix polynomial of even degree $k$ and let $m$ be an odd positive integer. Then, as we will show in Theorem \ref{thm:main_GFPR}, the $m$th pencil $D_m(\lambda, P)$  in the standard basis of the vector space $\mathbb{DL}(P)$, which is a block-symmetric GFPR with parameter $h=k-m$,  is permutationally block congruent to a pencil in  $\langle \mathcal{E}_1^P\rangle.$  This holds true, in  particular, for $D_1(\lambda, P)$. 
\end{example}

\subsection{The fourth fundamental block-structure}
The fourth fundamental block-structure family is also associated with even-degree matrix polynomials.
So, let  $P(\lambda)$ be a matrix polynomial of even degree $k$, and let $s:=(k-2)/2$.
First, we define the pencil
\begin{equation}\label{eq:E2}
   \mathcal{E}_2^P(\lambda) :=
    \left[\begin{array}{c:c|c}
   -A_k & \begin{matrix} \lambda A_k & 0 \end{matrix} & 0 \\ \hdashline
   \begin{matrix} \lambda A_k \\ 0 \end{matrix} & M(\lambda;P^{k-1}) &   K_s(\lambda)^T \\ \hline
   0 &   K_s(\lambda) & 0
     \end{array}\right],
\end{equation} 
where  $P^{k-1}(\lambda)$ is defined in \eqref{eq:Pd}  and $M(\lambda;P^{k-1})$ is defined in \eqref{eq:M(lambda,P)}.  By applying a block column-permutation $\Pi_4$ to  $\mathcal{E}_2^P(\lambda)$, we obtain the pencil 
\[
   \mathcal{E}_2^P(\lambda)\Pi_4:=
    \left[\begin{array}{c|c:c}
    \begin{matrix} \lambda A_k & 0 \end{matrix} & -A_k  & 0 \\ \hdashline
   M(\lambda;P^{k-1}) & \begin{matrix} \lambda A_k \\ 0 \end{matrix} & K_s(\lambda)^T \\ \hline
     K_s(\lambda) & 0 & 0
     \end{array}\right],
\]
which is an extended $(s,n,s+1,n)$-block Kronecker pencil.
Notice that the body of the pencil $\mathcal{E}_2^P(\lambda)\Pi_4$ satisfies the AS condition for $P(\lambda)$.
Hence, $\mathcal{E}_2^P(\lambda)\Pi_4$ and, thus, $\mathcal{E}_2^P(\lambda)$, are strong linearizations of $P(\lambda)$ if $A_k$ is nonsingular. 
Moreover, the pencil $\mathcal{E}_2^P(\lambda)$ is block-symmetric, and it is symmetric (resp. Hermitian) provided that $P(\lambda)$ is symmetric (resp. Hermitian).

\begin{example}
Let $P(\lambda)=\sum_{i=0}^6  A_i\lambda^i\in\mathbb{F}[\lambda]^{n\times n}$. Then,
$$\mathcal{E}_2^P(\lambda)= \left [\begin{array}{cccc|cc} - A_6  & \lambda A_6 &0 &0 & 0 & 0\\ 
\lambda A_6 & \lambda A_5 + A_4 & 0 & 0 & - I_n & 0\\ 0 & 0 & \lambda A_3 +A_2 & 0 & \lambda I_n & - I_n \\ 0 & 0 & 0 & \lambda A_1+A_0 & 0 & \lambda I_n \\
\hline
0 &-I_n & \lambda I_n & 0 & 0 & 0 \\
0 & 0 & -I_n & \lambda I_n & 0 & 0  \end{array} \right ].$$
By permuting the first block-column with the block-columns in positions 2-4, we get
$$\mathcal{E}_2^P(\lambda)\Pi_4= \left [\begin{array}{ccc|ccc}  \lambda A_6 & 0 &0 &-A_6 &  0 & 0\\ 
  \lambda A_5 + A_4 & 0 & 0 &\lambda A_6 &  - I_n & 0\\  0 & \lambda A_3 +A_2 & 0 &0 & \lambda I_n & - I_n \\  0 & 0 & \lambda A_1+A_0 &0 & 0 & \lambda I_n \\
\hline
-I_n & \lambda I_n & 0 & 0 & 0 &0 \\
 0 & -I_n & \lambda I_n & 0 & 0 &0 \end{array} \right ],$$
which is clearly an extended block Kronecker pencil.
\end{example}

Inspired by the  block-structure of  $\mathcal{E}_2^P(\lambda)\Pi_4$, we consider extended block Kronecker pencils of the form 
\begin{equation}\label{eq:equivalence for E2}
\left[ \begin{array}{ccc} I_n & 0 & C \\ 0 & I_{(s+1)n} & 
B\\ 0 & 0 & D \end{array}\right] \left[\begin{array}{c|c:c}
    \begin{matrix} \lambda A_k & 0 \end{matrix} & -A_k  & 0 \\ \hdashline
   M(\lambda;P^{k-1}) & \begin{matrix} \lambda A_k \\ 0 \end{matrix} & K_s(\lambda)^T \\ \hline
     K_s(\lambda) & 0 & 0
     \end{array}\right] \left[ \begin{array}{ccc} I_{(s+1)n} & 0 & 0\\ 0 & I_n & 0 \\ B^{\mathcal{B}} & C^{\mathcal{B}} &  D^\mathcal{B} \end{array}\right]
     \end{equation}
for arbitrary matrices $C \in \mathbb{F}^{n \times sn}$, $B\in \mathbb{F}^{(s+1)n \times  sn}$  and  $D\in \mathbb{F}^{sn \times sn}$,
or, equivalently,
\[
   \left[\begin{array}{c|c:c}
    \begin{bmatrix} \lambda A_k & 0 \end{bmatrix} + CK_s(\lambda) & -A_k & 0 \\ \hdashline
    M(\lambda;P^{k-1}) + BK_s(\lambda)+K_s(\lambda)^TB^\mathcal{B}&  \begin{bmatrix} \lambda A_k \\ 0 \end{bmatrix} + K_s(\lambda)^TC^\mathcal{B} &  K_s(\lambda)^T D^\mathcal{B} \\ \hline
   DK_s(\lambda) & 0 & 0
     \end{array}\right].
\]

Reversing the block-permutation we did originally, we obtain the block-structure defining the fourth family of block-structures.
\begin{definition}\label{def:fourth_family}
Let $P(\lambda)=\sum_{i=0}^k A_i\lambda^i\in\mathbb{F}[\lambda]^{n\times n}$ be a matrix polynomial of even degree $k$, and let $s=(k-2)/2$.
{\rm The fourth fundamental block-structure family}, denoted by $\langle \mathcal{E}_2^P \rangle $, is the set of pencils of the form
\begin{equation}\label{eq:fourth_family} 
   \left[\begin{array}{c:c|c}
   -A_k & \begin{bmatrix} \lambda A_k & 0 \end{bmatrix} + CK_s(\lambda) & 0 \\ \hdashline
   \begin{bmatrix} \lambda A_k \\ 0 \end{bmatrix} + K_s(\lambda)^TC^\mathcal{B} & M(\lambda;P^{k-1}) + BK_s(\lambda)+K_s(\lambda)^TB^\mathcal{B}&   K_s(\lambda)^T  D^\mathcal{B} \\ \hline
   0 &   DK_s(\lambda) & 0
     \end{array}\right],
\end{equation}
where  $P^{k-1}(\lambda)$ is defined in \eqref{eq:Pd}  and $M(\lambda;P^{k-1})$ is defined in \eqref{eq:M(lambda,P)}, 
for an arbitrary $(s+1)\times s$ block-matrix $B=[B_{ij}]$, $1\times n$ block-matrix $C=[C_{ij}]$, and  $s\times s$ block-matrix $D=[D_{ij}]$, with $n\times n$ block entries $B_{ij}$, $C_{ij}$ and $D_{ij}$, respectively.

\end{definition}

Note that all pencils in $\langle \mathcal{E}_2^P \rangle$ are block minimal bases pencils if $A_k$,  $D$, and $D^{\mathcal{B}}$ are nonsingular.

The following theorem gives necessary and sufficient conditions for pencils in the family $\langle \mathcal{E}_2^P \rangle$ to be strong linearizations of the even-degree matrix polynomial $P(\lambda)$. 
\begin{theorem}
 Let $P(\lambda)$ be a matrix polynomial of even degree $k$, let $s=(k-2)/2$, consider a pencil $\mathcal{L}(\lambda)\in \langle \mathcal{E}_2^P \rangle $ of the form (\ref{eq:fourth_family}). 
If $A_k $, $D$ and $D^\mathcal{B}$ are nonsingular, then $\mathcal{L}(\lambda)$ is a block-symmetric strong linearization of $P(\lambda)$.
Moreover, if $P(\lambda)$ and all the block-entries $B_{ij}$, $C_{ij}$ and $D_{ij}$ are symmetric (resp. Hermitian), then $\mathcal{L}(\lambda)$ is symmetric (resp. Hermitian).
\end{theorem}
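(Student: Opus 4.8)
The plan is to follow the template of the proof of Theorem~\ref{thm:third family}, using three facts assembled in the discussion preceding this statement: first, that after the column permutation $\Pi_4$ the pencil $\mathcal{E}_2^P(\lambda)\Pi_4$ is an extended $(s,n,s+1,n)$-block Kronecker pencil whose body satisfies the AS condition for $P(\lambda)$; second, that every $\mathcal{L}(\lambda)\in\langle\mathcal{E}_2^P\rangle$ of the form \eqref{eq:fourth_family} is a block-congruence transform of $\mathcal{E}_2^P(\lambda)$; and third, that $M(\lambda;P^{k-1})$ is block-symmetric, and symmetric (resp.\ Hermitian) whenever $P(\lambda)$ is.

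First I would record that $\mathcal{E}_2^P(\lambda)$ is a strong linearization of $P(\lambda)$ whenever $A_k$ is nonsingular. Indeed, writing $\mathcal{E}_2^P(\lambda)\Pi_4$ in the form \eqref{special-pencil}, the matrix playing the role of $B_2$ equals $\diag(A_k,I_n,\ldots,I_n)$ and the matrix playing the role of $B_1$ equals $I_{sn}$; since the body of $\mathcal{E}_2^P(\lambda)\Pi_4$ satisfies the AS condition for $P(\lambda)$, Theorem~\ref{thm:linearizationBKP_mod} then shows that $\mathcal{E}_2^P(\lambda)\Pi_4$ is a strong linearization of $P(\lambda)$ as soon as $\diag(A_k,I_n,\ldots,I_n)$ is nonsingular, i.e.\ as soon as $A_k$ is. Because $\Pi_4$ is a block-permutation matrix, and block-permutation does not alter the eigenstructure, $\mathcal{E}_2^P(\lambda)$ is then a strong linearization of $P(\lambda)$ as well. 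Moreover $\mathcal{E}_2^P(\lambda)$ is block-symmetric by \eqref{TvsB} and the block-symmetry of $M(\lambda;P^{k-1})$, and it is symmetric (resp.\ Hermitian) when $P(\lambda)$ is, again because $M(\lambda;P^{k-1})$ is.

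Next I would pass from $\mathcal{E}_2^P(\lambda)$ to an arbitrary $\mathcal{L}(\lambda)\in\langle\mathcal{E}_2^P\rangle$. Undoing the column permutation $\Pi_4$ in \eqref{eq:equivalence for E2} — precisely the reduction carried out just before Definition~\ref{def:fourth_family} — shows that \eqref{eq:fourth_family} equals $X\,\mathcal{E}_2^P(\lambda)\,X^{\mathcal{B}}$, where $X$ is the block upper-triangular matrix whose diagonal blocks are $I_n$, $I_{(s+1)n}$ and $D$ and whose only other nonzero blocks are $C$ (in the $(1,3)$ block-position) and $B$ (in the $(2,3)$ block-position); here \eqref{TvsB} and \eqref{BKsB} are used to recognize the right factor as $X^{\mathcal{B}}$, and a direct block multiplication, of exactly the kind behind \eqref{eq:second_family}, confirms all block-entries of \eqref{eq:fourth_family}. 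Since $X$ is block triangular, it is nonsingular if and only if $D$ is, and likewise $X^{\mathcal{B}}$ — block lower triangular with diagonal blocks $I_n$, $I_{(s+1)n}$ and $D^{\mathcal{B}}$ — is nonsingular if and only if $D^{\mathcal{B}}$ is. Hence, when $A_k$, $D$ and $D^{\mathcal{B}}$ are all nonsingular, $\mathcal{L}(\lambda)=X\,\mathcal{E}_2^P(\lambda)\,X^{\mathcal{B}}$ is strictly equivalent to $\mathcal{E}_2^P(\lambda)$, and therefore a strong linearization of $P(\lambda)$; and $\mathcal{L}(\lambda)$ is block-symmetric because $\mathcal{E}_2^P(\lambda)$ is and block-congruence preserves block-symmetry.

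Finally, for the symmetry (resp.\ Hermitian) assertion I would inspect \eqref{eq:fourth_family} under the extra hypotheses: $M(\lambda;P^{k-1})$ is symmetric (resp.\ Hermitian) whenever $P(\lambda)$ is, which is immediate from \eqref{eq:M(lambda,P)}; and $B^{\mathcal{B}}=B^{T}$, $C^{\mathcal{B}}=C^{T}$, $D^{\mathcal{B}}=D^{T}$ (resp.\ $B^{\mathcal{B}}=B^{*}$, $C^{\mathcal{B}}=C^{*}$, $D^{\mathcal{B}}=D^{*}$) precisely when all the block-entries $B_{ij}$, $C_{ij}$, $D_{ij}$ are symmetric (resp.\ Hermitian), so applying $(\cdot)^{T}$ (resp.\ $(\cdot)^{*}$) to \eqref{eq:fourth_family} returns the same pencil. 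I expect the only genuinely delicate step to be the bookkeeping in the third paragraph: verifying that reversing $\Pi_4$ converts the one-sided transformation \eqref{eq:equivalence for E2} into an honest block-congruence $X\,\mathcal{E}_2^P(\lambda)\,X^{\mathcal{B}}$ and that the transforming matrix is nonsingular exactly under the stated conditions on $D$ and $D^{\mathcal{B}}$; everything else is a faithful transcription of the arguments already given for the families $\langle\mathcal{O}_1^P\rangle$, $\langle\mathcal{O}_2^P\rangle$ and $\langle\mathcal{E}_1^P\rangle$.
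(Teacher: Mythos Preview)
Your proposal is correct and follows essentially the same route as the paper's own proof: you use that $\mathcal{E}_2^P(\lambda)$ is a strong linearization when $A_k$ is nonsingular (via the column permutation $\Pi_4$ and Theorem~\ref{thm:linearizationBKP_mod}), that any $\mathcal{L}(\lambda)\in\langle\mathcal{E}_2^P\rangle$ is strictly equivalent to $\mathcal{E}_2^P(\lambda)$ when $D$ and $D^\mathcal{B}$ are nonsingular, and then handle block-symmetry and the symmetric/Hermitian case exactly as for the earlier families. Your write-up is in fact a bit more explicit than the paper's, in that you spell out the block-congruence $\mathcal{L}(\lambda)=X\,\mathcal{E}_2^P(\lambda)\,X^{\mathcal{B}}$ (mirroring what the paper does in detail for $\langle\mathcal{O}_2^P\rangle$ in \eqref{gen-2}), whereas the paper simply points to \eqref{eq:equivalence for E2} and invokes strict equivalence; but the substance is the same.
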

\begin{proof}
If $A_k$ is nonsingular, the pencil $\mathcal{E}_2^P(\lambda)$ is a strong linearization of $P(\lambda)$.
In addition, notice from \eqref{eq:equivalence for E2} that if $D$ and $D^\mathcal{B}$ are nonsingular, the pencil $\mathcal{L}(\lambda)$ is strictly equivalent to $\mathcal{E}_2^P(\lambda)$.
Thus, if $A_k$, $D$ and $D^\mathcal{B}$ are nonsingular, then $\mathcal{L}(\lambda)$ is a strong linearization of $P(\lambda)$. 
The fact that $\mathcal{L}(\lambda)$ is block-symmetric follows easily from \eqref{BKsB} and the fact that $M(\lambda;P^{k-1})$ is block-symmetric. 
Finally, notice the following two facts. 
First, if $P(\lambda)$ is symmetric (resp. Hermitian) so is $M(\lambda;P^{k-1})$.
Secondly, when all the block-entries of $B$, $C$ and $D$ are symmetric (resp. Hermitian.), we have $B^\mathcal{B} = B^T$ (resp. $B^\mathcal{B} = B^*$), $C^\mathcal{B} = C^T$ (resp. $C^\mathcal{B} = C^*$) and $D^\mathcal{B} = D^T$ (resp. $D^\mathcal{B} = D^*$).
Hence, if $P(\lambda)$ and all the block-entries $B_{ij}$, $C_{ij}$ and $D_{ij}$ are symmetric (resp. Hermitian), then $\mathcal{L}(\lambda)$ is symmetric (resp. Hermitian).
\end{proof}


\begin{example}
 Let $P(\lambda)$ be a matrix polynomial of even degree $k$ and let $m$ be an even positive integer. Then, as we will show in Theorem \ref{thm:main_GFPR}, the $m$th pencil $D_m(\lambda, P)$  in the standard basis of the vector space $\mathbb{DL}(P)$, which is a block-symmetric GFPR with parameter $h=k-m$,  is permutationally block congruent to a pencil in  $\langle \mathcal{E}_2^P\rangle.$ This holds true, in particular, for $D_k(\lambda, P)$.
 \end{example}


 \section{ Block-symmetric generalized Fiedler pencils and block-symmetric generalized Fiedler pencils with repetition}\label{sec:GFPR-def}

We introduce in this section the block-symmetric generalized Fiedler pencils and the family of block-symmetric generalized Fiedler pencils with repetition.
We start with some concepts and basic results needed for those definitions. 

\subsection{The index tuple notation and matrix assignments}\label{sec:index-tuples}

We start by introducing the fundamental definition of an index tuple and some related notions.

 \begin{definition}{\rm \cite[Definition 3.1]{GFPR}}
We call an \emph{index tuple}  a finite ordered sequence of integer numbers.
Each of these integers is called an \emph{index} of the tuple.
The number of indices in an index tuple $\mathbf{t}$ is called its \emph{length} and is denoted by $|\mathbf{t}|$. 
For integers $a$ and $b$, we call the tuple $(a:b)$ a \emph{string}.

\end{definition}

We will use the following notation for some important basic operations with tuples.
If $\mathbf{t}=(t_1, \ldots, t_r)$ is an index tuple, we denote  $-\mathbf{t}:=(-t_1,\hdots,-t_r)$, and, when $a$ is an integer, we denote $a+\mathbf{t}:=(a+t_1, a+t_2, \ldots, a+t_r)$. 
We call the \emph{reversal index tuple} of $\mathbf{t}$  the index tuple $\rev(\mathbf{t}):=(t_r, \hdots, t_2,  t_1).$
Additionally, given index tuples $\mathbf{t}_1,\hdots,\mathbf{t}_s$,  we denote by $(\mathbf{t}_1,\hdots,\mathbf{t}_s)$ the index tuple obtained by concatenating the indices in the index tuples $\mathbf{t}_1,\hdots,\mathbf{t}_s$ in the indicated order.

An important property of  index tuples used to define the block-symmetric GFPR is the so-called Successor Infix Property, which we introduce in the following definition.
\begin{definition}
\label{defSIP}{\rm \cite[Definition 7]{ant-vol11}} Let ${\mathbf{t}}=(i_{1},i_{2},\hdots,i_{r})$
be an index tuple of either all nonnegative integers or all negative integers.
 Then, ${\mathbf{t}}$ is said to satisfy the \emph{Successor
Infix Property (SIP)} if for every pair of indices $i_{a},i_{b}\in{\mathbf{t}%
}$, with $1\leq a<b\leq r$, satisfying $i_{a}=i_{b}$, there exists at least
one index $i_{c}=i_{a}+1$ with $a<c<b$.
\end{definition}
\begin{remark}\label{remark:SIP}
We note the following basic properties of tuples satisfying the SIP.
Any subtuple of consecutive indices of a tuple satisfying the SIP also satisfies the SIP. 
The reversal of any tuple satisfying the SIP also satisfies
the SIP.
If the tuple $\mathbf{t}$ has no repeated indices, then $\mathbf{t}$ satisfies the SIP.
\end{remark}

The following  definitions are motivated by the construction of block-symmetric GFPR in Section \ref{sec:GFPR-def}.
For more details, we refer the reader to \cite[Section 4]{Her-GFPR} and \cite{FPR3}.



\begin{definition}{\rm \cite[Definition 4.3]{Her-GFPR}}
Let $h$ be a nonnegative integer, and let $p=0$ if $h$ is even, and $p=1$ is $h$ is odd.
Then, we call the index tuple
\[
\mathbf{w}_h:= (h-1:h,h-3:h-2,\hdots,p+1:p+2,0:p)
\]
the \emph{admissible tuple associated with the integer $h\geq 0$}.
\end{definition}

Notice that the tuple $\mathbf{w}_h$ is a permutation of the tuple $(0:h)$.

\begin{definition}{\rm \cite[Definition 4.3]{Her-GFPR}}
Let $h$ be a nonnegative integer, and let $\mathbf{w}_h$ be the admissible tuple associated with $h$.
Then, the symmetric complement of $\mathbf{w}_h$ is the tuple 
\begin{itemize}
\item $\mathbf{c}_h:=(h-1,h-3,\hdots,2,0)$ if $h$ is odd;
\item $\mathbf{c}_h:=(h-1,h-3,\hdots,1)$ if $h>0$ is even;
\item $\mathbf{c}_h:=\emptyset$ is $h=0$.
\end{itemize}
\end{definition}

\begin{lemma}\label{wcSIP} {\rm \cite[Lemma 3.11]{FPR1} }
Let $h$ be a nonnegative integer, let $\mathbf{w}_h$ be the admissible tuple associated with $h$, and let $\mathbf{c}_h$ be the symmetric complement of $\mathbf{w}_h$. Then, the index tuple $(\mathbf{w}_h, \mathbf{c}_h)$ satisfies the SIP.
\end{lemma}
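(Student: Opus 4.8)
The plan is to prove that $(\mathbf{w}_h, \mathbf{c}_h)$ satisfies the SIP by induction on $h$. First I would dispose of the trivial cases $h=0$ (where $\mathbf{c}_h=\emptyset$ and $\mathbf{w}_0=(0)$ has no repeated indices, so it satisfies the SIP by Remark~\ref{remark:SIP}) and $h=1$ (where $\mathbf{w}_1=(0:1)=(0,1)$ and $\mathbf{c}_1=(0)$, so the concatenation is $(0,1,0)$: the only repeated pair is the two $0$'s, and the index $1$ sitting between them is a $0+1$, so the SIP holds). For the inductive step I would exploit the recursive structure of $\mathbf{w}_h$: stripping off the leading string $(h-1:h)=(h-1,h)$, the remainder of $\mathbf{w}_h$ is exactly $\mathbf{w}_{h-2}$ (the admissible tuple on the integers $0$ through $h-2$), and similarly the remainder of $\mathbf{c}_h$ after its leading entry $h-1$ is exactly $\mathbf{c}_{h-2}$. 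Thus $(\mathbf{w}_h,\mathbf{c}_h) = (h-1,\,h,\,\mathbf{w}_{h-2},\,h-1,\,\mathbf{c}_{h-2})$, and by the induction hypothesis $(\mathbf{w}_{h-2},\mathbf{c}_{h-2})$ already satisfies the SIP.

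The key analytical step is then to check that prepending $(h-1, h)$ and inserting the extra copy of $h-1$ between $\mathbf{w}_{h-2}$ and $\mathbf{c}_{h-2}$ does not destroy the SIP. I would argue as follows. The indices of $\mathbf{w}_{h-2}$ and $\mathbf{c}_{h-2}$ all lie in $\{0,\dots,h-2\}$, so the only \emph{new} repeated pairs created involve the index $h-1$ (the value $h$ appears only once, in the leading string). There are two kinds of such pairs: the leading $h-1$ paired with the inserted $h-1$, and the inserted $h-1$ paired with any $h-1$ occurring inside $\mathbf{c}_{h-2}$ or $\mathbf{w}_{h-2}$. For the first kind, the index $h$ lies strictly between the two copies of $h-1$ and equals $(h-1)+1$, so the SIP requirement is met. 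For the second kind, note that $h-1$ does not appear in $\mathbf{w}_{h-2}$ at all (its indices only go up to $h-2$), and $h-1$ appears in $\mathbf{c}_{h-2}$ only if $h-2$ is odd, i.e.\ $h$ is odd — but then $\mathbf{c}_{h-2}=(h-3,h-5,\dots)$ on the integers up to $h-2$, whose largest entry is $h-3$, so in fact $h-1$ never appears in $\mathbf{c}_{h-2}$ either. Hence the second kind of pair does not occur. One must also re-examine pairs of equal indices that were already present within $(\mathbf{w}_{h-2},\mathbf{c}_{h-2})$: these still satisfy the SIP because the witnessing successor index lay strictly between them and that relative position is unchanged by prepending material and inserting $h-1$ outside nothing relevant — more precisely, for a repeated pair both of whose occurrences lie in $\mathbf{w}_{h-2}$ the witness also lies in $\mathbf{w}_{h-2}$, and for a pair with one occurrence in each of $\mathbf{w}_{h-2}$, $\mathbf{c}_{h-2}$ I would need the inserted $h-1$ not to interfere, which it cannot since it is not a valid witness value unless it equals that index plus one; checking this last sub-case is the only slightly delicate bookkeeping.

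I expect the main obstacle to be precisely this careful case analysis of which repeated pairs are "new" versus "old" and verifying that the single inserted copy of $h-1$, placed at the seam between $\mathbf{w}_{h-2}$ and $\mathbf{c}_{h-2}$, is harmless — one has to be sure the inserted index is neither a spurious new repetition without a successor witness nor does it split an existing witness away from the pair it was witnessing. A cleaner alternative, which I would fall back on if the bookkeeping gets unwieldy, is to invoke directly the definitions of $\mathbf{w}_h$ and $\mathbf{c}_h$ and observe the explicit pattern: writing out $(\mathbf{w}_h,\mathbf{c}_h)$ in full, each value $j\in\{0,\dots,h\}$ appears exactly twice except possibly for the largest one, and between the two occurrences of $j$ the value $j+1$ always appears (this is essentially built into the "$i-1:i$" string structure of $\mathbf{w}_h$ together with the descending structure of $\mathbf{c}_h$). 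Establishing that explicit "$j+1$ lies between the two $j$'s" claim for every $j$, which can be done by tracking positions in the concatenation directly, then gives the SIP immediately and bypasses the induction entirely; I would present the induction as the primary argument and mention the direct verification as a remark if space allows.
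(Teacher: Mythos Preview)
The paper does not actually prove this lemma; it simply cites \cite[Lemma~3.11]{FPR1} and states the result without argument. So there is no ``paper's own proof'' to compare against, and your proposal stands on its own.

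Your inductive argument is correct. The recursive decompositions $\mathbf{w}_h=(h-1,h,\mathbf{w}_{h-2})$ and $\mathbf{c}_h=(h-1,\mathbf{c}_{h-2})$ hold for both parities of $h$ (you should state this explicitly), and the case analysis of new repeated pairs is complete: the only value repeated by the new material is $h-1$, and the index $h$ immediately following the first $h-1$ is the required witness. Your worry about the inserted $h-1$ ``splitting an existing witness away from the pair it was witnessing'' is unfounded and can be dropped: inserting an extra entry into a tuple can only \emph{add} candidate witnesses between two fixed positions, never remove one, since relative order of the remaining entries is preserved. So the bookkeeping you flag as delicate is in fact trivial.

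One small correction to your fallback remark: it is not true that ``each value $j\in\{0,\dots,h\}$ appears exactly twice except possibly for the largest one.'' Only the values in $\mathbf{c}_h$ (namely $h-1,h-3,\dots$) appear twice in $(\mathbf{w}_h,\mathbf{c}_h)$; the remaining values appear once. This does not affect your main induction, but if you keep the alternative direct-verification paragraph you should correct the claim to say that the repeated values are precisely those of $\mathbf{c}_h$, and for each such $j$ the witness $j+1$ lies in the string $(j:j+1)$ of $\mathbf{w}_h$, which sits between the two occurrences of $j$.
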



The matrix coefficients of  the block-symmetric GFPR (and that we review in this section) are products of  elementary block-matrices, whose definition we recall next.
\begin{definition}{\rm \cite{GFPR}}
Let $k\geq 2$ be an integer and let $B$ be an arbitrary $n\times n$ matrix. 
We call \emph{elementary matrices} the  following  $k\times k$ block-matrices partitioned into blocks of size $n\times n$:
\begin{equation*}
\,M_{0}(B):=\left[
\begin{tabular}
[c]{c|c}%
$I_{(k-1)n}$ & $0$\\\hline
$0$ & $B$%
\end{tabular}
\ \right]  ,\quad M_{-k}(B):=\left[
\begin{tabular}
[c]{c|c}%
$B$ & $0$\\\hline
$0$ & $I_{(k-1)n}$%
\end{tabular}
\ \right]  , \label{m0-mk}%
\end{equation*}

\begin{equation}
M_{i}(B):=\left[
\begin{tabular}
[c]{c|cc|c}%
$I_{(k-i-1)n}$ & $0$ & $0$ & $0$\\\hline
$0$ & $B$ & $I_{n}$ & $0$\\
$0$ & $I_{n}$ & $0$ & $0$\\\hline
$0$ & $0$ & $0$ & $I_{(i-1)n}$%
\end{tabular}
\ \right]  ,\ \ \ i=1:k-1, \label{mis}%
\end{equation}
\[
M_{-i}(B):=\left[
\begin{tabular}
[c]{c|cc|c}%
$I_{(k-i-1)n}$ & $0$ & $0$ & $0$\\\hline
$0$ & $0$ & $I_{n}$ & $0$\\
$0$ & $I_{n}$ & $B$ & $0$\\\hline
$0$ & $0$ & $0$ & $I_{(i-1)n}$%
\end{tabular}\ \ \ \right] \ \ \ i=1:k-1,
\]
and
\[
M_{-0}(B):=M_0(B)^{-1} \quad \mbox{and} \quad M_{k}(B):=M_{-k}(B)^{-1}.
\]
assuming that $B$ is nonsingular.
\end{definition}
Notice that   the notation $-0$ does not have the usual meaning, that is, in this case $-0 \neq 0$.

\begin{remark}
Notice that, for $i=1:k-1$, the elementary matrices $M_i(B)$ and $M_{-i}(B)$ are nonsingular for any $B$.
Furthermore, $(M_i(B))^{-1}=M_{-i}(-B)$. 
On the other hand, the matrices $M_{0}(B)$ and $M_{-k}(B)$ are nonsingular if and only if $B$ is nonsingular.
\end{remark}

\begin{definition}{\rm \cite[Definition 4.6]{Her-GFPR}}
Let $\mathbf{t}=(i_1,i_2,\hdots,i_r)$ be an index tuple with indices contained in $\{-k:k-1\}$ and let $\mathcal{Z}=(Z_1,\hdots,Z_r)$ be a list of $r$ arbitrary $n\times n$ matrices.
We define
\[
M_\mathbf{t}(\mathcal{Z}):=M_{i_1}(Z_1)M_{i_2}(Z_2)\cdots M_{i_r}(Z_r),
\]
and say that $\mathcal{Z}$ is a \emph{matrix assignment for $\mathbf{t}$}. 
 If $\mathbf{t}$ (and therefore $\mathcal{Z}$) is empty, then $M_\mathbf{t}(\mathcal{Z}) := I_{kn}$.
 The matrix assignment $\mathcal{Z}$ for $\mathbf{t}$ is said to be \emph{nonsingular} if the matrices assigned to the positions in $\mathbf{t}$ occupied by the $ 0$ and $- k$ indices are nonsingular.
 If the matrices in $\mathcal{Z}$ are symmetric (resp. Hermitian), then $\mathcal{Z}$ is said to be a \emph{symmetric (resp. Hermitian) matrix assignment} for $\mathbf{t}$.
\end{definition}

Given an ordered list of $n\times n$ arbitrary matrices $\mathcal{Z}=(Z_1,\hdots,Z_r)$, we denote by $\rev(\mathcal{Z})$ the list of matrices $(Z_r,\hdots,Z_1)$.

Given a  matrix polynomial $P(\lambda)=\sum_{i=0}^k A_i \lambda^i\in\mathbb{F}[\lambda]^{n\times n}$, we will use the following abbreviated notation:
\[
M_i^P := M_i(-A_i), \quad i=0:k-1,
\]
and
\[
M_{-i}^P:=M_{-i}(A_i), \quad i=1:k.
\]
When the polynomial $P(\lambda)$ is understood from the context, we simply write $M_i$ and $M_{-i}$, instead of $M_i^P$ and $M_{-i}^P$ to simplify the notation.


\subsection{Block-symmetric GFP}

Here we recall the block-symmetric strong linearizations of a matrix polynomial $P(\lambda)$ in the family of generalized Fiedler pencils (GFP).
The following GFP was introduced in \cite[Theorem 3.1]{GFP}:
 \begin{equation*}
\mathcal{T}_P(\lambda):=\lambda M^P_{-1,-3,\ldots,-k+2,-k}-M^P_{0,2,\ldots,k-1},
\end{equation*}
if $k$ is odd, and 
\[
\mathcal{T}_P(\lambda):=\lambda M^P_{-1,-3,\ldots,-k+1}-M^P_{0,2,\ldots,k},
\]
if $k$ is even and the leading coefficient $A_k$ is nonsingular. 
The pencil $\mathcal{T}_P(\lambda)$ is explicitly given by
\begin{equation}\label{GFPT}
\mathcal{T}_P(\lambda)= \left[ \begin{array}{ccccccc} \lambda A_k + A_{k-1} & -I_n &&&&& \\
-I_n & 0 & \lambda I_n &&&&\\  & \lambda I_n & \lambda A_{k-2} + A_{k-3} & -I_n &&&\\ & & -I_n &\ddots &&& \\
&&&& -I_n & & \\
&&& -I_n & 0 & \lambda I_n &\\
&&&& \lambda I_n & \lambda A_1+ A_0
\end{array} \right],
\end{equation}
when $k$ is odd, and by

\[
\mathcal{T}_P(\lambda)= \left[ \begin{array}{ccccccc} -A_k^{-1}  &  \lambda I_n &&&&& \\
\lambda I_n & \lambda A_{k-1}+A_{k-2}  & - I_n &&&&\\  & - I_n & 0 &  \lambda I_n &&&\\ & & \lambda I_n &\ddots &&& \\
&&&& -I_n & & \\
&&& -I_n & 0 & \lambda I_n &\\
&&&& \lambda I_n & \lambda A_1+ A_0
\end{array} \right],
\]
when $k$ is even and $A_k$ is nonsingular.  
We note  that this pencil is not a companion form since one of its matrix coefficients contains a block equal to $A_k^{-1}$. 
Notice that $\mathcal{T}_P(\lambda)$ is block-symmetric, regardless of the parity of $k$. 
Some small variations of these pencils can be found in \cite{4m-alt}. 

\subsection{Block-symmetric GFPR}

Next, we recall a subfamily of GFPR comprised  of block-symmetric pencils.
\begin{definition}\label{def:sym-GFPR}{\rm \cite{Her-GFPR}}
Let $P(\lambda)=\sum_{i=0}^kA_i\lambda^i\in\mathbb{F}[\lambda]^{n\times n}$ of degree $k$, and let $h$ be an integer such that $0\leq h<k$.
Let $\mathbf{w}_h$ and $k+\mathbf{v}_h$ be the admissible tuples associated with $h$ and $k-h-1$, respectively, and let $\mathbf{c}_h$ and $\mathbf{c}_{k-h-1}$ be the symmetric complements of $\mathbf{w}_h$ and $k+\mathbf{v}_h$, respectively.
Let $\mathbf{t}_w$ and $k+\mathbf{t}_v$ be index tuples with indices from $\{0:h-1\}$ and $\{0:k-h-2\}$, respectively, such that $(\mathbf{t}_w,\mathbf{w}_h,\mathbf{c}_h,\rev(\mathbf{t}_w))$ and $(\mathbf{t}_v,\mathbf{v}_h,-k+\mathbf{c}_{k-h-1},\rev(\mathbf{t}_v))$ satisfy the SIP.
Let $\mathcal{Z}_w$ and $\mathcal{Z}_v$ be matrix assignments for $\mathbf{t}_w$ and $\mathbf{t}_v$, respectively.
Then, the pencil 
\begin{equation}\label{symGFPR}
M_{\mathbf{t}_w,\mathbf{t}_v}(\mathcal{Z}_w,\mathcal{Z}_v)(\lambda M^P_{\mathbf{v}_h}-M^P_{\mathbf{w}_h})M^P_{-k+\mathbf{c}_{k-h-1},\mathbf{c}_h}M_{\rev(\mathbf{t}_w),\rev(\mathbf{t}_v)}(\rev(\mathcal{Z}_w),\rev(\mathcal{Z}_v))
\end{equation}
is a  \emph{block-symmetric generalized Fiedler pencil with repetition (block-symmetric GFPR)} and we denote it  by $L_P(h,\mathbf{t}_w,\mathbf{t}_v,\mathcal{Z}_w,\mathcal{Z}_v)$.
If the matrix assignments $\mathcal{Z}_w$ and $\mathcal{Z}_v$ are chosen so that $M_{\mathbf{t}_w,\mathbf{t}_v}(\mathcal{Z}_w,\mathcal{Z}_v)=M_{\mathbf{t}_w,\mathbf{t}_v}^P$, then the block-symmetric GFPR $L_P(h,\mathbf{t}_w,\mathbf{t}_v,\mathcal{Z}_w,\mathcal{Z}_v)$ is a  \emph{block-symmetric Fiedler pencil with repetition (block-symmetric FPR)}, which we denote by $L_P(h,\mathbf{t}_w,\mathbf{t}_v)$.
\end{definition}

Theorem \ref{thm:GFPR linearization} establishes when a block-symmetric GFPR associated with a  matrix polynomial $P(\lambda)$ is a strong linearization of $P(\lambda)$.
\begin{theorem}{\rm \cite[Theorem 4.9]{Her-GFPR}}\label{thm:GFPR linearization}
Let $P(\lambda)=\sum_{i=0}^kA_i\lambda^i\in\mathbb{F}[\lambda]^{n\times n}$ be a matrix polynomial, and let $\mathcal{L}(\lambda)$ be the block-symmetric GFPR  defined in \eqref{symGFPR}.
If the following three conditions hold
\begin{itemize}
\item[\rm (i)] $\mathcal{Z}_w$ and $\mathcal{Z}_v$ are nonsingular matrix assignments for $\mathbf{t}_w$ and $\mathbf{t}_v$, respectively,
\item[\rm (ii)] $A_0$ is nonsingular if $h$ is odd, and
\item[\rm (iii)] $A_k$ is nonsingular if $k-h$ is even,
\end{itemize}
then, the pencil $\mathcal{L}(\lambda)$ is a strong linearization of $P(\lambda)$.
\end{theorem}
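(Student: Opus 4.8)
# Proof Proposal for Theorem \ref{thm:GFPR linearization}

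The plan is to establish that the block-symmetric GFPR in \eqref{symGFPR} is a strong linearization of $P(\lambda)$ by reducing it, via the equivalence operations already developed, to a pencil whose linearization properties are known. First I would observe that the pencil \eqref{symGFPR} has the form $Q(\lambda) \cdot \big(\lambda M^P_{\mathbf{v}_h} - M^P_{\mathbf{w}_h}\big) \cdot R(\lambda)$, where $Q(\lambda) = M_{\mathbf{t}_w,\mathbf{t}_v}(\mathcal{Z}_w,\mathcal{Z}_v)$ and $R(\lambda) = M^P_{-k+\mathbf{c}_{k-h-1},\mathbf{c}_h} M_{\rev(\mathbf{t}_w),\rev(\mathbf{t}_v)}(\rev(\mathcal{Z}_w),\rev(\mathcal{Z}_v))$ are products of elementary matrices. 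Under hypotheses (i)--(iii), each elementary factor appearing in $Q(\lambda)$ and $R(\lambda)$ is a nonsingular \emph{constant} matrix: the factors $M_{\pm i}(B)$ for $i = 1:k-1$ are always nonsingular, the factors corresponding to indices in $\mathbf{c}_h$ and $\mathbf{c}_{k-h-1}$ are of this benign type by the structure of the symmetric complements, the factor $M_0^P = M_0(-A_0)$ is nonsingular precisely when $A_0$ is (needed only when $h$ is odd, by (ii)), and the factor $M_{-k}^P = M_{-k}(A_k)$ is nonsingular precisely when $A_k$ is (needed only when $k-h$ is even, by (iii)). Hence $Q(\lambda) = Q$ and $R(\lambda) = R$ are nonsingular constant matrices, and strict equivalence reduces the problem to showing that the \emph{core pencil} $\lambda M^P_{\mathbf{v}_h} - M^P_{\mathbf{w}_h}$ is a strong linearization of $P(\lambda)$.

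Next I would identify the core pencil $\lambda M^P_{\mathbf{v}_h} - M^P_{\mathbf{w}_h}$ as a block-symmetric GFP. Indeed, the admissible tuples $\mathbf{w}_h$ and $k + \mathbf{v}_h$ are exactly designed so that $M^P_{\mathbf{w}_h}$ and $M^P_{\mathbf{v}_h}$ yield, up to a block-permutation, a generalized Fiedler pencil of $P(\lambda)$; when $h$ and $k-h-1$ take the extreme values this specializes to the pencils $\mathcal{T}_P(\lambda)$ recalled in \eqref{GFPT}, and for intermediate $h$ one obtains the general block-symmetric GFP, which is known to be a strong linearization of $P(\lambda)$ for odd $k$ unconditionally and for even $k$ when $A_k$ is nonsingular (the latter covered by hypothesis (iii) in the relevant parity case). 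I would invoke the established linearization theory for generalized Fiedler pencils --- equivalently, one can verify directly that after a suitable block-permutation the core pencil becomes an extended block Kronecker pencil whose body satisfies the AS condition for $P(\lambda)$, so Theorem \ref{thm:linearizationBKP_mod} applies. Either route gives that the core pencil has the same finite and infinite elementary divisors as $P(\lambda)$, and the same number of left and right minimal indices when $P(\lambda)$ is singular.

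Finally I would assemble the pieces: since $\mathcal{L}(\lambda) = Q \,\big(\lambda M^P_{\mathbf{v}_h} - M^P_{\mathbf{w}_h}\big)\, R$ with $Q, R$ nonsingular and constant, $\mathcal{L}(\lambda)$ is strictly equivalent to the core pencil. Strict equivalence preserves the complete eigenstructure --- finite elementary divisors, infinite elementary divisors, and the numbers of right and left minimal indices --- so $\mathcal{L}(\lambda)$ is a strong linearization of $P(\lambda)$ as claimed. The main obstacle I anticipate is the bookkeeping in the second step: one must verify carefully that, with the index tuples built from admissible tuples and their symmetric complements together with the SIP-satisfying tuples $\mathbf{t}_w, \mathbf{t}_v$, the factors separating out of \eqref{symGFPR} are precisely the constant nonsingular elementary matrices and that exactly the nonsingularity of $A_0$ (when $h$ odd) and $A_k$ (when $k-h$ even) is what is required --- no more, no less. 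This amounts to tracking which indices $0$ and $-k$ actually occur in the various tuples and in the matrix assignments, and is exactly the sort of technical verification that (as the authors note) mirrors the argument for Theorem 8.1 in \cite{canonical_Fiedler} and is deferred to the appendix.
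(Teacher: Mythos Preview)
The paper does not actually prove Theorem~\ref{thm:GFPR linearization}: it is quoted verbatim from \cite[Theorem 4.9]{Her-GFPR} and stated without proof. So there is no ``paper's own proof'' to compare against; the appendix you allude to in your final paragraph is the proof of Theorem~\ref{thm:main_GFPR}, not of this statement.

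That said, your proposed argument is essentially the standard one and is correct in outline. The decomposition $\mathcal{L}(\lambda) = Q\,(\lambda M_{\mathbf{v}_h}^P - M_{\mathbf{w}_h}^P)\,R$ with $Q,R$ constant is exactly right, and your accounting of when $0$ and $-k$ occur among the indices --- hence when $A_0$ and $A_k$ must be nonsingular --- is accurate: $0\in\mathbf{c}_h$ iff $h$ is odd, and $-k\in -k+\mathbf{c}_{k-h-1}$ iff $k-h$ is even; condition~(i) handles possible occurrences of $0$ in $\mathbf{t}_w$ and $-k$ in $\mathbf{t}_v$. The core pencil $\lambda M_{\mathbf{v}_h}^P - M_{\mathbf{w}_h}^P$ is a proper generalized Fiedler pencil (since $\mathbf{w}_h$ is a permutation of $\{0{:}h\}$ and $\mathbf{v}_h$ of $\{-k{:}{-}h{-}1\}$), and these are known to be strong linearizations of $P(\lambda)$ unconditionally, for regular and singular $P$ alike; strict equivalence then finishes the job.

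Two small corrections to tighten the write-up: first, your sentence ``the factors corresponding to indices in $\mathbf{c}_h$ and $\mathbf{c}_{k-h-1}$ are of this benign type'' is not quite right, since $0\in\mathbf{c}_h$ when $h$ is odd and $0\in\mathbf{c}_{k-h-1}$ when $k-h$ is even --- you correct yourself in the very next clause, but the phrasing should be cleaned up. Second, drop the reference to the appendix and to Theorem~8.1 of \cite{canonical_Fiedler}: those concern a different (and much harder) statement, and the bookkeeping you actually need here is the two-line parity check you have already done.
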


Theorem \ref{thm:GFPR symmetric}  gives sufficient conditions for a block-symmetric GFPR associated with a symmetric (resp. Hermitian) matrix polynomial $P(\lambda)$ to be symmetric (resp. Hermitian).
\begin{theorem}{\rm \cite{Her-GFPR}}\label{thm:GFPR symmetric}
Let $P(\lambda)$ be a symmetric (resp. Hermitian) matrix polynomial, and let $\mathcal{L}(\lambda)$ be the block-symmetric GFPR  defined in \eqref{symGFPR}.
If the matrix assignments $\mathcal{Z}_w$ and $\mathcal{Z}_v$ are symmetric (resp. Hermitian), then $\mathcal{L}(\lambda)$ is symmetric (resp. Hermitian).
\end{theorem}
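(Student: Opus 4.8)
The plan is to lean on the fact, already encoded in Definition~\ref{def:sym-GFPR} and proved in \cite{FPR1,Her-GFPR}, that $\mathcal{L}(\lambda)$ is block-symmetric, i.e. $\mathcal{L}(\lambda)^{\mathcal B}=\mathcal{L}(\lambda)$. Granting this, it suffices to show that, under the stated hypotheses, block-transposition and (conjugate) transposition act \emph{identically} on $\mathcal{L}(\lambda)$. The starting point is a remark about the elementary matrices alone: for every $n\times n$ matrix $B$ and every index $i\in\{-k:k\}$ one has $M_i(B)^{\mathcal B}=M_i(B)$, since $M_0(B)$ and $M_{-k}(B)$ are block-diagonal, while for $i=1:k-1$ the only non-block-diagonal entries of $M_i(B)$ and of $M_{-i}(B)$ form a block-transpose-symmetric pair of $I_n$'s. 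On the other hand, regarding $M_i(B)$ as an ordinary $kn\times kn$ matrix, one checks from the same block picture that $M_i(B)^{T}=M_i(B^{T})$ and $M_i(B)^{*}=M_i(B^{*})$. Hence, as soon as $B$ is symmetric (resp. Hermitian), $M_i(B)^{T}=M_i(B)=M_i(B)^{\mathcal B}$ (resp. $M_i(B)^{*}=M_i(B)=M_i(B)^{\mathcal B}$); and trivially $(\lambda I_{kn})^{\mathcal B}=(\lambda I_{kn})^{T}=(\lambda I_{kn})^{*}=\lambda I_{kn}$.

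Next I would record the (routine, but essential) structural bookkeeping: reading off the index tuples $\mathbf{w}_h$, $\mathbf{v}_h$, $\mathbf{c}_h$, $\mathbf{c}_{k-h-1}$, $\mathbf{t}_w$, $\mathbf{t}_v$ together with the convention $M_i^P=M_i(-A_i)$ and $M_{-i}^P=M_{-i}(A_i)$, the elementary matrices actually occurring in the product \eqref{symGFPR} are exactly $M_i(-A_i)$ for $i=0:h$, $M_{-i}(A_i)$ for $i=h+1:k$, and the matrices $M_i(Z)$ with $Z$ ranging over the matrix assignments $\mathcal{Z}_w$ and $\mathcal{Z}_v$; in particular no inverse-type factor $M_{-0}(B)$ or $M_k(B)$ appears, so no nonsingularity assumption is needed for this theorem. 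Under the hypothesis that $P(\lambda)$ is symmetric (resp. Hermitian) and $\mathcal{Z}_w,\mathcal{Z}_v$ are symmetric (resp. Hermitian) matrix assignments, every one of these arguments ($-A_i$, $A_i$, and the entries of $\mathcal{Z}_w$ and $\mathcal{Z}_v$) is symmetric (resp. Hermitian), so by the previous paragraph every elementary matrix factor of $\mathcal{L}(\lambda)$ is fixed both by block-transposition and by (conjugate) transposition.

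To conclude, let $\dagger$ denote $(\cdot)^{T}$ in the symmetric case and $(\cdot)^{*}$ in the Hermitian case, acting coefficientwise on a matrix polynomial, so that ``$\mathcal{L}(\lambda)$ is symmetric (resp. Hermitian)'' means exactly $\mathcal{L}(\lambda)^{\dagger}=\mathcal{L}(\lambda)$. Both $(\cdot)^{\mathcal B}$ and $(\cdot)^{\dagger}$ are additive, commute with multiplication by $\lambda$, and reverse products; therefore the set of matrix polynomials on which these two maps agree is closed under addition, negation, multiplication by $\lambda$, and multiplication of matrices. Since $\mathcal{L}(\lambda)$ is obtained from $\lambda I_{kn}$ and the elementary matrices listed above using only those operations (matrix products, the sign change in $\lambda M^P_{\mathbf{v}_h}-M^P_{\mathbf{w}_h}$, and multiplication by $\lambda$), and since $(\cdot)^{\mathcal B}$ and $(\cdot)^{\dagger}$ agree on all those generators, they agree on $\mathcal{L}(\lambda)$: $\mathcal{L}(\lambda)^{\dagger}=\mathcal{L}(\lambda)^{\mathcal B}$. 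Combining with the known block-symmetry $\mathcal{L}(\lambda)^{\mathcal B}=\mathcal{L}(\lambda)$ gives $\mathcal{L}(\lambda)^{\dagger}=\mathcal{L}(\lambda)$, which is the claim.

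The only genuinely delicate step I anticipate is the bookkeeping in the second paragraph: confirming from the definitions that every elementary factor in \eqref{symGFPR} really does carry a symmetric (resp. Hermitian) argument and that no $M_{-0}$ or $M_k$ appears; the rest is formal, being just the observation that two product-reversing, $\lambda$-linear maps which agree on a generating set agree everywhere it generates. An alternative that avoids invoking block-symmetry as a black box would be to prove directly that $(\cdot)^{\mathcal B}$ fixes $\mathcal{L}(\lambda)$, using the palindromic shape $(\mathbf{t}_w,\mathbf{t}_v,\dots,\rev(\mathbf{t}_w),\rev(\mathbf{t}_v))$ of the index list and the commutation of $M_{\mathbf{t}_w}(\mathcal{Z}_w)$ with $M_{\mathbf{t}_v}(\mathcal{Z}_v)$; but that merely reproves a result already available in \cite{FPR1,Her-GFPR}, so I would simply cite it.
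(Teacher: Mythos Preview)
The paper does not prove this statement; it is quoted from \cite{Her-GFPR} and no argument is given in the present paper. Your proposal, however, is a correct and self-contained proof: the key observations that each elementary matrix $M_i(B)$ is block-symmetric and satisfies $M_i(B)^T=M_i(B^T)$ (resp.\ $M_i(B)^*=M_i(B^*)$), together with the fact that both $(\cdot)^{\mathcal B}$ and $(\cdot)^\dagger$ are additive, $\lambda$-linear, and product-reversing, do yield $\mathcal L(\lambda)^\dagger=\mathcal L(\lambda)^{\mathcal B}$, and the cited block-symmetry finishes the job. Your bookkeeping that no $M_{-0}$ or $M_k$ factor occurs is also correct, since all indices in the tuples $\mathbf{t}_w,\mathbf{w}_h,\mathbf{c}_h$ lie in $\{0:h\}\subset\{0:k-1\}$ and all indices in $\mathbf{t}_v,\mathbf{v}_h,-k+\mathbf{c}_{k-h-1}$ lie in $\{-k:-(h+1)\}\subset\{-k:-1\}$.
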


\section{Block-symmetric GFP and GFPR as block-symmetric block minimal bases pencils}\label{sec:main-GFP}

 In this section, we start by showing that the block-symmetric pencil $\mathcal{T}_P(\lambda)$ associated with an odd-degree matrix polynomial $P(\lambda)$ is permutationally block-congruent to the pencil $\mathcal{O}_1^P(\lambda)$. 
The  case when $P(\lambda)$ has even degree is not considered, since $\mathcal{T}_P(\lambda)$ is not a companion  form, that
is, if the matrix coefficients of $\mathcal{T}_P(\lambda)$ are seen as block matrices, one of them contains a block  that is not of the form $0$, $\pm I_n$ or $\pm A_i$.
In fact, since  one of the matrix coefficients of  $\mathcal{T}_P(\lambda)$ contains a block-entry that is the inverse of $A_k$, the interest of this pencil in applications is very limited.

\begin{theorem}\label{th;TP}
Let $P(\lambda)=\sum_{i=0}^k A_i\lambda^i\in\mathbb{F}[\lambda]^{n\times n}$ be an odd-degree matrix polynomial.
Let $\mathcal{T}_P(\lambda)$ be the block-symmetric GFP associated with $P(\lambda)$ defined in \eqref{GFPT}. 
Let $\mathbf{c}$ be the permutation of $\{1:k\}$ given by $(1, 3, 5, \ldots, k, 2, 4, \ldots, k-1)$.
Then,
\[
(\Pi_\mathbf{c}^n)^\mathcal{B}\mathcal{T}_P(\lambda)\Pi_\mathbf{c}^n = \mathcal{O}_1^P(\lambda).
\]
In other words, modulo block-permutations, the block-symmetric GFP $\mathcal{T}_P(\lambda)$ belongs to the family $\langle
\mathcal{O}_1^P \rangle$.
\end{theorem}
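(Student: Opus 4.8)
The plan is to establish the identity $(\Pi_\mathbf{c}^n)^\mathcal{B}\mathcal{T}_P(\lambda)\Pi_\mathbf{c}^n = \mathcal{O}_1^P(\lambda)$ by a direct block-entry comparison, exploiting the fact that a congruence by a block-permutation matrix acts on block-entries by simultaneously relabelling block-rows and block-columns. Writing $\Pi_\mathbf{c}^n$ for the block-permutation matrix associated with the permutation $\mathbf{c}$ (so $\Pi_\mathbf{c}^n = P_\mathbf{c}\otimes I_n$ with $P_\mathbf{c}$ the permutation matrix of $\mathbf{c}$), one has $\bigl[(\Pi_\mathbf{c}^n)^\mathcal{B}\mathcal{T}_P(\lambda)\Pi_\mathbf{c}^n\bigr]_{pq} = [\mathcal{T}_P(\lambda)]_{\mathbf{c}(p),\mathbf{c}(q)}$ for all $1\le p,q\le k$, so the statement reduces to checking that this block-entry coincides with the corresponding block-entry of $\mathcal{O}_1^P(\lambda)$.

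First I would record, from \eqref{GFPT}, the explicit block-entries of $\mathcal{T}_P(\lambda)$: for odd $k$ it is block-tridiagonal of size $k$, with diagonal block-entry $\lambda A_{k-2i+2}+A_{k-2i+1}$ in each odd position $2i-1$ ($i=1:s+1$) and $0$ in each even position, and with off-diagonal block-entries $[\mathcal{T}_P]_{2i-1,2i}=[\mathcal{T}_P]_{2i,2i-1}=-I_n$ and $[\mathcal{T}_P]_{2i,2i+1}=[\mathcal{T}_P]_{2i+1,2i}=\lambda I_n$ for $i=1:s$, where $s=(k-1)/2$. Next I would write out the permutation $\mathbf{c}=(1,3,\ldots,k,2,4,\ldots,k-1)$ explicitly as $\mathbf{c}(p)=2p-1$ for $1\le p\le s+1$ and $\mathbf{c}(p)=2(p-s-1)$ for $s+2\le p\le k$; that is, $\mathbf{c}$ lists the odd old positions (which carry the coefficient blocks of $P(\lambda)$) before the even old positions (which carry zeros), matching the block-partition of $\mathcal{O}_1^P(\lambda)$ in \eqref{eq:O1} into an $(s+1)$-block part followed by an $s$-block part.

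The verification then splits into four cases according to the parities of $\mathbf{c}(p)$ and $\mathbf{c}(q)$. If $p,q\le s+1$, then $\mathbf{c}(p),\mathbf{c}(q)$ are both odd, so $[\mathcal{T}_P]_{\mathbf{c}(p),\mathbf{c}(q)}$ is nonzero only for $p=q$, where it equals $\lambda A_{k-2p+2}+A_{k-2p+1}=[M(\lambda;P)]_{pp}$ by \eqref{eq:M(lambda,P)}, reproducing the $(1,1)$ block of $\mathcal{O}_1^P(\lambda)$. If $p,q>s+1$, both $\mathbf{c}(p),\mathbf{c}(q)$ are even, so $[\mathcal{T}_P]_{\mathbf{c}(p),\mathbf{c}(q)}$ vanishes for $p\neq q$ and equals $0$ for $p=q$, reproducing the trailing zero block. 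If $p\le s+1<q$, set $\ell:=q-s-1\in\{1,\ldots,s\}$, so $\mathbf{c}(p)=2p-1$ and $\mathbf{c}(q)=2\ell$; then $[\mathcal{T}_P]_{2p-1,2\ell}$ equals $-I_n$ if $p=\ell$, equals $\lambda I_n$ if $p=\ell+1$, and is $0$ otherwise, which is exactly $[K_s(\lambda)^T]_{p,\ell}$ because $K_s(\lambda)=L_s(\lambda)\otimes I_n$ is bidiagonal with $-I_n$ on the diagonal and $\lambda I_n$ on the superdiagonal. The remaining case $q\le s+1<p$ follows from the block-symmetry of both $\mathcal{T}_P(\lambda)$ and $\mathcal{O}_1^P(\lambda)$ and yields the block $K_s(\lambda)$. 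This proves the identity; the final assertion is then immediate, since $\mathcal{O}_1^P(\lambda)$ is the member of $\langle\mathcal{O}_1^P\rangle$ obtained by setting $B=I_{sn}$ and $C=0$ in \eqref{eq:first_family}, and $\Pi_\mathbf{c}^n$ is a block-permutation matrix.

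The only real obstacle is bookkeeping: keeping the two indexing conventions straight (old block positions $1:k$ in $\mathcal{T}_P(\lambda)$ versus the partition of $\mathcal{O}_1^P(\lambda)$) and making sure the $\pm I_n$ and $\lambda I_n$ entries of the tridiagonal pencil land in precisely the slots that reconstruct the bidiagonal shapes of $K_s(\lambda)$ and $K_s(\lambda)^T$. Once the map $p\mapsto\mathbf{c}(p)$ is written out, each of the four cases is a one-line index check, with no analytic content beyond matching indices.
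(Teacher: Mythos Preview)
Your proof is correct and follows essentially the same approach as the paper's own proof, which simply says the result is easily checked by performing the matrix product $(\Pi_\mathbf{c}^n)^\mathcal{B}\mathcal{T}_P(\lambda)\Pi_\mathbf{c}^n$ using the explicit expression for $\mathcal{T}_P(\lambda)$. You have merely spelled out the block-entry bookkeeping of that direct verification in full detail.
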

\begin{proof}
Using the explicit expression for $\mathcal{T}_P(\lambda)$ presented in Section \ref{sec:GFPR-def}, the result is easily checked by performing the matrix product $(\Pi_\mathbf{c}^n)^\mathcal{B}\mathcal{T}_P(\lambda)\Pi_\mathbf{c}^n$. 
\end{proof}

Now we give the main result for  block-symmetric GFPR associated with a matrix polynomial $P(\lambda)$, that  is, we state that, up to permutations of block-rows and block-columns, every block-symmetric GFPR is  in one of the four block-symmetric families  introduced in Section \ref{sec:four-families}. 
This result is stated in Theorem \ref{thm:main_GFPR}.
Its proof is included in the Appendix.
\begin{theorem}\label{thm:main_GFPR}
Let $P(\lambda)=\sum_{i=0}^k A_i\lambda^i\in\mathbb{F}[\lambda]^{n\times n}$ and let $s=(k-1)/2$ if $k$ is odd, or $s=(k-2)/2$ is $k$ is even.
Let $L_P(h,\mathbf{t}_w,\mathbf{t}_v,\mathcal{Z}_w,\mathcal{Z}_v)$ be the block-symmetric GFPR associated with $P(\lambda)$ given in (\ref{symGFPR}).
Then, there exists a permutation $\mathbf{c}$ of $\{1:k\}$  such that
\[
(\Pi_\mathbf{c}^n)^{\mathcal{B}} L_P(h,\mathbf{t}_w,\mathbf{t}_v,\mathcal{Z}_w,\mathcal{Z}_v)\Pi_\mathbf{c}^n\in\left\{ \begin{array}{l} 
\langle \mathcal{O}_1^P \rangle \mbox{ if $k$ is odd and $h$ is even},\\
\langle \mathcal{O}_2^P \rangle \mbox{ if $k$ and $h$ are odd},\\
\langle \mathcal{E}_1^P \rangle \mbox{ if $k$ is even and $h$ is odd,}\\
\langle \mathcal{E}_2^P \rangle \mbox{ if $k$ and $h$ are even}.
\end{array} \right.
\]
Furthermore, if the following conditions hold
\begin{itemize}
\item[\rm (i)] $\mathcal{Z}_w$ and $\mathcal{Z}_v$ are nonsingular matrix assignments for $\mathbf{t}_w$ and $\mathbf{t}_v$, respectively,
\item[\rm (ii)] $A_0$ is nonsingular if $h$ is odd, and
\item[\rm (iii)] $A_k$ is nonsingular if $k-h$ is even,
\end{itemize}
then $(\Pi_\mathbf{c}^n)^{\mathcal{B}} L_P(h,\mathbf{t}_w,\mathbf{t}_v,\mathcal{Z}_w,\mathcal{Z}_v)\Pi_\mathbf{c}^n$ is a strong linearization of $P(\lambda)$ and the following statements hold:
\begin{itemize}
\item[\rm (a)] If $(\Pi_\mathbf{c}^n)^{\mathcal{B}} L_P(h,\mathbf{t}_w,\mathbf{t}_v,\mathcal{Z}_w,\mathcal{Z}_v)\Pi_\mathbf{c}^n\in\langle \mathcal{O}_1^P \rangle$ is as in \eqref{eq:first_family}, then $B$ and $B^\mathcal{B}$ are nonsingular. 
\item[\rm (b)] If $(\Pi_\mathbf{c}^n)^{\mathcal{B}} L_P(h,\mathbf{t}_w,\mathbf{t}_v,\mathcal{Z}_w,\mathcal{Z}_v)\Pi_\mathbf{c}^n\in\langle \mathcal{O}_2^P \rangle$ is as in \eqref{eq:second_family}, then $E$ and $E^\mathcal{B}$ are nonsingular.
\item[\rm (c)] If $(\Pi_\mathbf{c}^n)^{\mathcal{B}} L_P(h,\mathbf{t}_w,\mathbf{t}_v,\mathcal{Z}_w,\mathcal{Z}_v)\Pi_\mathbf{c}^n\in\langle \mathcal{E}_1	^P \rangle$ is as in \eqref{eq:third_family}, then $D$ and $D^\mathcal{B}$ are nonsingular.
\item[\rm (d)] If $(\Pi_\mathbf{c}^n)^{\mathcal{B}} L_P(h,\mathbf{t}_w,\mathbf{t}_v,\mathcal{Z}_w,\mathcal{Z}_v)\Pi_\mathbf{c}^n\in\langle \mathcal{E}_2^P \rangle$ is as in \eqref{eq:fourth_family}, then $D$ and $D^\mathcal{B}$ are nonsingular.
\end{itemize}
\end{theorem}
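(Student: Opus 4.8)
The plan is to follow, step by step, the proof of Theorem~8.1 in \cite{canonical_Fiedler}, but using only operations that respect block-symmetry. Write the block-symmetric GFPR \eqref{symGFPR} as a product $X_L\,S\,X_R$, where $S:=(\lambda M^P_{\mathbf{v}_h}-M^P_{\mathbf{w}_h})M^P_{-k+\mathbf{c}_{k-h-1},\mathbf{c}_h}$ is its ``skeleton'', $X_L:=M_{\mathbf{t}_w,\mathbf{t}_v}(\mathcal{Z}_w,\mathcal{Z}_v)$, and $X_R:=M_{\rev(\mathbf{t}_w),\rev(\mathbf{t}_v)}(\rev(\mathcal{Z}_w),\rev(\mathcal{Z}_v))$. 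Since conjugation by a block-permutation distributes over this product, it suffices to exhibit a single block-permutation $\Pi^n_{\mathbf c}$ for which three things hold: (i) $(\Pi^n_{\mathbf c})^{\mathcal B} S\,\Pi^n_{\mathbf c}$ equals $\mathcal O_1^P(\lambda)$, $\mathcal O_2^P(\lambda)$, $\mathcal E_1^P(\lambda)$ or $\mathcal E_2^P(\lambda)$ according to the parities of $k$ and $h$; (ii) $(\Pi^n_{\mathbf c})^{\mathcal B} X_L\,\Pi^n_{\mathbf c}$ is block-triangular of the shape appearing in Remark~\ref{O1-struct} (or, for the other three families, as in \eqref{gen-2}, \eqref{gen-3}, \eqref{eq:equivalence for E2}); and (iii) $(\Pi^n_{\mathbf c})^{\mathcal B} X_R\,\Pi^n_{\mathbf c}$ is exactly the block-transpose of the matrix in (ii). Combining these identifies $(\Pi^n_{\mathbf c})^{\mathcal B} L_P(h,\mathbf t_w,\mathbf t_v,\mathcal Z_w,\mathcal Z_v)\Pi^n_{\mathbf c}$ with a member of the corresponding family.

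For step (i) I would compute the product of elementary matrices defining $S$ blockwise, using the explicit descriptions of $\mathbf w_h$, $\mathbf v_h$ and of their symmetric complements $\mathbf c_h$, $\mathbf c_{k-h-1}$ from Section~\ref{sec:GFPR-def}, tracking where the blocks $\pm A_i$, $\pm I_n$ and $0$ come to rest. The parity of $k$ decides whether we land among the odd-degree families ($\mathcal O_1^P,\mathcal O_2^P$) or the even-degree ones ($\mathcal E_1^P,\mathcal E_2^P$); the parity of $h$ decides whether a corner block $-\lambda A_0$ (produced by a $0$-index in the $\mathbf w$-part, present iff $h$ is odd) and/or a corner block $-A_k$ (produced by a $-k$-index in the $\mathbf v$-part, present iff $k-h$ is even) appears, which is precisely what distinguishes $\mathcal O_2^P$ from $\mathcal O_1^P$ and $\mathcal E_2^P$ from $\mathcal E_1^P$. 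The permutation $\mathbf c$ that sorts $S$ into this canonical shape is read off from the positions of the strings inside $\mathbf w_h$ and $\mathbf v_h$; the base case $h=0$ is closely related to the pencil $\mathcal T_P(\lambda)$ treated in Theorem~\ref{th;TP}, and the general computation generalizes that one.

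For steps (ii) and (iii), the SIP hypotheses on $(\mathbf t_w,\mathbf w_h,\mathbf c_h,\rev(\mathbf t_w))$ and $(\mathbf t_v,\mathbf v_h,-k+\mathbf c_{k-h-1},\rev(\mathbf t_v))$, together with the fact that the indices of $\mathbf t_w$ lie in $\{0:h-1\}$ and those of $k+\mathbf t_v$ in $\{k:2k-h-2\}$, guarantee that after conjugation by $\Pi^n_{\mathbf c}$ the factor $X_L$ becomes block upper-triangular with identity diagonal blocks outside the ``Kronecker part'', i.e. of the form $\left[\begin{smallmatrix} I & C\\ 0 & B\end{smallmatrix}\right]$ (refined as in \eqref{gen-2}/\eqref{gen-3}/\eqref{eq:equivalence for E2} for the remaining families); along the way one records the resulting expressions for $B,C$ (resp. $B,C,D,E$) in terms of $\mathcal Z_w,\mathcal Z_v$. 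Because $X_R$ is obtained from $X_L$ by reversing the order of the elementary factors and reversing the matrix lists — which is exactly the mechanism that makes \eqref{symGFPR} block-symmetric — one gets $(\Pi^n_{\mathbf c})^{\mathcal B} X_R\,\Pi^n_{\mathbf c}=\big((\Pi^n_{\mathbf c})^{\mathcal B} X_L\,\Pi^n_{\mathbf c}\big)^{\mathcal B}$, and conjugating the (block-symmetric) skeleton by this pair produces precisely the parametrized forms \eqref{eq:first_family}, \eqref{eq:second_family}, \eqref{eq:third_family}, \eqref{eq:fourth_family}.

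For the ``furthermore'' part, under hypotheses (i)--(iii) Theorem~\ref{thm:GFPR linearization} already gives that $L_P(h,\mathbf t_w,\mathbf t_v,\mathcal Z_w,\mathcal Z_v)$ is a strong linearization of $P(\lambda)$, and $\Pi^n_{\mathbf c}$ is a strict equivalence, so the permuted pencil is one as well; for (a)--(d), the explicit formulas for $B$ (resp. $E$, $D$) from step (ii) express these blocks as products of the elementary-matrix blocks coming from $X_L$ — whose $0$- and $-k$-indexed matrices are nonsingular by hypothesis (i) — together with, for (b)--(d), the corner blocks $A_0$ and/or $A_k$ supplied by $S$, nonsingular by hypotheses (ii)--(iii); hence these products are nonsingular. (As a consistency check, singularity of, say, $B$ in \eqref{eq:first_family} would force a constant left null vector $[\,0\ \ v^{\mathcal B}\,]$ with $v^{\mathcal B}B=0$, hence a spurious left minimal index $0$ of grade-$k$ size, contradicting strong linearization.) The main obstacle is the bookkeeping in steps (i)--(iii): verifying blockwise, case by case on the parities of $k$ and $h$, that the triple product of elementary matrices collapses to the stated structure and that the left and right outer factors stay exact block-transposes of one another after conjugation by $\Pi^n_{\mathbf c}$. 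This is the long and highly technical content deferred to the Appendix; conceptually it is a routine but lengthy manipulation of the index-tuple calculus of Section~\ref{sec:index-tuples} (which elementary factors commute, where the complements $\mathbf c_h$, $\mathbf c_{k-h-1}$ deposit their blocks, and how the sorting permutation $\mathbf c$ acts), following the template of Theorem~8.1 in \cite{canonical_Fiedler}.
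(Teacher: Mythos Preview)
Your decomposition $L_P=X_LSX_R$ with $S$ the simple FPR and $X_L,X_R$ the outer elementary products is natural, and step (iii) is correct: block-symmetry of the construction does force $(\Pi^n_{\mathbf c})^{\mathcal B}X_R\Pi^n_{\mathbf c}=\big((\Pi^n_{\mathbf c})^{\mathcal B}X_L\Pi^n_{\mathbf c}\big)^{\mathcal B}$ for any block-permutation. The gap is in step (ii). You assert that the permutation $\mathbf c$ ``read off from the positions of the strings inside $\mathbf w_h$ and $\mathbf v_h$'' --- i.e.\ the permutation that normalizes the skeleton $S$ --- also conjugates $X_L$ to the block upper-triangular shape $\left[\begin{smallmatrix} I & C\\ 0 & B\end{smallmatrix}\right]$. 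This is false in general: the correct permutation depends on $\mathbf t_w$ and $\mathbf t_v$, not only on $h$ and $k$.

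A minimal counterexample: take $k=5$, $h=4$ (so we are in the $\langle\mathcal O_1^P\rangle$ case with $s=2$), $\mathbf t_v=\emptyset$, and $\mathbf t_w=(2)$, which is admissible since $(2,\mathbf w_4,\mathbf c_4,2)$ satisfies the SIP. The skeleton $S$ is the simple FPR with parameter $k-1$, and the permutation from Theorem~\ref{aux;simple} is $\mathbf c=(1,2,4,3,5)$. Here $X_L=M_2(Z)$, and a direct computation gives
\[
(\Pi^n_{\mathbf c})^{\mathcal B}M_2(Z)\,\Pi^n_{\mathbf c}=
\begin{bmatrix}
I&0&0&0&0\\ 0&I&0&0&0\\ 0&0&0&I&0\\ 0&0&I&Z&0\\ 0&0&0&0&I
\end{bmatrix},
\]
whose $(4,3)$ block is $I_n\neq 0$; this is not of the form $\left[\begin{smallmatrix} I_{3n}&C\\ 0&B\end{smallmatrix}\right]$. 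The reason is that $2$ is a \emph{Type~I} index relative to $(\mathbf w_4,\mathbf c_4)$ (since $1\in\mathrm{heads}(\mathbf w_4,\mathbf c_4)$), and Type~I indices swap the roles of a body and a wing position. This is exactly why the paper's proof does not fix $\mathbf c$ in advance: it builds $\mathbf c$ inductively on the length of $\mathbf t_w$ (Theorem~\ref{thm:aux_GFPR1}), leaving $\mathbf c$ unchanged when the new index is Type~II but composing with a transposition $\Pi^n_{\mathbf d}$ when it is Type~I, and only afterwards glues the $\mathbf w$- and $\mathbf v$-pieces together via Lemma~\ref{lem:splitting}. Your outline would go through if you replaced the fixed skeleton permutation by this inductive construction and tracked the Type~I/Type~II dichotomy; without it, step (ii) does not hold.
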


The following example illustrates the result for a particular block-symmetric GFPR associated with an odd degree matrix polynomial. 
\begin{example}\label{FPR7}
Let $P(\lambda)=\sum_{i=0}^7 A_i \lambda^i$ be an $n\times n$ matrix polynomial of degree $7$. Consider the block-symmetric GFPR 
\begin{center}
   \begin{equation*}
   L_P(\lambda):=L_P(k-1, \emptyset,\emptyset)=  \begin{bmatrix}
    \lambda A_7 + A_6 & A_5 & -I_n &&&& \\
    A_5 & -\lambda A_5 + A_4 & \lambda I_n & A_3 & -I_n && \\
    -I_n & \lambda I_n & 0 & 0 & 0 && \\
    & A_3 & 0 & -\lambda A_3 + A_2 & \lambda I_n & A_1 & -I_n \\
    & -I_n & 0 & \lambda I_n & 0 & 0 & 0 \\
    &&& A_1 & 0 & -\lambda A_1 + A_0 & \lambda I_n \\
    &&& -I_n & 0 & \lambda I_n & 0 \\
    \end{bmatrix}.
    \end{equation*}
\end{center}
Let  $\mathbf{c}=(1,2,4,6,3,5,7)$. Then,
$$(\Pi_\mathbf{c}^n)^{\mathcal{B}} L_P(\lambda)\Pi_\mathbf{c}^n =\left[ \begin{array}{cccc|ccc}
    \lambda A_7 + A_6 & A_5 & 0 &0 &-I_n &0 & 0 \\
    A_5 & -\lambda A_5 + A_4 &  A_3  & 0 & \lambda I_n & -I_n & 0 \\
    0 & A_3 & - \lambda A_3 + A_2  & A_1 & 0 &\lambda I_n & -I_n \\
     0 & 0  & A_1 & -\lambda A_1 + A_0 &  0&  0 & \lambda I_n \\
     \hline
     -I_n & \lambda I_n & 0 & 0& 0 & 0 & 0 \\
    0 &-I_n &\lambda I_n & 0 & 0 & 0 & 0 \\
    0 & 0& -I_n & \lambda I_n & 0 & 0 & 0 \\
    \end{array}\right]. $$
We note that $(\Pi_\mathbf{c}^n)^{\mathcal{B}} L_P(\lambda) \Pi_\mathbf{c}^n\in \langle \mathcal{O}_1^P \rangle$ since 
 \[
 (\Pi_\mathbf{c}^n)^{\mathcal{B}} L_P(\lambda)\Pi_\mathbf{c}^n= \left [ \begin{array}{cc} I_{4n} & C \\ 0 & I_{3n}\end{array} \right] \mathcal{O}_1^P \left [ \begin{array}{cc} I_{4n} & C \\ 0 & I_{3n}\end{array} \right]^{\mathcal{B}},
 \]
where 
$$C = \left[ \begin{array}{ccc} 0 & 0 & 0 \\ -A_5 & 0 & 0 \\ 0 & -A_3 & 0 \\ 0 & 0 & -A_1 \end{array} \right].$$ Moreover, $(\Pi_\mathbf{c}^n)^{\mathcal{B}} L_P(\lambda) \Pi_\mathbf{c}^n$ is a strong linearization of every matrix polynomial $P(\lambda)$.

\end{example}

\begin{remark}
We note that, when $k$ is odd, by Example \ref{D1DkO1} and Theorem \ref{th;TP}, the three pencils $D_1(\lambda, P)$, $D_k(\lambda, P)$ and $\mathcal{T}_P(\lambda)$ are permutationally congruent to some pencil in  $\langle \mathcal{O}_1^P \rangle$. In fact,  $\mathcal{T}_P(\lambda)$ is essentially $\mathcal{O}_1^P(\lambda)$, after permuting some block-rows and some block-columns. Thus, $\mathcal{T}_P(\lambda)$ could be seen, in layman's terms, as the ``skeleton'' of $D_1(\lambda, P)$ and $D_k(\lambda, P)$, that is, the least information that can be retained from these pencils without  stopping from being a linearization of $P(\lambda)$. Hence, $\mathcal{T}_P(\lambda)$ is an  ideal candidate to outperform numerically the combined use of $D_1(\lambda, P)$ and $D_k(\lambda, P)$ in the solution of the block-symmetric polynomial eigenvalue problem. This problem is studied in \cite{TPodd}.
\end{remark}

The following example gives the pencils in $\langle \mathcal{O}_1^P \rangle$  permutationally block congruent to $D_1(\lambda, P)$, $D_k(\lambda, P)$ and $\mathcal{T}_P(\lambda)$, when $k=3$. 

\begin{example} Let $k=3$. Then, 
$D_1(\lambda, P)$ is permutationally block congruent to the pencil 
$$\left[ \begin{array}{cc|c} \lambda A_3 +A_2 & A_1 & A_0\\ A_1 & -\lambda A_1 +A_0 & -\lambda A_0\\ \hline A_0 & - \lambda A_0 & 0 \end{array} \right ]\in \langle \mathcal{O}_1^P \rangle.$$
The pencil $D_k(\lambda, P)$ is permutationally block congruent to the pencil 
$$\left[ \begin{array}{cc|c} \lambda A_3 -A_2 &  \lambda A_2 & -A_3 \\  \lambda A_2 & \lambda A_1 - A_0 & \lambda A_3 \\ \hline -A_3  & \lambda A_3 & 0\end{array} \right ] \in \langle \mathcal{O}_1^P \rangle.$$
The pencil $\mathcal{T}_P(\lambda)$ is permutationally block congruent to the pencil 
$$\mathcal{O}_1^P(\lambda)=\left[ \begin{array}{cc|c} \lambda A_3+ A_2 & 0 & -I_n \\ 0 & \lambda A_1 +A_0 & \lambda I_n \\ \hline
-I_n & \lambda I_n & 0 \end{array} \right ] =  \mathcal{O}_1^P(\lambda).$$

\end{example}

\section{Conclusions and future work}

In this paper we have introduced four families of block-symmetric pencils that, under some generic nonsingular conditions, are block-symmetric block minimal bases pencils and strong linearizations of a matrix polynomial $P(\lambda)$. Furthermore, we have shown that every block-symmetric GFP and block-symmetric GFPR is permutationally block-congruent to a pencil in the union of these four families, which provides an alternative approach to the implicit definition of these pencils as products of elementary matrices by providing their block structure. The importance of this result resides in the expectation that the explicit block structure of the block-symmetric GFP and GFPR will provide a venue to explore their numerical properties such as conditioning of eigenvalues and backward error of approximate eigenpairs. In particular, our objective is to find linearizations of $P(\lambda)$ in these famllies with optimal condition number and backward error that can replace the combined used of $D_1(\lambda, P)$ and $D_k(\lambda, P)$ when $P(\lambda)$ is symmetric or Hermitian, as  suggested in the current literature.   

\appendix 
\section{(Proof of Theorem \ref{thm:main_GFPR})}

Here we include the proof of Theorem \ref{thm:main_GFPR}. We start with some extra concepts and results that are necessary for this proof.

\subsection{Auxiliary notation for pencils that are   block-permutationally equivalent to pencils  in $\langle \mathcal{O}_1^P\rangle \cup \langle \mathcal{E}_1^P\rangle$}

In order to prove that all block-symmetric GFPR pencils  are  permutationally block-congruent to a pencil  in $\langle \mathcal{O}_1^P\rangle \cup \langle \mathcal{O}_2^P\rangle \cup \langle \mathcal{E}_1^P\rangle \cup \langle \mathcal{E}_2^P\rangle $,  we introduce in this section some useful concepts.

\begin{definition}{\rm \cite{canonical_Fiedler}}\label{block-perm}
Let $k,n\in \mathbb{N}$. 
Let $\mathbf{c}=(c_1, c_2, \ldots, c_k)$ be a permutation of the set $\{1:k\}$. 
Then, we call the block-permutation matrix associated with $(\mathbf{c}, n)$, and denote it by $\Pi^n_{\mathbf{c}}$, the $k\times k$ block-matrix whose $(c_i,i)$th block-entry is $I_n$, for $i=1:k$, and having $0_n$ in every other block-entry. 
In particular, we denote by $\boldsymbol{\mathrm{id}}=(1:k)$ the identity permutation. 
\end{definition}

When the scalar $n$ is clear in the context, we  write $\Pi_{\mathbf{c}}$   instead of $\Pi_{\mathbf{c}}^n$ to simplify the notation.

An important block-permutation matrix for this paper is the so-called \emph{block standard involutory permutation matrix (block-sip matrix)}.
Such block-permutation matrix is
\begin{equation}\label{eq:sip matrix}
R_k := \Pi^n_{(k:1)} = \begin{bmatrix}
0 & \cdots & I_n \\ \vdots & \iddots & \vdots \\ I_n & \cdots & 0
\end{bmatrix}\in\mathbb{F}^{kn\times kn}.
\end{equation}

 In Definition \ref{def:permutationally_eq}, we introduce some useful notation for pencils that are block-permutationally equivalent to pencils in $\langle \mathcal{O}_1^P\rangle$.
\begin{definition}\label{def:permutationally_eq}
Let  $k$ be an odd positive integer, let $L(\lambda)\in\mathbb{F}[\lambda]^{kn\times kn}$ be a $k\times k$ block-pencil with block-entries of size $n\times n$,  and let $s=(k-1)/2$.  
Assume that there exist  block-permutation matrices  $\Pi_{\boldsymbol\ell}^n$ and $\Pi_{\mathbf{r}}^n$  such that $C(\lambda):=(\Pi_{\boldsymbol\ell}^n)^{\mathcal{B}} L(\lambda) \Pi_{\mathbf{r}}^n\in \langle \mathcal{O}_1^P\rangle$.
\begin{itemize}
\item[\rm (a-1)]
We call the upper-left $(s+1)\times (s+1)$ block submatrix of $C(\lambda)$ the \emph{body of $L(\lambda)$ }relative to $(\boldsymbol\ell,\mathbf{r})$.
\item[\rm (a-2)] We call the \emph{body block-rows} (resp. \emph{body block-columns}) of $L(\lambda)$ relative to $(\boldsymbol\ell,{\mathbf{r}})$ the block-rows (resp. block-columns) of $L(\lambda)$ that, after the permutations, occupy the first $s+1$ block-rows (resp. block-columns) of $C(\lambda)$. 
\item [\rm (b)] We call the \emph{wing block-rows} (resp. 
\emph{wing block-columns}) of $L(\lambda)$ relative to $(\boldsymbol\ell,\mathbf{r})$ the block-rows (resp. block-columns) of $L(\lambda)$ that are not body block-rows (resp. body block-columns) relative to $(\boldsymbol\ell,{\mathbf{r}})$. 
\end{itemize}
\end{definition}

 The following example illustrates the concepts introduced in  Definition \ref{def:permutationally_eq}.

 \begin{example}\label{ex:permutationally_eq}
Let us consider the following pencil
\[
L(\lambda)=
\begin{bmatrix}
\lambda A_5+A_4 & A_3 & -I_n & 0 & 0 \\
A_3 & A_2-\lambda A_3 & \lambda I_n & A_1 & -I_n \\
-I_n & \lambda I_n & 0 & 0 & 0 \\
0 & A_1 & 0 & A_0-\lambda A_1 & \lambda I_n \\
0 & -I_n & 0 & \lambda I_n & 0 
\end{bmatrix},
\]
which is a block-symmetric GFPR associated with the matrix polynomial $P(\lambda)=\sum_{i=0}^5 A_i\lambda^i\in\mathbb{F}[\lambda]^{n\times n}$ (this type of block-symmetric GFPR is called the  simple FPR with parameter $k-1$ in \cite{Her-GFPR}).
Consider the permutation  $\mathbf{c}=(1,2,4,3,5)$ of $\{1:5\}$.
Then, 
\[
(\Pi_{\mathbf{c}}^n)^{\mathcal{B}} L(\lambda) \Pi_{\mathbf{c}}^n =
\left[\begin{array}{ccc|cc}
\lambda A_5+A_4 & A_3 & 0 & -I_n & 0 \\
A_3 & A_2-\lambda A_3 & A_1 & \lambda I_n & -I_n \\
0 & A_1 & A_0-\lambda A_1 & 0 & \lambda I_n \\ \hline
-I_n & \lambda I_n & 0 & 0 & 0 \\
0 & -I_n & \lambda I_n & 0 & 0 
\end{array}\right]\in \langle \mathcal{O}_1^P\rangle.
\]
Hence,
\begin{itemize}
\item the first, second, and forth block-rows and block-columns of $L(\lambda)$ are, respectively, its body block-rows and body block-columns relative to $(\mathbf{c},\mathbf{c})$; and
\item the third and fifth block-rows and block-columns of $L(\lambda)$ are, respectively, its wing block-rows and wing block-columns relative to $(\mathbf{c},\mathbf{c})$.
\end{itemize}
 \end{example}

In Definition \ref{def:permutationally_eq2}, we introduce some useful notation for pencils that are block-permutationally equivalent to pencils in $\langle \mathcal{E}_1^P\rangle$.
\begin{definition}\label{def:permutationally_eq2}
Let  $k$ be an even positive integer, let $L(\lambda)\in\mathbb{F}[\lambda]^{kn\times kn}$ be a $k\times k$ block-pencil with block-entries of size $n\times n$,   and let $s=(k-2)/2$.  
Assume that there exist  block-permutation matrices  $\Pi_{\boldsymbol\ell}^n$ and $\Pi_{\mathbf{r}}^n$  such that $C(\lambda):=(\Pi_{\boldsymbol\ell}^n)^{\mathcal{B}} L(\lambda) \Pi_{\mathbf{r}}^n\in \langle \mathcal{E}_1^P\rangle$.
\begin{itemize}
\item[\rm (a-1)]
We call the upper-left $(s+1)\times (s+1)$ block submatrix of $C(\lambda)$ the \emph{body of $L(\lambda)$ }relative to $(\boldsymbol\ell,\mathbf{r})$.
\item[\rm (a-2)] We call the \emph{body block-rows} (resp. \emph{body block-columns}) of $L(\lambda)$ relative to $(\boldsymbol\ell,{\mathbf{r}})$ the block-rows (resp. block-columns) of $L(\lambda)$ that, after the permutations, occupy the first $s+1$ block-rows (resp. block-columns) of $C(\lambda)$. 
\item[\rm (b)] We call the \emph{exceptional block-row} (resp. \emph{exceptional block-column}) of $L(\lambda)$ relative to $(\boldsymbol\ell,{\mathbf{r}})$ the block-row (resp. block-column) of $L(\lambda)$ that, after the permutations, occupies the $s+2$ block-row (resp. block-column) of $C(\lambda)$.
\item [\rm (c)] We call the \emph{wing block-rows} (resp. 
\emph{wing block-columns}) of $L(\lambda)$ relative to $(\boldsymbol\ell,\mathbf{r})$ the block-rows (resp. block-columns) of $L(\lambda)$ that are not body block-rows nor exceptional block-rows (resp. body block-columns nor exceptional block-columns) relative to $(\boldsymbol\ell,{\mathbf{r}})$. 
\end{itemize}
\end{definition}

 We illustrate the concepts introduced in Definition \ref{def:permutationally_eq2} in the following example.
 \begin{example}\label{ex:permutationally_eq2}
Let us consider the following pencil
\[
L(\lambda)=
\begin{bmatrix}
\lambda A_6+A_5 & A_4 & -I_n & 0 & 0 & 0 \\
A_4 & A_3-\lambda A_4 & \lambda I_n & A_2 & -I_n & 0 \\
-I_n & \lambda I_n & 0 & 0 & 0 & 0 \\
0 & A_2 & 0 & A_1-\lambda A_2 & \lambda I_n & A_0 \\
0 & -I_n & 0 & \lambda I_n & 0 & 0 \\
0 & 0 & 0 & A_0 & 0 & -\lambda A_0
\end{bmatrix},
\]
which is a block-symmetric GFPR associated with the matrix polynomial $P(\lambda)=\sum_{i=0}^6 A_i\lambda^i\in\mathbb{F}[\lambda]^{n\times n}$ (this type of block-symmetric GFPR is called the  simple FPR with parameter $k-1$ in \cite{Her-GFPR}). 
Consider the permutation  $\mathbf{c}=(1,2,4,6,3,5)$ of $\{1:6\}$.
Then, 
\[
(\Pi_{\mathbf{c}}^n)^{\mathcal{B}} L(\lambda) \Pi_{\mathbf{c}}^n =
\left[\begin{array}{ccc|c:cc}
\lambda A_6+A_5 & A_4 & 0 & 0 & -I_n & 0 \\
A_4 & A_3-\lambda A_4 & A_2 & 0 & \lambda I_n & -I_n \\
0 & A_2 & A_1-\lambda A_2 & A_0 & 0 & \lambda I_n \\ \hdashline
0 & 0 & A_0 & -\lambda A_0 & 0 & 0 \\ \hline
-I_n & \lambda I_n & 0 & 0 & 0 & 0 \\
0 & -I_n & \lambda I_n & 0 & 0 & 0 \\
\end{array}\right]\in \langle \mathcal{E}_1^P\rangle.
\]
Hence,
\begin{itemize}
\item the first, second, and fourth block-rows and block-columns of $L(\lambda)$ are, respectively, its body block-rows and body block-columns relative to $(\mathbf{c},\mathbf{c})$; 
\item the third and fifth block-rows and block-columns of $L(\lambda)$ are, respectively, its wing block-rows and wing block-columns relative to $(\mathbf{c},\mathbf{c})$; and
\item the sixth block-row and block-column of $L(\lambda)$ are, respectively, its exceptional block-row and exceptional block-column relative to $(\mathbf{c},\mathbf{c})$.

\end{itemize}
\end{example}

\subsection{Auxiliary definitions and lemmas for index tuples}
In order to prove the auxiliary results needed in the proof of Theorem  \ref{thm:main_GFPR}, we need to introduce some definitions and results associated with the concept of  index tuple, introduced in Section \ref{sec:index-tuples}.

We start by introducing  a canonical form for index tuples satisfying the SIP (recall Definition \ref{defSIP}).
To do this, we need the following three definitions.
\begin{definition}
We say that two nonnegative indices $i$ and $j$ in an index tuple commute  if  $|i-j|\neq 1$.
\end{definition}

\begin{definition}{\rm \cite[Definition 3.4]{GFPR}}\label{def:equivalence_tuples}
Given two index tuples $\mathbf{t}$ and $\mathbf{t}'$ of nonnegative indices, we say that $\mathbf{t}$ is equivalent to $\mathbf{t}'$ (and write $\mathbf{t} \sim \mathbf{t'}$), if $\mathbf{t}= \mathbf{t'}$ or $\mathbf{t}'$ can be obtained from $\mathbf{t}$ by interchanging a finite number of times two distinct commuting indices in adjacent positions, that is, indices $t_i$ and $t_{i+1}$ such that  $| t_i - t_{i+1}| \neq 1$ and $t_i \neq t_{i+1}$.
\end{definition}

Notice that the relation $\sim$ introduced in Definition \ref{def:equivalence_tuples} is an equivalence relation.
Note, in addition, that the SIP is invariant under this relation.

\begin{remark}\label{commutativity}
It is easy to check that the commutativity relation
\begin{equation}
M_{i}(B_1)M_{j}(B_2)=M_{j}(B_2)M_{i}(B_1)\label{commut}%
\end{equation}
holds for any $n\times n$ matrices $B_1$ and $B_2$ if   $||i|-|j||\neq 1$ and  $|i|\neq |j|$. 
These commutativity relations readily imply that the product of elementary matrices is invariant under the equivalence relation introduced in Definition \ref{def:equivalence_tuples}, i.e.,  given an index tuple $\mathbf{t}$ and a matrix assignment $\mathcal{Z}$ for $\mathbf{t}$, if $\mathbf{t} \sim \mathbf{t'}$, then $M_{\mathbf{t}}(\mathcal{Z})= M_{\mathbf{t'}}(\tilde{\mathcal{Z}})$, where $\tilde{\mathcal{Z}}$ is the matrix assignment for $\mathbf{t}'$ obtained from $\mathbf{t}$ by assigning to each index in $\mathbf{t}'$ the matrix that was assigned by $\mathcal{Z}$ to the corresponding index in $\mathbf{t}$.
\end{remark}

 \begin{definition}
\label{FSC}{\rm \cite[Theorem 1]{ant-vol11}} Let ${\mathbf{t}}$ be an index tuple
with indices from $\{0:h\}$, $h \geq 0$. 
Then ${\mathbf{t}}$ is said to be in \emph{column standard form}  if
\[
{\mathbf{t}}=\left(  a_{s}:b_s,a_{s-1}:b_{s-1},\hdots,a_{2}:b_{2}%
,a_{1}:b_{1}\right),
\]
with $h\geq b_{s}>b_{s-1}>\cdots>b_{2}>b_{1}\geq0$ and $0\leq a_{j}\leq b_{j}%
$, for all $j=1:s$. We call  each subtuple of consecutive indices
$(a_{i}:b_{i})$ a \emph{string} of ${\mathbf{t}}$.
\end{definition}

The relation between the SIP, the equivalence relation of tuples, and the column standard form is stated in the following lemma.
\begin{lemma}\label{SIP-csf}{\rm \cite[Theorem 2]{ant-vol11}}
Let $\mathbf{t}$ be an index tuple. 
\begin{itemize}
\item[\rm (i)]  If the indices of $\mathbf{t}$ are all  nonnegative integers, then ${\mathbf{t}}$ satisfies the SIP if and only if $\,{\mathbf{t}}$ is equivalent to a tuple in column standard form.
\item[\rm (ii)] If the indices of $\mathbf{t}$ are all negative integers and $a$ is the minimum index in $\mathbf{t}$, then $\mathbf{t}$ satisfies the SIP if and only if $-a+\mathbf{t}$ is equivalent to a tuple in column standard form.
\end{itemize}
\end{lemma}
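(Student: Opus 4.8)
The plan is to derive (ii) from (i) by a translation, to prove the easy implication of (i) by directly inspecting the column standard form, and to prove the substantive implication of (i) by induction on $|\mathbf{t}|$. \emph{For the reduction of (ii) to (i):} if all indices of $\mathbf{t}$ are negative with minimum $a$, then $-a+\mathbf{t}$ has all nonnegative indices, and the SIP, which refers only to coincidences $i_p=i_q$ and to the presence of an index equal to $i_p+1$ strictly in between, is unaffected by adding a fixed constant to every index; likewise the equivalence $\sim$ of Definition~\ref{def:equivalence_tuples} (interchange of adjacent indices differing by at least $2$) and the notion of column standard form of Definition~\ref{FSC} are translation invariant. Hence (ii) is just (i) applied to $-a+\mathbf{t}$, and from here on all indices may be assumed nonnegative.

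\emph{Easy implication of (i).} Let $\mathbf{t}=(a_s:b_s,\dots,a_1:b_1)$ be in column standard form, so $b_s>\cdots>b_1$, and suppose a value $v$ occurs at two positions $p<q$ of $\mathbf{t}$. Within a single string the indices are pairwise distinct, so these two occurrences lie in different strings; since the strings are listed with strictly decreasing upper bounds and $p<q$, the string $r$ containing $p$ and the string $r'$ containing $q$ satisfy $b_r>b_{r'}\geq v$. Therefore $v+1\in\{a_r,\dots,b_r\}$, i.e. $v+1$ occurs in string $r$, necessarily at the position immediately after $p$; and since the occurrences lie in distinct strings, $q\neq p+1$, so $p<p+1<q$, exhibiting the required witness. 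Thus $\mathbf{t}$ satisfies the SIP, and because the SIP is invariant under $\sim$ (as noted just after Definition~\ref{def:equivalence_tuples}), so does every tuple equivalent to a column standard form.

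\emph{Substantive implication of (i).} We show by induction on $|\mathbf{t}|$ that an SIP tuple $\mathbf{t}$ with nonnegative indices is $\sim$-equivalent to a tuple in column standard form; the cases $|\mathbf{t}|\le 1$ are immediate. Set $h:=\max\mathbf{t}$. Then $h$ occurs exactly once in $\mathbf{t}$, since two occurrences of $h$ would force an index equal to $h+1$ between them; more generally, any two occurrences of a value $v$ are separated by an occurrence of $v+1$, which is exactly the SIP. Being the unique maximum, $h$ commutes with every index except occurrences of $h-1$, so I can use $\sim$ to slide $h$ leftward and, iterating, try to grow a maximal leading block $(a:h)=(a,a+1,\dots,h)$ at the front of $\mathbf{t}$. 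The key point is a downward count on the values $h,h-1,h-2,\dots$: using repeatedly that two occurrences of $v$ sandwich an occurrence of $v+1$, one shows that the values $a,\dots,h-1$ have no unabsorbed occurrence to the left of the partially built block, so the block commutes past everything to its left until it reaches the front or meets an occurrence of $a-1$, which is then absorbed into the block and the count restarts one level lower. This produces $\mathbf{t}\sim(a:h,\mathbf{r})$ with $|\mathbf{r}|<|\mathbf{t}|$; by Remark~\ref{remark:SIP}, the consecutive subtuple $\mathbf{r}$ still satisfies the SIP, and $\mathbf{r}$ contains no $h$, so by the induction hypothesis $\mathbf{r}\sim\mathbf{r}'$ with $\mathbf{r}'$ in column standard form and all of its upper bounds $\le h-1$. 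Then $(a:h,\mathbf{r}')$ is in column standard form (its leading string has upper bound $h$, strictly larger than all the others) and $\mathbf{t}\sim(a:h,\mathbf{r}')$, completing the induction.

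The step I expect to be the genuine obstacle is the assembly of the leading block $(a:h)$: one must verify that the commutations needed to pull $h$, then $h-1$, then $h-2$, and so on into consecutive leading positions are always available, and this is exactly where the SIP is used in an essential way, via the bookkeeping on how many times each value can occur to the left of the growing block. Carrying out this count cleanly — rather than through a cumbersome case analysis — is the delicate part of the argument; by contrast, the reduction of (ii) to (i), the direct verification that a column standard form satisfies the SIP, and the $\sim$-invariance of the SIP are all routine.
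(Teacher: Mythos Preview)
The paper does not prove this lemma; it is quoted verbatim from \cite[Theorem~2]{ant-vol11} and no argument is given. So there is no ``paper's own proof'' to compare against, and your proposal should be assessed on its own merits.

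Your outline is correct. The reduction of (ii) to (i) by translation is immediate, and your verification that any tuple in column standard form satisfies the SIP is clean and complete. For the substantive direction of (i), your induction on $|\mathbf{t}|$ via the ``grow a leading string $(a:h)$ and push it to the front'' strategy works; the step you flag as delicate can be made fully rigorous by the following short downward induction, which you essentially describe. Once the partial block $(a:h)$ occupies positions $p,\dots,p+h-a$ in a tuple satisfying the SIP, one shows that \emph{no} value $v\in\{a,\dots,h\}$ occurs at any position $<p$: the case $v=h$ is the uniqueness of the maximum; for $v<h$, an occurrence of $v$ at some $q<p$ together with the occurrence at $p+(v-a)$ would, by the SIP, force a $v+1$ strictly between them, but by the inductive hypothesis no $v+1$ lies to the left of $p$, and inside the block $v+1$ sits at $p+(v+1-a)>p+(v-a)$. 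Consequently the index immediately to the left of the block is either $a-1$ (which is absorbed) or differs from every block entry by at least $2$, so the block slides to the front. The remainder $\mathbf{r}$ is then a consecutive subtuple of an SIP tuple (Remark~\ref{remark:SIP}) with $\max\mathbf{r}\le h-1$, and the induction closes exactly as you say.

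In short: nothing is missing, and the ``genuine obstacle'' you anticipate dissolves into the three-line downward induction above.
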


Two tuples in column standard form are equivalent if and only if they coincide.
This motivates the following definition.
\begin{definition}{\rm \cite[Definition 3.9]{GFPR}}\label{def:unique_csf}
The unique index tuple in column standard form equivalent to an index tuple $\mathbf{t}$ of nonnegative integers satisfying the SIP is called the  \emph{column standard form of $\mathbf{t}$}  and is denoted by $\mathrm{csf}(\mathbf{t})$.
\end{definition}

In the next two definitions, we pay special attention to some indices of  tuples of nonnegative integers satisfying the SIP that will play a key role in the proofs of the main results in this paper.
\begin{definition}{\rm \cite[Definition 4.12]{canonical_Fiedler}}\label{def;heads}
For an arbitrary index tuple $\mathbf t$ satisfying the SIP with $\csf(\mathbf t) = \left(  a_{s}:b_s,\hdots
,a_{1}:b_{1}\right)$, we define $\heads(\mathbf {t}) := \{b_i ~|~ 1 \leq i \leq s\}$. 
Furthermore, we denote by  $\mathfrak{h}(\mathbf{t})$ the cardinality $s$ of $\heads(\mathbf{t})$. 
\end{definition}

Given an index tuple $\mathbf{t}$, note that $\mathfrak{h}(\mathbf{t})$ gives not only the cardinality of the set $\heads(\mathbf {t})$, but also the number of strings in $\mathrm{csf}(\mathbf{t})$.

\begin{definition}{\rm \cite[Definition 4.13]{canonical_Fiedler}}\label{tuple-type}
Given an index tuple $\mathbf t$ and an index $x$ such that $(\mathbf{t}, x)$ satisfies the SIP,  we say that $x$ is of \emph{Type~I} relative to $\mathbf t$ if   $\mathfrak{h}(\mathbf t, x)= \mathfrak{h}(\mathbf t)$, and of \emph{Type~II} otherwise. 
That is, $x$ is of Type I relative to $\mathbf{t}$ if $\csf(\mathbf{t}, x)$ has the same number of heads (and, therefore of strings) as $\csf(\mathbf{t})$, and of \emph{Type~II} relative to $\mathbf{t}$ otherwise.
\end{definition}

The next result relates the SIP property with the set $\mathrm{heads}(\mathbf{t})$.
\begin{lemma}\label{lemma:txSIP} {\rm \cite[Lemma 4.13]{canonical}}
Let $h$ be a positive integer and let $\mathbf{t}$ be an index tuple with indices from $\{0:h-1\}$.
Let $(a:b)$ be a string with indices from $\{0:h-2\}$.
Then, the tuple $(\mathbf{t},a:b)$ satisfies the SIP property if and only if $\mathbf{t}$ satisfies the SIP and $c\notin\mathrm{heads}(\mathbf{t})$, for all $c\in (a:b)$. 
\end{lemma}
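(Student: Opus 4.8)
The plan is to unravel the implicit product \eqref{symGFPR} defining the block-symmetric GFPR into an explicit block form and then exhibit the block-permutation $\mathbf{c}$ of $\{1:k\}$ that carries it into the appropriate fundamental family, following the scheme of \cite[Theorem 8.1]{canonical_Fiedler}. First I would treat the ``skeleton'' $L_P(h,\emptyset,\emptyset) = (\lambda M^P_{\mathbf{v}_h}-M^P_{\mathbf{w}_h})\,M^P_{-k+\mathbf{c}_{k-h-1},\mathbf{c}_h}$, obtained by setting $\mathbf{t}_w=\mathbf{t}_v=\emptyset$ in \eqref{symGFPR}; this is a genuine Fiedler-like pencil by Lemma \ref{wcSIP}. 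Using the explicit form of the elementary matrices $M_i(B)$ together with the description of the admissible tuples and symmetric complements appearing in \eqref{symGFPR}, one computes this product block-entry by block-entry (exactly as in Example \ref{FPR7} and in Examples \ref{ex:permutationally_eq}, \ref{ex:permutationally_eq2}) and checks that a single permutation $\mathbf{c}$ --- obtained by listing the block-indices of one parity first and those of the opposite parity second, with a small end-correction dictated by the parities of $k$ and $h$ --- sends the skeleton to the generator $\mathcal{O}_1^P(\lambda)$ when $k$ is odd and $h$ even, to $\mathcal{O}_2^P(\lambda)$ when $k,h$ are odd, to $\mathcal{E}_1^P(\lambda)$ when $k$ is even and $h$ odd, and to $\mathcal{E}_2^P(\lambda)$ when $k,h$ are even. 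In particular the body of the permuted skeleton satisfies the AS condition for $P(\lambda)$, so the $\mathbf{t}_w=\mathbf{t}_v=\emptyset$ case is already settled.

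Next I would incorporate the repetition factors $M_{\mathbf{t}_w,\mathbf{t}_v}(\mathcal{Z}_w,\mathcal{Z}_v)$ on the left and $M_{\rev(\mathbf{t}_w),\rev(\mathbf{t}_v)}(\rev(\mathcal{Z}_w),\rev(\mathcal{Z}_v))$ on the right. By Remark \ref{commutativity} and Lemma \ref{SIP-csf} I may replace $\mathbf{t}_w$ and $k+\mathbf{t}_v$ by their column standard forms without altering the pencil, reducing the analysis to strings. The key point is that, after conjugation by the \emph{same} block-permutation matrix $\Pi_\mathbf{c}^n$ found for the skeleton, every elementary factor $M_{\pm i}(Z)$ appearing in $M_{\mathbf{t}_w,\mathbf{t}_v}(\mathcal{Z}_w,\mathcal{Z}_v)$ becomes a block-triangular transformation that only adds body block-rows/columns into wing block-rows/columns, so the whole left factor turns into a transformation of the form $\left[\begin{smallmatrix} I & C \\ 0 & B\end{smallmatrix}\right]$ (respectively with the $B,C,D,E$ pattern of \eqref{eq:second_family}, \eqref{eq:third_family} or \eqref{eq:fourth_family} in the other three cases); because $M_{\rev(\mathbf{t}_w),\rev(\mathbf{t}_v)}(\rev(\mathcal{Z}_w),\rev(\mathcal{Z}_v))$ is precisely the ``mirror'' of the left factor, the right factor becomes its block transpose. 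Composing with the inner block-congruence of Remark \ref{O1-struct} (and its analogues \eqref{gen-2}, \eqref{gen-3}, \eqref{eq:equivalence for E2}), this places $(\Pi_\mathbf{c}^n)^\mathcal{B} L_P(h,\mathbf{t}_w,\mathbf{t}_v,\mathcal{Z}_w,\mathcal{Z}_v)\Pi_\mathbf{c}^n$ in the claimed fundamental family; block-symmetry is preserved automatically because $\Pi_\mathbf{c}^n$ is a block-permutation matrix, as recalled in Section \ref{sec:notation}. Deciding which indices of $\mathbf{t}_w$ and $\mathbf{t}_v$ produce body entries and which produce wing (or exceptional) entries is governed by $\heads(\mathbf{t}_w)$, $\heads(\mathbf{t}_v)$ and the Type~I / Type~II distinction of Definition \ref{tuple-type}, together with Lemma \ref{lemma:txSIP}.

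The main obstacle is carrying out this identification \emph{uniformly} over the whole infinite family of admissible pairs $(\mathbf{t}_w,\mathbf{t}_v)$: one must show the permutation $\mathbf{c}$, which depends only on $k$ and $h$, works for every choice of repetition tuples and that the extra blocks created never escape the wing (or exceptional) region. I would handle this by induction on the number of strings of $\csf(\mathbf{t}_w)$ and of $\csf(\mathbf{t}_v)$ --- appending one string at a time and using Lemma \ref{lemma:txSIP} and Definition \ref{tuple-type} to see that a Type~I string merely enlarges an existing wing block while a Type~II string creates a new wing block in a controlled position --- which is the lengthy bookkeeping that parallels \cite[Theorem 8.1]{canonical_Fiedler} and is relegated to the Appendix.

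Finally, for the ``furthermore'' part I would argue that under (i) the matrix assignments $\mathcal{Z}_w,\mathcal{Z}_v$ are nonsingular, and under (ii)--(iii) the matrix $A_0$ is nonsingular exactly when the index $0$ occurs in the tuples of \eqref{symGFPR} (i.e.\ when $h$ is odd) and $A_k$ is nonsingular exactly when the index $-k$ occurs (i.e.\ when $k-h$ is even). Since $M_i(B),M_{-i}(B)$ are nonsingular for $i=1:k-1$ for every $B$, while $M_0(B)$ and $M_{-k}(B)$ are nonsingular precisely when $B$ is, all factors of the left repetition transformation (and of the skeleton) are then nonsingular. Hence the $(2,2)$ block of the triangular transformation produced above --- which is $B$ in \eqref{eq:first_family}, $E$ in \eqref{eq:second_family}, and $D$ in \eqref{eq:third_family} and \eqref{eq:fourth_family} --- is nonsingular, proving (a)--(d), and Theorems \ref{thm:first family}, \ref{thm:second family}, \ref{thm:third family} together with the corresponding theorem for $\langle \mathcal{E}_2^P\rangle$ then give that $(\Pi_\mathbf{c}^n)^\mathcal{B} L_P(h,\mathbf{t}_w,\mathbf{t}_v,\mathcal{Z}_w,\mathcal{Z}_v)\Pi_\mathbf{c}^n$ is a strong linearization of $P(\lambda)$.
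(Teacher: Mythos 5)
Your proposal does not prove the statement in question. The statement is Lemma \ref{lemma:txSIP}, a purely combinatorial fact about index tuples: $(\mathbf{t},a:b)$ satisfies the SIP if and only if $\mathbf{t}$ satisfies the SIP and no index of the string $(a:b)$ lies in $\heads(\mathbf{t})$. What you have written is instead a proof sketch of Theorem \ref{thm:main_GFPR} (the permutational block-congruence of block-symmetric GFPR to the four fundamental families). Worse, your argument explicitly invokes ``Lemma \ref{lemma:txSIP}'' as one of its tools, so even if one tried to read it as a proof of that lemma it would be circular. Nothing in the proposal engages with the objects the lemma is actually about --- the column standard form $\csf(\mathbf{t})=(a_s:b_s,\ldots,a_1:b_1)$, the set $\heads(\mathbf{t})$ of string heads, or the Successor Infix Property itself.

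A correct argument would run along the following lines, entirely at the level of tuples. For the ``only if'' direction: any subtuple of consecutive indices of a SIP tuple satisfies the SIP (Remark \ref{remark:SIP}), so $\mathbf{t}$ does; and if some $c\in(a:b)$ were a head $b_j$ of $\csf(\mathbf{t})$, then between that occurrence of $c$ in $\mathbf{t}$ and the occurrence of $c$ in the appended string there is no occurrence of $c+1$ --- the later strings of $\csf(\mathbf{t})$ contain only indices $\le b_{j-1}<c$, and the portion $a,\ldots,c-1$ of $(a:b)$ preceding $c$ contains no $c+1$ either --- violating the SIP. For the ``if'' direction: a repeated value in $(\mathbf{t},a:b)$ either has both occurrences in $\mathbf{t}$ (handled by the SIP of $\mathbf{t}$), or is some $c\in(a:b)$ also occurring in $\mathbf{t}$; since $c\notin\heads(\mathbf{t})$, every occurrence of $c$ in $\csf(\mathbf{t})$ is interior to a string and hence immediately followed by $c+1$, which supplies the required intermediate index (SIP being invariant under the equivalence of Definition \ref{def:equivalence_tuples}). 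Note also that in the paper this lemma is quoted from \cite[Lemma 4.13]{canonical} and not reproved, but in any case your text cannot stand in for its proof.
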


\begin{proposition}\label{prop:SIPheads}{\rm \cite[Lemma 4.14]{canonical}}
Let $\mathbf{t}=(a_s:b_s, \ldots, a_2:b_2, a_1:b_1)$ be a nonempty index tuple in column standard form with indices from $\{0:k-1\}$, for $k\geq 1$. Let $x$ be an index in $\{0:b_s-1\}$ such that $(\mathbf{t},x)$ satisfies the SIP. Then, $x$ is of Type I relative to $\mathbf{t}$ if and only if $x-1\in \heads(\mathbf{t})$. In particular, $x=0$ is always an index of Type II relative to $\mathbf{t}$, for every nonempty index tuple $\mathbf{t}$ in column standard form with nonnegative indices. 
\end{proposition}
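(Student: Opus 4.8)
The plan is to determine the column standard form $\csf(\mathbf t,x)$ explicitly and then just count its strings, since $\mathfrak h(\mathbf t,x)$ is by definition that number and $\mathfrak h(\mathbf t)=s$. First I would extract from the hypothesis that $(\mathbf t,x)$ satisfies the SIP the extra information $x\notin\heads(\mathbf t)=\{b_1,\ldots,b_s\}$: this is exactly what Lemma~\ref{lemma:txSIP} gives when applied with the length-one string $(x:x)$, which is legitimate because the indices of $\mathbf t$ lie in $\{0:k-1\}$ and $x\le b_s-1\le k-2$. Since also $x<b_s$, there is a well-defined smallest index $r\in\{1,\ldots,s\}$ with $b_r>x$; by minimality of $r$ and by $x\notin\heads(\mathbf t)$ one has $b_r>x$ and (setting $b_0:=-1$) $b_{r-1}<x$. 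The core of the argument is then to slide the trailing index $x$ of $(\mathbf t,x)$ leftwards, using the commuting moves of Remark~\ref{commutativity} (two nonnegative indices commute as soon as they differ by at least $2$), until it reaches the slot it occupies in the column standard form, and to observe that this slot is governed precisely by whether $x-1$ is a head of $\mathbf t$.

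Suppose first $x-1\in\heads(\mathbf t)$. Then necessarily $r\ge 2$ and $b_{r-1}=x-1$, because $x-1$ is a head with $x-1<b_r$ and $b_{r-1}<x$. The strings $(a_{r-2}:b_{r-2}),\ldots,(a_1:b_1)$ then consist only of indices $\le b_{r-2}\le x-2$, so $x$ slides past all of them and ends up immediately after $(a_{r-1}:b_{r-1})$, giving $(\mathbf t,x)\sim(a_s:b_s,\ldots,a_r:b_r,\,a_{r-1}:x,\,a_{r-2}:b_{r-2},\ldots,a_1:b_1)$ since $(a_{r-1}:b_{r-1},x)=(a_{r-1}:x)$. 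This tuple is in column standard form — its tops are $b_s>\cdots>b_r>x>b_{r-2}>\cdots>b_1$ — and it has $s$ strings, so by uniqueness of the column standard form (Definition~\ref{def:unique_csf}) we get $\mathfrak h(\mathbf t,x)=s=\mathfrak h(\mathbf t)$, i.e.\ $x$ is of Type~I. Suppose instead $x-1\notin\heads(\mathbf t)$. Then $b_{r-1}<x$ with $b_{r-1}\ne x-1$, so every index in the strings $(a_{r-1}:b_{r-1}),\ldots,(a_1:b_1)$ is $\le x-2$; sliding $x$ past exactly these strings and stopping it immediately after $(a_r:b_r)$ — without ever pushing it into that string — yields $(\mathbf t,x)\sim(a_s:b_s,\ldots,a_r:b_r,\,x:x,\,a_{r-1}:b_{r-1},\ldots,a_1:b_1)$, which is already in column standard form (tops $b_s>\cdots>b_r>x>b_{r-1}>\cdots>b_1$) and has $s+1$ strings. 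Hence $\mathfrak h(\mathbf t,x)=s+1\ne s$ and $x$ is of Type~II. The two cases together give the claimed equivalence, and the ``in particular'' statement follows at once: if $x=0$ then $x-1=-1\notin\heads(\mathbf t)$, so $x=0$ is always of Type~II.

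I do not expect a genuine obstruction here; the whole proof is bookkeeping with commuting moves. The two points that require real care are: (i) checking that the trailing $x$ truly commutes all the way to the indicated seam before being blocked — this is precisely where the hypotheses $x\notin\heads(\mathbf t)$ (both cases) and the additional $x-1\notin\heads(\mathbf t)$ (second case) are consumed, since they forbid an index equal to $x-1$, $x$ or $x+1$ from lying between $(a_r:b_r)$ and the right end of $\mathbf t$; and (ii) verifying the strict inequalities among the tops around the inserted $x$, so that the exhibited equivalent tuple really is in column standard form. The degenerate case $r=1$ (no strings to the left of $(a_r:b_r)$, so $x$ simply remains at the right end of the tuple) has to be handled separately, but it is immediate and in fact already contains the ``in particular'' assertion.
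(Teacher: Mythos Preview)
Your argument is correct. The paper does not actually supply its own proof of this proposition; it is quoted verbatim from \cite[Lemma 4.14]{canonical} and stated without proof, so there is nothing in the paper to compare your approach against.

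As a self-contained proof your reasoning is sound. The key steps --- extracting $x\notin\heads(\mathbf t)$ from Lemma~\ref{lemma:txSIP}, locating the seam index $r$, sliding $x$ leftward using the commuting relation of Definition~\ref{def:equivalence_tuples} until it either extends the string $(a_{r-1}:b_{r-1})$ (when $b_{r-1}=x-1$) or sits as a new singleton string $(x:x)$ (when $b_{r-1}\le x-2$), and then invoking uniqueness of the column standard form --- are all valid and cover every case, including the boundary case $r=1$ and the special value $x=0$. One minor cosmetic point: the commuting of indices you use is the content of the unnamed definition preceding Definition~\ref{def:equivalence_tuples} (``two nonnegative indices $i$ and $j$ commute if $|i-j|\neq 1$''), not Remark~\ref{commutativity}, which concerns products of elementary matrices; the conditions coincide, so this does not affect correctness.
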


\begin{remark}\label{rem: Type I and II heads}
Using the notation of Proposition \ref{prop:SIPheads}, we note that this proposition implies that $\heads(\mathbf t, 
\allowbreak x) = (\heads(\mathbf t) \setminus \{b_j\}) \cup \{x\}$ when $x$ is an index of Type I relative to $\mathbf t$ and $x=b_j+1$.  Furthermore,  if $x$ is of Type II relative to $\mathbf t$, then $\heads(\mathbf t,x) = \heads(\mathbf t) \cup \{x\}$. 
\end{remark}

\subsection{Proof of Theorem \ref{thm:main_GFPR}} 

Before we give  the proof of Theorem \ref{thm:main_GFPR}, 
 we need some auxiliary results for block-symmetric GFPR associated with even and odd matrix polynomials.

\subsubsection{Auxiliary results for the odd degree case}

We first prove that  Theorem \ref{thm:main_GFPR} holds for the block-symmetric GFPR called the simple FPR with parameter $k-1$ in \cite{Her-GFPR}.
\begin{theorem}\label{aux;simple}
Let $P(\lambda)=\sum_{i=0}^k A_i\lambda^i\in\mathbb{F}[\lambda]^{n\times n}$ with odd degree $k$, let $s=(k-1)/2$, and let $F_k(\lambda)=(\lambda M^P_{-k}-M^P_{\mathbf{w}_{k-1}})M^P_{\mathbf{c}_{k-1}}$.
Let $\mathbf{c}$ be the permutation of $\{1:k\}$ given by $(1,2,4,6,\ldots, k-1, 3, 5, \ldots, k)$. Then,
\begin{equation}\label{eq:main_FPR}
( \Pi_\mathbf{c}^n)^\mathcal{B} F_k(\lambda) \Pi_\mathbf{c}^n = 
\left[\begin{array}{c|c}
M(\lambda;P) + CK_s(\lambda)+K_s(\lambda)^T C^\mathcal{B} & K_s(\lambda)^T \\ \hline
K_s(\lambda) & 0
\end{array}\right] \in\langle \mathcal{O}_1^P \rangle,
\end{equation}
for some matrix $C$.
Moreover, the following statements hold.
\begin{itemize}
\item[\rm (a)] The wing block-columns of  $(\Pi_\mathbf{c}^n)^{\mathcal{B}} F_{k}(\lambda)$ relative to $(\mathbf{id}, \mathbf{c})$  are  of the form $-e_i \otimes I_n + \lambda e_{i+1} \otimes I_n$, for $1\leq i \leq s$,  and are  located in  positions $k-j$, where $j\in \{0:k-2\}$ and $(\mathbf{w}_{k-1},\mathbf{c}_{k-1}, j)$ satisfies the SIP.

\item[\rm (b)] The wing block-rows of $F_{k}(\lambda)\Pi_\mathbf{c}^n$ relative to $(\mathbf{c},\mathbf{id})$ are of the form $-e_i^T \otimes I_n + \lambda e_{i+1}^T \otimes I_n$, for $1\leq i \leq s$,  and are  located in  positions $k-j$, where $j\in \{0:k-2\}$ and $(\mathbf{w}_{k-1},\mathbf{c}_{k-1}, j)$ satisfies the SIP.

\item[\rm (c)]  The first block-row and the first block-column of $F_{k}(\lambda)$ are, respectively, the first body block-row and the first body block-column of $F_{k}(\lambda)$ relative to $(\mathbf{c}, \mathbf{c} )$.
 Moreover, the block-entry of  $( \Pi_\mathbf{c}^n)^\mathcal{B} F_k(\lambda) \Pi_\mathbf{c}^n$ in position $(1,1)$ equals $\lambda A_k + A_{k-1}$.
\end{itemize}
Furthermore, the pencil $( \Pi_\mathbf{c}^n)^\mathcal{B} F_k(\lambda) \Pi_\mathbf{c}^n$ is a strong linearization of $P(\lambda)$. 
\end{theorem}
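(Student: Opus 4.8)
The statement to prove is Theorem~\ref{aux;simple}, which describes the explicit block-structure of the ``simple FPR with parameter $k-1$'' after a specific symmetric block-permutation, and asserts that the resulting pencil lies in $\langle \mathcal{O}_1^P\rangle$ and is a strong linearization of $P(\lambda)$. The plan is to proceed in three stages: first, unwind the implicit definition of $F_k(\lambda)=(\lambda M^P_{-k}-M^P_{\mathbf{w}_{k-1}})M^P_{\mathbf{c}_{k-1}}$ into an explicit $k\times k$ block pencil; second, verify by direct computation that conjugating by $\Pi^n_{\mathbf c}$ with $\mathbf c=(1,2,4,6,\ldots,k-1,3,5,\ldots,k)$ produces the claimed form and check parts (a), (b), (c); third, invoke the machinery of Section~\ref{sec:four-families} to conclude the strong linearization property.

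For the first stage I would compute $M^P_{\mathbf{w}_{k-1}}$ and $M^P_{\mathbf{c}_{k-1}}$ from the factorizations of the admissible tuple $\mathbf{w}_{k-1}=(k-2:k-1,k-4:k-3,\ldots,1:2,0)$ and its symmetric complement $\mathbf{c}_{k-1}=(k-2,k-4,\ldots,2,0)$ (here $k-1$ is even, so $p=0$), using the explicit forms of the elementary matrices $M_i(B)$, $M_{-i}(B)$, $M_0(B)$, $M_{-k}(B)$ from Section~\ref{sec:GFPR-def} and the commutativity relations in Remark~\ref{commutativity}. Since these products telescope in a predictable ``staircase'' pattern — this is exactly the kind of computation carried out in the GFPR literature \cite{FPR1,GFPR,Her-GFPR} — the result is a block pencil whose nonzero block-entries are $\lambda A_k+A_{k-1}$, the alternating blocks $A_j-\lambda A_{j+1}$ and $A_j$ on the ``body'' part, and $\pm I_n$, $\lambda I_n$ entries forming the Fiedler coupling relations. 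The explicit $k=7$ pencil already displayed in Example~\ref{FPR7} (the pencil $L_P(k-1,\emptyset,\emptyset)$) serves as the template: I would present the general pattern and note that $F_k(\lambda)$ coincides with it since $\mathbf{t}_w=\mathbf{t}_v=\emptyset$.

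For the second stage, applying $(\Pi^n_{\mathbf c})^{\mathcal B}(\cdot)\Pi^n_{\mathbf c}$ simply relabels block-rows and block-columns according to $\mathbf c$, collecting the ``body'' indices $1,2,4,6,\ldots,k-1$ into the first $s+1$ positions and the ``wing'' indices $3,5,\ldots,k$ into the last $s$ positions. A direct inspection then shows the $(s+1)\times(s+1)$ upper-left block equals $M(\lambda;P)+CK_s(\lambda)+K_s(\lambda)^TC^{\mathcal B}$ for the matrix $C$ with block-entries $-A_{k-2},-A_{k-4},\ldots,-A_1$ in the appropriate subdiagonal positions (exactly as in the $C$ of Example~\ref{FPR7}), the upper-right block equals $K_s(\lambda)^T$, the lower-left equals $K_s(\lambda)$, and the lower-right is $0$; this is membership in $\langle\mathcal O_1^P\rangle$ by Definition~\ref{def:first_family} with $B=I_{sn}$. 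Parts (a) and (b) are read off from the Fiedler coupling rows/columns: each wing block-row (column) carries a single $-I_n$ and a single $\lambda I_n$ in adjacent body positions, i.e.\ has the form $-e_i^T\otimes I_n+\lambda e_{i+1}^T\otimes I_n$, and their original positions in $F_k(\lambda)$ are the odd indices $3,5,\ldots,k$, which one checks are exactly the positions $k-j$ with $(\mathbf{w}_{k-1},\mathbf{c}_{k-1},j)$ satisfying the SIP (using Lemma~\ref{lemma:txSIP} and the fact that $\heads(\mathbf{w}_{k-1},\mathbf{c}_{k-1})$ can be computed from $\csf(\mathbf{w}_{k-1},\mathbf{c}_{k-1})$); part (c) is immediate since index $1$ is the first body index and the $(1,1)$ block of $F_k(\lambda)$ is visibly $\lambda A_k+A_{k-1}$. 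For the final stage, since $M(\lambda;P)$ satisfies the AS condition for $P(\lambda)$ (noted just after \eqref{eq:M(lambda,P)}) and $B=I_{sn}$ is nonsingular, Theorem~\ref{thm:first family} (equivalently Theorem~\ref{thm:linearizationBKP_mod} via Remark~\ref{rem:thm3.9}) gives that $(\Pi^n_{\mathbf c})^{\mathcal B}F_k(\lambda)\Pi^n_{\mathbf c}$ is a strong linearization of $P(\lambda)$; and since permutational block-congruence is a strict equivalence, $F_k(\lambda)$ itself is too.

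\textbf{Main obstacle.} The genuinely laborious step is the first one: turning the implicit product of elementary matrices into the explicit staircase pencil and, in particular, tracking precisely which $\pm A_i$ blocks appear where (and the signs, given the $M_i^P=M_i(-A_i)$ versus $M_{-i}^P=M_{-i}(A_i)$ convention). The bookkeeping of the SIP-based positions in parts (a)--(b) — identifying $\{k-j : (\mathbf{w}_{k-1},\mathbf{c}_{k-1},j)\text{ satisfies the SIP}\}$ with the set of wing positions — is the other delicate point; it hinges on computing $\heads(\mathbf{w}_{k-1},\mathbf{c}_{k-1})$ and applying Lemma~\ref{lemma:txSIP}, and is the part most naturally deferred to or paralleled with the corresponding argument in \cite{canonical_Fiedler}. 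Everything after the explicit form is obtained is routine verification plus citation of the already-established linearization theorems.
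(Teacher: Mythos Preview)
Your proposal is correct and follows essentially the same approach as the paper's own proof: compute the explicit staircase block-structure of $F_k(\lambda)$ (the paper cites it from \cite{Her-GFPR} rather than rederiving it), read off the permuted form with the explicit subdiagonal $C$ and $B=I_{sn}$, verify parts (a)--(c) by direct inspection together with the computation of $\csf(\mathbf{w}_{k-1},\mathbf{c}_{k-1})$ and Lemma~\ref{lemma:txSIP}, and conclude the strong linearization property from Theorem~\ref{thm:first family}. The paper's proof is slightly terser in that it simply quotes the explicit form and the column standard form $\csf(\mathbf{w}_{k-1},\mathbf{c}_{k-1})=(k-2:k-1,k-4:k-2,\ldots,1:3,0:1)$, from which $\heads(\mathbf{w}_{k-1},\mathbf{c}_{k-1})=\{k-1,k-2,\ldots,3,1\}$ and hence the admissible $j$'s are exactly $\{0,2,\ldots,k-3\}$, matching the wing positions $\{3,5,\ldots,k\}$.
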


\begin{proof}
We begin by recalling the block-structure of $F_k(\lambda)$ when $k$ is odd \cite[Section 8]{Her-GFPR}.
Notice that $F_3(\lambda)$ is partitioned to show that it is an extended block Kronecker pencil.
\[
F_3(\lambda) = \left[ \begin{array}{cc|c} \lambda A_3+A_2 & A_1 & -I_n \\ A_1 & -\lambda A_1 +A_0 & \lambda I_n \\ \hline -I_n & \lambda I_n & 0 \end{array} \right]
\]
For $k\geq 5$ odd, we have that $F_k(\lambda)$ is of the form
\begin{equation}\label{Fodd}
\begin{bmatrix}
		\lambda A_k + A_{k-1} & A_{k-2} & -I_n &&&&&&\\
		A_{k-2} & A_{k-3} - \lambda A_{k-2} & \lambda I_n & A_{k-4} & -I_n &&&&\\
		-I_n & \lambda I_n & 0 & 0 &&&&&\\
		& A_{k-4} & 0 & \ddots & \ddots &&&&&\\
		& -I_n && \ddots &&&&&&\\
		&&&&& A_2 - \lambda A_3 & \lambda I_n & A_1 & -I_n \\
		&&&&& \lambda I_n & 0 & 0 & 0\\
		&&&&& A_1 & 0 & A_0 -\lambda A_1 & \lambda I_n \\
		&&&&& -I_n & 0 & \lambda I_n & 0\\
	\end{bmatrix},
\end{equation}
where the empty spaces denote zero blocks.

For $k=3$ the theorem can be easily checked. So let us assume $k\geq 3$. 
From the explicit block-structure of the pencil $F_k(\lambda)$ above, it is easy to see that  part (c) holds and $( \Pi_\mathbf{c}^n)^\mathcal{B} F_k(\lambda) \Pi_\mathbf{c}^n$ is of the form \eqref{eq:main_FPR}, with
%
\[
C=\left[ \begin{array}{cccc} 0 & 0 & \cdots & 0 \\ -A_{k-2} & 0 & \cdots & 0 \\ 0 & -A_{k-4} & \cdots & 0\\ \vdots & \vdots & \ddots & \vdots\\ 0 & 0 & \cdots & -A_1 \end{array} \right].
\]

Parts (a) and (b) follow from checking directly that the wing block-columns and the wing block-rows of $F_k(\lambda)$ relative to $(\mathbf{c}, \mathbf{c})$ are in positions $k-j\in\{3,5,\hdots,k\}$, which, in turn, by Lemmas \ref{wcSIP} and \ref{lemma:txSIP}, correspond to those values of $j\in \{0:k-2\}$ such that $(\mathbf{w}_{k-1},\mathbf{c}_{k-1}, j)$ satisfies the SIP.  
Note that 
$\mathrm{csf}(\mathbf{w}_{k-1}, \mathbf{c}_{k-1})= (k-2:k-1, k-4:k-2, k-6:k-4, \ldots,  1:3, 0:1).$
\end{proof}

The second step towards proving Theorem \ref{thm:main_GFPR} consists in showing that this theorem holds  for general block-symmetric GFPR with parameter $k-1$. 
We also prove  some structural information concerning the block-rows and block-columns of these particular GFPR.
This structural information will allow us to prove this result using an induction argument. 
\begin{theorem}\label{thm:aux_GFPR1}
Let $P(\lambda)=\sum_{i=0}^k A_i\lambda^i\in\mathbb{F}[\lambda]^{n\times n}$ with odd degree $k$, let $s=(k-1)/2$, and let $L_P(k-1,\mathbf{t}_w,\emptyset,\mathcal{Z}_w,\emptyset)$ be a block-symmetric GFPR associated with $P(\lambda)$, that is, a pencil of the form
\[
M_{\mathbf{t}_w}(\mathcal{Z}_w)(\lambda M^P_{-k}-M^P_{\mathbf{w}_{k-1}})M^P_{\mathbf{c}_{k-1}}M_{\rev(\mathbf{t}_w)}(\rev(\mathcal{Z}_w)).
\]
Then, there exists a permutation $\mathbf{c}$ of $\{1:k\}$ such that
\begin{equation}\label{eq:aux_GFPR1}
 (\Pi_\mathbf{c}^n)^\mathcal{B} L_P(k-1,\mathbf{t}_w,\emptyset, \mathcal{Z}_w,\emptyset) \Pi_\mathbf{c}^n =  \left[\begin{array}{c|c}
            M(\lambda;P) + C K_{s}(\lambda)+ K_{s}^T(\lambda)C^\mathcal{B} 
            & K_{s}(\lambda)^T  B^\mathcal{B} \\
            \hline
            BK_{s}(\lambda) & 0
        \end{array}\right]\in \langle \mathcal{O}_1^P \rangle.
 \end{equation}
Moreover, the following statements hold:
\begin{itemize}
\item[\rm (a)] The wing block-columns of  $(\Pi_\mathbf{c}^n)^{\mathcal{B}} L_P(k-1,\mathbf{t}_w,\emptyset, \mathcal{Z}_w,\emptyset)$ relative to $(\mathbf{id}, \mathbf{c})$ that are  of the form $-e_i \otimes I_n + \lambda e_{i+1} \otimes I_n$, for $1\leq i \leq s$,  are  located in  positions $k-j$, where $j\in \{0:k-2\}$ and $(\mathbf{t}_w,\mathbf{w}_{k-1},\mathbf{c}_{k-1},\rev(\mathbf{t}_w), j)$ satisfies the SIP.

\item[\rm (b)] The wing block-rows of $L_P(k-1,\mathbf{t}_w,\emptyset,\mathcal{Z}_w,\emptyset)\Pi_\mathbf{c}^n$ relative to $(\mathbf{c}, \mathbf{id})$ that are of the form $-e_i^T \otimes I_n + \lambda e_{i+1}^T \otimes I_n$, for $1\leq i \leq s$,   are  located in  positions $k-j$, where $j\in \{0:k-2\}$ and $(\mathbf{t}_w,\mathbf{w}_{k-1},\mathbf{c}_{k-1},\rev(\mathbf{t}_w), j)$ satisfies the SIP.

\item[\rm (c)]  The first block-row and the first block-column of $L_P(k-1,\mathbf{t}_w,\emptyset,\mathcal{Z}_w,\emptyset)$ are, respectively, the first body block-row and the first body block-column  of $L_P(k-1,\mathbf{t}_w,\emptyset,\mathcal{Z}_w,\emptyset)$ relative to $(\mathbf{c},\mathbf{c})$.
 Moreover, the block-entry of $ (\Pi_\mathbf{c}^n)^\mathcal{B} L_P(k-1,\mathbf{t}_w,\emptyset,\allowbreak \mathcal{Z}_w,\emptyset) \Pi_\mathbf{c}^n$ in position $(1,1)$ equals $\lambda A_k + A_{k-1}$.
\end{itemize}
Furthermore, if $\mathcal{Z}_w$ is a nonsingular matrix assignment for $\mathbf{t}_w$, then $B$ and $B^\mathcal{B}$ are nonsingular, and $(\Pi_\mathbf{c}^n)^\mathcal{B} L_P(k-1,\mathbf{t}_w,\emptyset, \mathcal{Z}_w,\emptyset) \Pi_\mathbf{c}^n$ is a strong linearization of $P(\lambda)$.
\end{theorem}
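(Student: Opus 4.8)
Here is my plan for proving Theorem~\ref{thm:aux_GFPR1}.

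I would argue by induction on the length $|\mathbf{t}_w|$. The base case $\mathbf{t}_w=\emptyset$ is precisely Theorem~\ref{aux;simple}: then $L_P(k-1,\emptyset,\emptyset,\emptyset,\emptyset)=F_k(\lambda)$, the permutation $\mathbf{c}=(1,2,4,\ldots,k-1,3,5,\ldots,k)$ works, $B=I_{sn}$, $C$ is the explicit matrix displayed in that proof, and (a)--(c) hold. For the inductive step I would write $\mathbf{t}_w=(j,\widehat{\mathbf{t}}_w)$, where $j\in\{0:k-2\}$ is the first index of $\mathbf{t}_w$ and $Z_1$ is the matrix it is assigned, so $\mathcal{Z}_w=(Z_1,\widehat{\mathcal{Z}}_w)$. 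Since $F_k(\lambda)=(\lambda M^P_{-k}-M^P_{\mathbf{w}_{k-1}})M^P_{\mathbf{c}_{k-1}}$ and by the definition of $M_{\mathbf{t}_w}$ and $M_{\rev(\mathbf{t}_w)}$, the pencil factors as
\[
L_P(k-1,\mathbf{t}_w,\emptyset,\mathcal{Z}_w,\emptyset)=M_j(Z_1)\,L_P(k-1,\widehat{\mathbf{t}}_w,\emptyset,\widehat{\mathcal{Z}}_w,\emptyset)\,M_j(Z_1).
\]
Deleting the first and last indices of $(\mathbf{t}_w,\mathbf{w}_{k-1},\mathbf{c}_{k-1},\rev(\mathbf{t}_w))$ leaves $(\widehat{\mathbf{t}}_w,\mathbf{w}_{k-1},\mathbf{c}_{k-1},\rev(\widehat{\mathbf{t}}_w))$, which still satisfies the SIP by Remark~\ref{remark:SIP}; hence $L_P(k-1,\widehat{\mathbf{t}}_w,\emptyset,\widehat{\mathcal{Z}}_w,\emptyset)$ is again a block-symmetric GFPR with parameter $k-1$ and the induction hypothesis supplies a permutation $\widehat{\mathbf{c}}$ for which $(\Pi^n_{\widehat{\mathbf{c}}})^{\mathcal{B}}L_P(k-1,\widehat{\mathbf{t}}_w,\emptyset,\widehat{\mathcal{Z}}_w,\emptyset)\Pi^n_{\widehat{\mathbf{c}}}$ has the form \eqref{eq:aux_GFPR1} and satisfies (a)--(c).

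The heart of the proof is to analyze the congruence $X\mapsto M_j(Z_1)XM_j(Z_1)$. Since $M_j(Z_1)$ agrees with the identity outside a small block window around position $k-j$, left (resp.\ right) multiplication by it recombines only a couple of block-rows (resp.\ block-columns) of $L_P(k-1,\widehat{\mathbf{t}}_w,\emptyset,\widehat{\mathcal{Z}}_w,\emptyset)$. Using parts (a)--(c) of the induction hypothesis together with Lemma~\ref{lemma:txSIP}, Proposition~\ref{prop:SIPheads} and Remark~\ref{rem: Type I and II heads}, I would determine which of the affected block-rows and block-columns are body ones and which are wing ones relative to $(\widehat{\mathbf{c}},\widehat{\mathbf{c}})$ --- this being governed by whether the tuples obtained by appending $j$ or $j-1$ to $(\widehat{\mathbf{t}}_w,\mathbf{w}_{k-1},\mathbf{c}_{k-1},\rev(\widehat{\mathbf{t}}_w))$ satisfy the SIP, equivalently by whether $j$ is of Type~I or Type~II relative to $\csf(\widehat{\mathbf{t}}_w,\mathbf{w}_{k-1},\mathbf{c}_{k-1},\rev(\widehat{\mathbf{t}}_w))$ --- and split into the corresponding cases. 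In each case one checks that the congruence by $M_j(Z_1)$ preserves the shape \eqref{eq:aux_GFPR1}: the first block-row and block-column are never touched, so the $(1,1)$ entry stays $\lambda A_k+A_{k-1}$, giving (c); every wing block-row/column either stays in the Kronecker form $-e_i\otimes I_n+\lambda e_{i+1}\otimes I_n$ (or its transpose) or moves into the body while a new such block-row/column appears, so that after an appropriate update of $\widehat{\mathbf{c}}$ to a new permutation $\mathbf{c}$ the $s$ wing block-columns (and block-rows) occupy exactly the positions $k-j'$ for which $(\mathbf{t}_w,\mathbf{w}_{k-1},\mathbf{c}_{k-1},\rev(\mathbf{t}_w),j')$ satisfies the SIP, giving (a)--(b); and the only effect on the body block is the addition of terms $C'K_s(\lambda)+K_s(\lambda)^TC'^{\mathcal{B}}$, while the wing block $B$ gets left-multiplied and $B^{\mathcal{B}}$ right-multiplied by new (invertible, when $Z_1$ is) block factors. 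This closes the induction.

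It remains to treat the nonsingularity and linearization claims. If $\mathcal{Z}_w$ is a nonsingular matrix assignment, every factor $M_i(Z)$ with $i\ge 1$ appearing in $M_{\mathbf{t}_w}(\mathcal{Z}_w)$ or $M_{\rev(\mathbf{t}_w)}(\rev(\mathcal{Z}_w))$ is invertible for any $Z$, and every factor $M_0(Z)$ is invertible because $\mathcal{Z}_w$ assigns a nonsingular matrix to each position occupied by the index $0$; hence $M_{\mathbf{t}_w}(\mathcal{Z}_w)$ and $M_{\rev(\mathbf{t}_w)}(\rev(\mathcal{Z}_w))$ are invertible, so $(\Pi^n_{\mathbf{c}})^{\mathcal{B}}L_P(k-1,\mathbf{t}_w,\emptyset,\mathcal{Z}_w,\emptyset)\Pi^n_{\mathbf{c}}$ is strictly equivalent to $\mathcal{O}_1^P(\lambda)$. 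Comparing this with the explicit form \eqref{eq:aux_GFPR1} --- or, equivalently, tracking $B$ through the induction as a product of invertible block factors starting from $B=I_{sn}$ --- shows that $B$, and hence $B^{\mathcal{B}}=B^T$, is nonsingular. Theorem~\ref{thm:first family} then immediately yields that $(\Pi^n_{\mathbf{c}})^{\mathcal{B}}L_P(k-1,\mathbf{t}_w,\emptyset,\mathcal{Z}_w,\emptyset)\Pi^n_{\mathbf{c}}$ is a block-symmetric strong linearization of $P(\lambda)$. The step I expect to be the main obstacle is precisely the case analysis in the inductive step: stating and verifying, for each of the Type~I and Type~II situations (including the boundary index $j=0$), exactly which block-row and block-column move between the body and the wing under the elementary congruence, and checking that every wing entry remains in Kronecker form so that the resulting pencil never leaves $\langle\mathcal{O}_1^P\rangle$. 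This is where the combinatorics of the SIP, the column standard form, and the $\heads$ sets recalled in Section~\ref{sec:index-tuples} and the auxiliary lemmas of this appendix carry the weight, and it is the reason the complete argument, which parallels the proof of Theorem~8.1 in \cite{canonical_Fiedler}, is long and technical.
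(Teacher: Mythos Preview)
Your plan is correct and follows essentially the same inductive strategy as the paper's proof: induction on $|\mathbf{t}_w|$ with Theorem~\ref{aux;simple} as the base case, peeling off one index of $\mathbf{t}_w$ to obtain a congruence by a single elementary matrix $M_j(Z)$, and then a case analysis (the index $0$ versus $x\neq 0$, and within the latter Type~I versus Type~II relative to the shorter tuple) driven by Lemma~\ref{lemma:txSIP}, Proposition~\ref{prop:SIPheads} and Remark~\ref{rem: Type I and II heads}. Two small remarks: you peel the \emph{first} index (which indeed gives the clean factorization $\mathcal{L}=M_j\mathcal{L}'M_j$), while the paper is written as peeling the last one, but the ensuing block-row/block-column bookkeeping is identical; and your closing claim ``$B^{\mathcal{B}}=B^T$'' is not true in general for block matrices, so the nonsingularity of $B^{\mathcal{B}}$ should be obtained, as in the paper, by tracking $B$ and $B^{\mathcal{B}}$ separately through the induction.
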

\begin{proof}
We prove the result by induction on the number of indices in $\mathbf{t}_w$. 
When the tuple $\mathbf{t}_w$ is empty, then $L_P(k-1,\emptyset
,\emptyset,\emptyset,\emptyset)$ is the simple FPR with parameter $k-1$ associated with $P(\lambda)$ and the result follows by Theorem \ref{aux;simple}.

Assume that the result holds for tuples $\mathbf{t}_w$ with at most $\ell$ indices, with $\ell\geq  0$.
Let $\mathbf{t}_w = (\mathbf{t}'_w,x)$ be a tuple with $\ell+1$ indices, let $ \mathcal{Z}_w = (\mathcal{Z}'_w,Z_x)$ be a matrix assignment for $\mathbf{t}_w$,  let $\mathcal{L}(\lambda) := L_P(k-1,\mathbf{t}_w,\emptyset,\mathcal{Z}_w,\emptyset)$ and $\mathcal{L}'(\lambda): = L_P(k-1,\mathbf{t}'_w,\emptyset,\mathcal{Z}'_w,\emptyset)$. 
Clearly, we have $\mathcal{L}(\lambda) = M_x(Z_x)\mathcal{L}'(\lambda)M_x(Z_x)$.
 Since $\mathbf{t}'_w$ has $\ell$ indices, by the inductive hypothesis, there exists a block-permutation matrix $\Pi_{\boldsymbol\sigma}$ such that
 \[
  (\Pi_{\boldsymbol\sigma})^\mathcal{B} \mathcal{L}'(\lambda) \Pi_{\boldsymbol\sigma} = 
\left[\begin{array}{c|c}
M'(\lambda) & K_s(\lambda)^T(B')^\mathcal{B} \\ \hline
B'K_s(\lambda) & 0
\end{array}\right]\in\langle\mathcal{O}_1^P\rangle,
\]
for some matrix $B'$ and where $M'(\lambda)= M(\lambda;P)+ C' K_s(\lambda)+ K_s^T(C')^{\mathcal{B}}$ for some matrix  $C'$. 
Moreover, properties (a), (b), and (c) hold for $\mathcal{L}'(\lambda)$.
Additionally, notice that if $\mathcal{Z}_w$ is a nonsingular matrix assignment for $\mathbf{t}_w$, then $\mathcal{Z}^\prime$ is a nonsingular matrix assignment for $\mathbf{t}^\prime_w$. 
Hence, if $\mathcal{Z}_w$ is a nonsingular matrix assignment for $\mathbf{t}_w$, then the matrices $B^\prime$ and $(B^\prime)^\mathcal{B}$ are both nonsingular, and  $(\Pi_{\boldsymbol\sigma})^\mathcal{B} \mathcal{L}'(\lambda) \Pi_{\boldsymbol\sigma}$ is a strong linearization of $P(\lambda)$. 

By the definition of block-symmetric GFPR, the index tuple $(x,\mathbf{t}'_w,\allowbreak \mathbf{w}_{k-1},\mathbf{c}_{k-1},\rev(\mathbf{t}'_w),x)$  satisfies the SIP, which implies that the tuple $(\mathbf{t}'_w,\allowbreak \mathbf{w}_{k-1},\mathbf{c}_{k-1},\rev(\mathbf{t}'_w),x)$ also satisfies the SIP (recall Remark \ref{remark:SIP}).
Hence, since $x\in \{0:k-2\}$ by definition of $\mathbf{t}_w$, by parts (a) and (b)  applied to $\mathcal{L}'(\lambda)$, the $(k-x)$th block-column of $\mathcal{L}'(\lambda)$ is one of its wing block-columns relative to $(\boldsymbol\sigma, \boldsymbol\sigma)$, and the $(k-x)$th block-row of $\mathcal{L}'(\lambda)$ is one of its wing block-rows relative to $(\boldsymbol\sigma, \boldsymbol\sigma)$.
Now we have to consider two cases.

Case I: Assume, first, that $x=0$.
In this case, the action of pre- and post-multiplying $\mathcal{L}'(\lambda)$ by $M_0(Z_0)$ consists in multiplying the $k$th block-row and the $k$th block-column of $\mathcal{L}'(\lambda)$, which are, respectively,  a wing block-row and a  wing block- column  of $\mathcal{L}'(\lambda)$ relative to $(\boldsymbol\sigma, \boldsymbol\sigma)$ by  the matrix $Z_0$.
Thus, we obtain
  \[
  (\Pi_{\boldsymbol\sigma})^\mathcal{B} \mathcal{L}(\lambda) \Pi_{\boldsymbol\sigma}= 
\left[\begin{array}{c|c}
M'(\lambda) & K_s(\lambda)^TB^\mathcal{B} \\ \hline
BK_s(\lambda) & 0
\end{array}\right]\in\langle\mathcal{O}_1^P\rangle,
\] 
with  $B=\diag (I_{rn},Z_0,I_{tn})B'$  and $B^\mathcal{B}=(B')^\mathcal{B}\diag (I_r,Z_0,I_t)$, for some block-identity matrices $I_{rn}$ and $I_{tn}$.
Therefore, \eqref{eq:aux_GFPR1} holds with $\Pi_{\mathbf{c}}^n=\Pi_{\boldsymbol\sigma}^n$ and $C = C'$. 
Moreover, if $\mathcal{Z}_w$ is a nonsingular matrix assignment for $\mathbf{t}_w$, then $Z_0$ is nonsingular.
Thus, in this situation, the matrices $B$ and $B^\mathcal{B}$ are nonsingular, and, by Theorem \ref{thm:first family}, the pencil $(\Pi_{\mathbf{c}})^\mathcal{B} \mathcal{L}(\lambda) \Pi_{\mathbf{c}}$ is a strong linearization of $P(\lambda)$.

Now, we prove parts (a) and  (b)  for the case $x=0$.
Since $\Pi_{\mathbf{c}}^n=\Pi_{\boldsymbol\sigma}^n$, the wing block-columns and  the wing block-rows of $\mathcal{L}'(\lambda)$ and $\mathcal{L}(\lambda)$ relative to $(\mathbf{c},\mathbf{c})$ are located in the same positions.
Furthermore, the wing block-columns (resp. block-rows) of $\mathcal{L}'(\lambda)$ and $\mathcal{L}(\lambda)$ other than those in the $\mathbf{c}^{-1}(k)$th position are equal.
Since $x=0$ is  a Type II index relative to $(\mathbf{t}'_w,\mathbf{w}_{k-1},\mathbf{c}_{k-1}, \rev(\mathbf{t}'_w))$, by Remark \ref{rem: Type I and II heads}, we also have
\[
\mathrm{heads}(\mathbf{t}'_w,\mathbf{w}_{k-1},\mathbf{c}_{k-1}, \rev(\mathbf{t}'_w),0)=
\mathrm{heads}(\mathbf{t}'_w,\mathbf{w}_{k-1},\mathbf{c}_{k-1}, \rev(\mathbf{t}'_w))\cup\{0\}.
\]
Thus, parts (a) and (b) follow from Lemma \ref{lemma:txSIP}, provided that we check that the $k$th block-row (resp. block-column) of $L(\lambda)\Pi_{\mathbf{c}}^n$ (resp. $(\Pi_{\mathbf{c}}^n)^\mathcal{B}L(\lambda)$)  is not, generically, of the form $-e_{i}\otimes I_n+\lambda e_{i+1}\otimes I_n$ (resp. $-e_{i}^T\otimes I_n+\lambda e_{i+1}^T\otimes I_n$), for some $1\leq i\leq s$.
Indeed, the induction hypothesis implies that the $k$th block-row of $\mathcal{L}'(\lambda)\Pi_{\boldsymbol\sigma}^n$ is of the form  $-e_{i}\otimes I_n+\lambda e_{i+1}\otimes I_n$, which, in turn, implies that the $k$th block-row of $L(\lambda)\Pi_\mathbf{c}^n$ is  of the form $-e_{i}\otimes Z_0+\lambda e_{i+1}\otimes Z_0$. 
A similar argument holds for the $k$th block-column.
Thus, parts (a) and (b) are established.
Part (c) follows from the induction hypothesis together with the fact that the first block-rows and first block-columns of $\mathcal{L}(\lambda)$ and $\mathcal{L}'(\lambda)$ are, clearly, equal.

Case II: We assume, now, that $x\neq 0$.
We consider two sub-cases, namely, $x$ is a Type I or a Type II index relative to the tuple $(\mathbf{t}'_w,\mathbf{w}_{k-1},\mathbf{c}_{k-1}, \rev(\mathbf{t}'_w))$.

Assume, first, that $x$ is a Type II index.
By Proposition \ref{prop:SIPheads} and Lemma \ref{lemma:txSIP}, $(\mathbf{t}'_w,\mathbf{w}_{k-1},\mathbf{c}_{k-1}, \allowbreak \rev(\mathbf{t}'_w),x-1)$ satisfies the SIP.
This, in turn, implies that the $(k-x+1)$th block-row (resp. block-column) of $\mathcal{L}'(\lambda)$ is one of its wing block-row (resp. block-column) relative to $(\boldsymbol\sigma, \boldsymbol\sigma)$. 
Additionally, notice that  pre- and post-multiplying the pencil $\mathcal{L}'(\lambda)$ by $M_x(Z_x)$ affects only the $(k-x)$th and $(k-x+1)$th block-rows (resp. block-columns), which are both wing block-rows (resp. wing block-columns) of $\mathcal{L}'(\lambda)$ relative to $(\boldsymbol\sigma, \boldsymbol\sigma)$ by (a) and (b). 
 Then,  we can write 
\begin{equation}\label{partitionL'}
\mathcal{L}'(\lambda)= \begin{blockarray}{ccccc}
 & k-x & k-x+1 &  \\
\begin{block}{[c|cc|c]c}
  L_{1}'(\lambda) & \ell_1'(\lambda)^\mathcal{B} & \ell_2'(\lambda)^\mathcal{B} & L_{2}'(\lambda)^\mathcal{B} &  \\ \cline{1-4}
  \ell_1'(\lambda) & \phantom{\Big{(}} 0_n \phantom{\Big{(}} & 0_n & \ell_3'(\lambda) & k-x \\
  \ell_2'(\lambda) & \phantom{\Big{(}} 0_n \phantom{\Big{(}} & 0_n & \ell_4'(\lambda) & k-x+1 \\ \cline{1-4}
  L_{2}'(\lambda) & \phantom{\Big{(}} \ell_3'(\lambda)^\mathcal{B} \phantom{\Big{(}} & \ell_4'(\lambda)^\mathcal{B} & L_2(\lambda) &  \\ 
\end{blockarray}.
 \end{equation}
Note that, with the partition of $\mathcal{L}'(\lambda)$ given in (\ref{partitionL'}), the block of $\mathcal{L}'(\lambda)$ in position $(2,2)$ is zero due to the fact that, since the $(k-x)$th and $(k-x+1)$th block-rows and block-columns of $\mathcal{L}'(\lambda)$ are, respectively, wing block-rows and wing block-columns of $\mathcal{L}'(\lambda)$ relative to $(\boldsymbol{\sigma}, \boldsymbol{\sigma})$, after being permuted the last $s$ block-entries of each of them are zero.

This implies that the pencil $\mathcal{L}(\lambda)=M_x(Z_x) \mathcal{L}'(\lambda) M_x(Z_x)$ can be partitioned as
\[
\begin{blockarray}{ccccc}
 & k-x & k-x+1 &  \\
\begin{block}{[c|cc|c]c}
  L_{1}'(\lambda) & (Z_x\ell_1'(\lambda)+ \ell_2'(\lambda))^\mathcal{B} & \ell_1'(\lambda)^\mathcal{B} & L_{2}'(\lambda)^\mathcal{B} &  \\ \cline{1-4}
  Z_x \ell_1'(\lambda)+ \ell_2'(\lambda) & \phantom{\Big{(}} 0_n \phantom{\Big{(}}& 0_n & Z_x\ell_3'(\lambda)+\ell_4'(\lambda) & k-x \\
    \ell_1'(\lambda) & \phantom{\Big{(}} 0_n \phantom{\Big{(}} & 0_n & \ell_3'(\lambda) & k-x+1 \\ \cline{1-4}
  L_{2}'(\lambda) & (Z_x\ell_3'(\lambda)+\ell_4'(\lambda))^\mathcal{B} & \phantom{\Big{(}} \ell_3'(\lambda)^\mathcal{B} \phantom{\Big{(}} &  L_2(\lambda) &  \\ 
\end{blockarray},
 \]
 where we have used the fact that the only non-zero block-entries of $\ell_1'(\lambda)$ and $\ell_3'(\lambda)$ are equal to $I_n$ or $\lambda I_n$ (this follows from parts (a) and (b) of the inductive hypothesis).
Then, we easily obtain
 \[
 (\Pi_{\boldsymbol \sigma}^n)^\mathcal{B}\mathcal{L}(\lambda)\Pi_{\boldsymbol \sigma}^n= 
\left[\begin{array}{c|c}
M'(\lambda) & K_s(\lambda)^TB^\mathcal{B} \\ \hline
BK_s(\lambda) & 0
\end{array}\right]\in\langle\mathcal{O}_1^P\rangle,
\] 
with $M'(\lambda)= M(\lambda;P) + C K_s(\lambda) + K_s^T(\lambda)C^{\mathcal{B}}$ and  for some matrices $B$  and $B^{\mathcal{B}}$ of the form 
\[
B = 
\begin{bmatrix}
I_{rn} \\
& 0 & \cdots & I_n \\
& & I_{tn}\\
& I_n & \cdots & Z_x \\
& & & & I_{un}
\end{bmatrix}B^\prime, \quad \mbox{and} \quad 
B^\mathcal{B}=(B^\prime)^\mathcal{B}
\begin{bmatrix}
I_{rn} \\
& 0 & & I_n \\
& \vdots & I_{tn} & \vdots \\
 & I_n & & Z_x \\
 & & & & I_{un}
\end{bmatrix} 
\]
or of the form
\[
B = 
\begin{bmatrix}
I_{rn} \\
& Z_x & \cdots & I_n \\
& & I_{tn}\\
& 0 & \cdots & I_n \\
& & & & I_{un}
\end{bmatrix}B^\prime, \quad \mbox{and} \quad 
B^\mathcal{B}=(B^\prime)^\mathcal{B}
\begin{bmatrix}
I_{rn} \\
& Z_x & \cdots & I_n \\
& & I_{tn}\\
& 0 & \cdots & I_n \\
& & & & I_{un}
\end{bmatrix},
\]
for some block-identity matrices $I_{rn}$, $I_{tn}$ and $I_{un}$.
Therefore, \eqref{eq:aux_GFPR1} holds with $\Pi_\mathbf{c}^n=\Pi_{\boldsymbol\sigma}^n$ and $C = C'$. 
 Furthermore, if $\mathcal{Z}_w$ is a nonsingular matrix assignment for $\mathbf{t}_w$, then $B$ and $B^\mathcal{B}$ are nonsingular, since $B^\prime$ and $(B^\prime)^\mathcal{B}$ are. Recall that the elementary matrices $M_i(X)$, with $i\neq 0, k$ are nonsingular for all $X$.  
In this situation, the pencil $(\Pi_{\boldsymbol \sigma}^n)^\mathcal{B}\mathcal{L}(\lambda)\Pi_{\boldsymbol \sigma}^n$ is a strong linearization of $P(\lambda)$ by Theorem \ref{thm:first family}. 

Now, we prove parts (a), (b) and (c).
Recall that the $(k-x)$th and $(k-x+1)$th block-rows (resp. block-columns) of $\mathcal{L}'(\lambda)$ are two of its wing block-rows (resp. wing block-columns) relative to $(\boldsymbol \sigma, \boldsymbol \sigma)$.
Since $\Pi_\mathbf{c}^n=\Pi_{\boldsymbol\sigma}^n$, we also have that the wing block-rows (resp. wing block-columns) of $\mathcal{L}'(\lambda)$ and the wing block-rows (resp. wing block-columns) of $\mathcal{L}(\lambda)$ are  located at the same positions. 
Moreover, the wing block-rows (resp. wing block-columns) of $\mathcal{L}'(\lambda)$ and $\mathcal{L}(\lambda)$ other than those in  the $(k-x)$th and $(k-x+1)$th positions are equal.
Furthermore, the $(k-x+1)$th block-row (resp. block-column) of $\mathcal{L}(\lambda)\Pi_{\mathbf{c}}^n$ (resp. of $(\Pi_\mathbf{c}^n)^\mathcal{B}\mathcal{L}(\lambda)$) is of the form $-e_{i}^T\otimes I_n+\lambda e_{i+1}^T\otimes I_n$ (resp. $-e_{i}\otimes I_n+\lambda e_{i+1}\otimes I_n$), for some $1\leq i\leq s$, because, by the induction hypothesis, the $(k-x)$th block-row (resp. block-column) of $\mathcal{L}'(\lambda)\Pi_{\boldsymbol\sigma}^n$ is of this form.
Meanwhile, the $(k-x+1)$th block-row (resp. block-column) of $\mathcal{L}(\lambda)\Pi_\mathbf{c}^n$ (resp. of $(\Pi_\mathbf{c}^n)^\mathcal{B}\mathcal{L}(\lambda)$) is clearly not of this form generically.
Parts (a) and (b) follow from the preceding argument, together with
\[
\mathrm{heads}(\mathbf{t}'_w,\mathbf{w}_{k-1},\mathbf{c}_w, \rev(\mathbf{t}'_w),x)=\mathrm{heads}(\mathbf{t}'_w,\mathbf{w}_{k-1},\mathbf{c}_w, \rev(\mathbf{t}'_w))\cup\{x\},
\]
which follows from Remark \ref{rem: Type I and II heads},
and Lemma \ref{lemma:txSIP}. 
To prove part (c), just notice that pre- and post-multiplication by the matrix $M_x(Z_x)$ do not affect the first block-row and the first block-column of $\mathcal{L}'(\lambda)$, since $x\leq k-2$.

Assume, finally, that $x$ is a Type I index relative to  $(\mathbf{t}'_w,\mathbf{w}_{k-1},\mathbf{c}_{k-1}, \rev(\mathbf{t}'_w))$.
By Proposition \ref{prop:SIPheads} and Lemma \ref{lemma:txSIP}, the tuple $(\mathbf{t}'_w,\mathbf{w}_{k-1},\mathbf{c}_w, \rev(\mathbf{t}'_w),x-1)$ does not  satisfy the SIP.
Thus, by the inductive hypothesis, either the $(k-x+1)$th block-row (resp. block-column) of $\mathcal{L}'(\lambda)$ is one of its body block-rows (resp. body  block-columns) relative to $(\boldsymbol\sigma, \boldsymbol\sigma)$ or  the $(k-x+1)$th block-row (resp. block-column) of $\mathcal{L}'(\lambda)\Pi_{\boldsymbol\sigma}^n$ (resp. $(\Pi_{\boldsymbol\sigma}^n)^\mathcal{B}\mathcal{L}'(\lambda)$) is a wing block-row (resp. wing block-column) that is not of the form $-e_{i}^T \otimes I_n+\lambda e_{i+1}^T \otimes I_n$ (resp. $-e_{i}\otimes I_n+\lambda e_{i+1}\otimes I_n$), for some $1\leq i\leq s$. 
The proof that \eqref{eq:aux_GFPR1} holds in the case that the $(k-x+1)$th block-row (resp. block-column) is a wing block-row (resp. wing block-column) is very similar to the proof for the Type II index case in the paragraphs above.
So assume that the $(k-x+1)$th block-row (resp. block-column) of $L^\prime(\lambda)$ is a body block-row (resp. body block-column) relative to $(\boldsymbol{\sigma}, \boldsymbol{\sigma})$.
Since the $(k-x)$th block-row (resp. block-column) of $\mathcal{L}'(\lambda)$ is a wing block-row (resp. block-column) of $\mathcal{L}'(\lambda)$ relative to $(\boldsymbol{\sigma}, \boldsymbol{\sigma})$,  we can write 
\[
\mathcal{L}'(\lambda)=\begin{blockarray}{ccccc}
 & k-x & k-x+1 &  \\
\begin{block}{[c|cc|c]c}
  L_{1}'(\lambda) & \ell_1'(\lambda)^\mathcal{B} & \ell_2'(\lambda)^\mathcal{B} & L_{2}'(\lambda)^\mathcal{B} &  \\ \cline{1-4}
  \ell_1'(\lambda) & \phantom{\Big{(}}0_n\phantom{\Big{(}} & \ell_5'(\lambda) & \ell_3'(\lambda) & k-x \\
  \ell_2'(\lambda) & \phantom{\Big{(}}\ell_5'(\lambda)\phantom{\Big{(}} & \ell_6'(\lambda) & \ell_4'(\lambda) & k-x+1 \\ \cline{1-4}
  L_{2}'(\lambda) & \phantom{\Big{(}}\ell_3'(\lambda)^\mathcal{B}\phantom{\Big{(}} & \ell_4'(\lambda)^\mathcal{B} & L_2(\lambda) &  \\ 
\end{blockarray}.
 \]
 We recall again that  pre- and post-multiplying the pencil $\mathcal{L}'(\lambda)$ by $M_x(Z_x)$ affects only the $(k-x)$th and $(k-x+1)$th block-rows (resp. block-columns).
Therefore, the pencil $\mathcal{L}(\lambda)$ can be partitioned as
\[
\begin{blockarray}{ccccc}
 & k-x & k-x+1 &  \\
\begin{block}{[c|cc|c]c}
  L_{1}'(\lambda) & ( Z_x \ell_1'(\lambda)+ \ell_2'(\lambda))^\mathcal{B} & \ell_1'(\lambda)^\mathcal{B} & L_{2}'(\lambda)^\mathcal{B} &  \\ \cline{1-4}
  Z_x \ell_1'(\lambda)+ \ell_2'(\lambda) & \ell_5'(\lambda)Z_x+Z_x\ell_5'(\lambda)+\ell_6'(\lambda) & \phantom{\Big{(}}\ell_5'(\lambda)\phantom{\Big{(}} & Z_x\ell_3'(\lambda)+\ell_4'(\lambda) & k-x \\
  \ell_1'(\lambda) & \ell_5'(\lambda)^\mathcal{B} &\phantom{\Big{(}} 0 \phantom{\Big{(}}& \ell_3'(\lambda) & k-x+1 \\ \cline{1-4}
  L_{2}'(\lambda) & (Z_x\ell_3'(\lambda)+\ell_4'(\lambda))^\mathcal{B} & \phantom{\Big{(}}\ell_3'(\lambda)^\mathcal{B}\phantom{\Big{(}} &  L_2(\lambda) &  \\ 
\end{blockarray},
 \]
 where we have used the fact that the only non-zero block-entries of $\ell_1'(\lambda)$ and $\ell_3'(\lambda)$ are equal to $I_n$ or $\lambda I_n$ (this follows from parts (a) and (b) of the induction hypothesis).
Then, setting
\[
\mathbf{d} := (1:k-x-1,k-x+1,k-x,k-x+2:k),
\]
 we easily obtain that the pencil $(\Pi^n_\mathbf{d})^\mathcal{B}\mathcal{L}(\lambda)\Pi_\mathbf{d}^n$ can be partitioned as follows
 \[
\begin{blockarray}{ccccc}
 & k-x & k-x+1 &  \\
\begin{block}{[c|cc|c]c}
  L_{1}'(\lambda) & \ell_1'(\lambda)^\mathcal{B}& ( Z_x \ell_1'(\lambda)+ \ell_2'(\lambda))^\mathcal{B}  & L_{2}'(\lambda)^\mathcal{B} &  \\ \cline{1-4}
    \ell_1'(\lambda) & \phantom{\Big{(}} 0 \phantom{\Big{(}} & \ell_5'(\lambda)^\mathcal{B} & \ell_3'(\lambda) & k-x \\
  Z_x \ell_1'(\lambda)+ \ell_2'(\lambda) &  \phantom{\Big{(}}\ell_5'(\lambda)\phantom{\Big{(}} & \ell_5'(\lambda)Z_x+Z_x\ell_5'(\lambda)+\ell_6'(\lambda)& Z_x\ell_3'(\lambda)+\ell_4'(\lambda) & k-x+1 \\
 \cline{1-4}
  L_{2}'(\lambda) & \phantom{\Big{(}}\ell_3'(\lambda)^\mathcal{B}\phantom{\Big{(}} &  (Z_x\ell_3'(\lambda)+\ell_4'(\lambda))^\mathcal{B}&  L_2(\lambda) &  \\ 
\end{blockarray}.
 \]
Thus,
 \[
(\Pi_{\boldsymbol\sigma}^n)^\mathcal{B}(\Pi_\mathbf{d}^n)^\mathcal{B}\mathcal{L}(\lambda)\Pi_\mathbf{d}^n\Pi_{\boldsymbol\sigma}^n= 
\left[\begin{array}{c|c}
M(\lambda) & K_s(\lambda)^T(B^\prime)^\mathcal{B} \\ \hline
B^\prime K_s(\lambda) & 0
\end{array}\right]\in\langle\mathcal{O}_1^P\rangle,
\] 
where $M(\lambda)=M'(\lambda)+DK_s(\lambda)+K_s(\lambda)D^\mathcal{B}$, for some matrix $D$.
Therefore, \eqref{eq:aux_GFPR1} holds with $\Pi_\mathbf{c}^n=\Pi_\mathbf{d}^n\Pi_{\boldsymbol\sigma}^n$ and $B=B'$. 
 If $\mathcal{Z}_w$ is a nonsingular assignment for $\mathbf{t}_w$, then $B$ and $B^\mathcal{B}$ are nonsingular matrices, because $B^\prime$ and $(B^\prime)^\mathcal{B}$ are nonsingular, and  $(\Pi_{\mathbf{c}}^n)^\mathcal{B}\mathcal{L}(\lambda)\Pi_{\mathbf{c}}^n$ is a strong linearization of $P(\lambda)$ by Theorem \ref{thm:first family}.

We finish by showing that parts (a), (b) and (c) hold.
Recall that the $(k-x)$th  block-row (resp. block-column) of $\mathcal{L}'(\lambda)$ is one of its wing block-rows (resp. wing block-columns) relative to $(\boldsymbol\sigma, \boldsymbol\sigma)$, and that the $(k-x+1)$th block-row (resp. block-column) of $\mathcal{L}'(\lambda)$ is either one of its body block-rows (resp. body block-column)  or it is one of its wing block-rows (resp. wing block-columns) but not as those in part (b) (resp. part (a)).
Since $\Pi_\mathbf{c}^n=\Pi_\mathbf{d}^n\Pi_{\boldsymbol\sigma}^n$, we also have that the wing block-rows (resp. wing block-columns) of $\mathcal{L}'(\lambda)$  relative to $(\boldsymbol\sigma, \boldsymbol\sigma)$ and the wing block-rows (resp. wing block-columns) of $\mathcal{L}(\lambda)$ relative to $(\mathbf{c}, \mathbf{c})$ other than those in positions $(k-x)$th and $(k-x+1)$th are equal and located at the same positions. 
Moreover, the wing block-row (resp. wing block-column) of $\mathcal{L}'(\lambda)$   in the $(k-x)$th position equals the wing block-row (resp. wing block-column) of $\mathcal{L}(\lambda)$ in the $(k-x+1)$th position.
Then, parts (a) and (b) follow from the preceding facts, together with
\[
\mathrm{heads}(\mathbf{t}'_w,\mathbf{w}_{k-1},\mathbf{c}_{k-1}, \rev(\mathbf{t}'_w),x)=(\mathrm{heads}(\mathbf{t}'_w,\mathbf{w}_{k-1},\mathbf{c}_{k-1}, \rev(\mathbf{t}'_w))\cup\{x\})\setminus \{x-1\},
\]
which follows from Remark \ref{rem: Type I and II heads}, and Lemma \ref{lemma:txSIP}. 
To prove part (c), just notice again that pre- and post-multiplication by the matrix $M_x(Z_x)$ do not affect the first block-row and the first block-column of $\mathcal{L}'(\lambda)$, since $x\leq k-2$.
\end{proof}

\begin{remark}\label{remark:permutation1}
We note that part (c) in Theorem \ref{thm:aux_GFPR1} implies that the block-permutation $\Pi_{\mathbf{c}}^n$ is of the form $I_n\oplus \Pi_{\mathbf{\widetilde{c}}}^n$, for some permutation  $\mathbf{\widetilde{c}}$ of the set $\{1:k-1\}$.
\end{remark}

As a consequence of the previous theorem, we obtain Theorem \ref{thm:aux_GFPR2}, which shows a structural result for another subclass of block-symmetric GFPR.
In order to prove Theorem \ref{thm:aux_GFPR2}, we will make use of the following immediate property of elementary matrices:
\begin{equation}\label{eq:aux elementary matrix}
R_k M_{-i}(B)R_k= M_{k-i}(B), \quad \mbox{for $i=1:k$ and arbitrary $B$},
\end{equation}
where $R_k$ is the block sip-matrix \eqref{eq:sip matrix}.
\begin{theorem}\label{thm:aux_GFPR2}
Let $P(\lambda)=\sum_{i=0}^k A_i\lambda^i\in\mathbb{F}[\lambda]^{n\times n}$ with $k$ odd, let $s=(k-1)/2$, and let 
 \[
L_P(0,\emptyset,\mathbf{t}_v,\emptyset,\mathcal{Z}_v)= M_{\mathbf{t}_v}(\mathcal{Z}_v)(\lambda M^P_{\mathbf{v}_0}-M^P_{0})M^P_{-k+\mathbf{c}_{k-1}}M_{\rev(\mathbf{t}_v)}(\rev(\mathcal{Z}_v))
\]
be a block-symmetric GFPR associated with $P(\lambda)$.

Then, there exists a permutation $\Pi_\mathbf{c}^n$ such that
\begin{equation}\label{eq:aux_GFPR2}
 (\Pi_\mathbf{c}^n)^\mathcal{B} L_P(0,\emptyset,\mathbf{t}_v,\emptyset,\mathcal{Z}_v) \Pi_\mathbf{c}^n = 
\left[\begin{array}{c|c}
0 &  BK_s(\lambda) \\ \hline
K_s(\lambda)^TB^\mathcal{B}  & \phantom{\Big{(}} M(\lambda;P)+CK_s(\lambda)+K_s(\lambda)^TC^\mathcal{B} \phantom{\Big{(}}
\end{array}\right],
\end{equation}
for some matrices $B$ and $C$.
Moreover, the following statements hold:
\begin{itemize}
\item[\rm (a)] The last block-column of \eqref{eq:aux_GFPR2} is the last block-row of $(\Pi_\mathbf{c}^n)^\mathcal{B}L_P(0,\emptyset,\mathbf{t}_v,\emptyset,\mathcal{Z}_v)$.
\item[\rm (b)] The last block-row of \eqref{eq:aux_GFPR2} is the last block-column of  $L_P(0,\emptyset,\mathbf{t}_v,\emptyset,\mathcal{Z}_v)\Pi_\mathbf{c}^n$.
\item[\rm (c)] The block-entry of  $(\Pi_\mathbf{c}^n)^\mathcal{B} L_P(0,\emptyset,\mathbf{t}_v,\emptyset,\mathcal{Z}_v) \Pi_\mathbf{c}^n$ in position $(k,k)$ equals $\lambda A_1+A_0$.
\end{itemize}
Furthermore, if $\mathcal{Z}_v$ is a nonsingular matrix assignment for $\mathbf{t}_v$, then $B$ and $B^\mathcal{B}$ are nonsingular.
\end{theorem}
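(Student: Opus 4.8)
The plan is to deduce Theorem~\ref{thm:aux_GFPR2} from Theorem~\ref{thm:aux_GFPR1} by conjugating $L_P(0,\emptyset,\mathbf{t}_v,\emptyset,\mathcal{Z}_v)$ with the block-sip matrix $R_k$ of \eqref{eq:sip matrix} and, simultaneously, \emph{reversing} the pencil; for a pencil $\mathcal{L}(\lambda)=\lambda L_1+L_0$ I write $\rev\mathcal{L}(\lambda):=\lambda L_0+L_1$ (reusing the symbol $\rev$ that above denotes reversal of index tuples). The point is that these two operations carry a block-symmetric GFPR with parameter $h=0$ for $P(\lambda)$ onto $-\rev$ of a block-symmetric GFPR with parameter $h=k-1$ for the auxiliary (grade-$k$) polynomial $Q(\lambda):=-\lambda^{k}P(1/\lambda)$, whose coefficients are $Q_i=-A_{k-i}$, and such a pencil is exactly the kind handled by Theorem~\ref{thm:aux_GFPR1}.

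First I would record how the elementary matrices behave under conjugation by $R_k$: extending \eqref{eq:aux elementary matrix} to all indices one has $R_kM_i(B)R_k=M_{\phi(i)}(B)$ for every $n\times n$ matrix $B$ and every $i\in\{-k:k-1\}$, where $\phi$ is the involution of $\{-k:k-1\}$ with $\phi(i)=i-k$ if $i\ge 0$ and $\phi(i)=i+k$ if $i<0$; in the abbreviated notation ($M_i^P=M_i(-A_i)$, $M_{-i}^P=M_{-i}(A_i)$) this reads $R_kM_i^PR_k=M_{\phi(i)}^Q$. Recalling that for $h=0$ one has $\mathbf{w}_0=(0)$, $\mathbf{c}_0=\emptyset$ and $\mathbf{v}_0=-k+\mathbf{w}_{k-1}$, and that $\phi$ maps $-k+\mathbf{w}_{k-1}\mapsto\mathbf{w}_{k-1}$, $-k+\mathbf{c}_{k-1}\mapsto\mathbf{c}_{k-1}$, $0\mapsto-k$ and $\mathbf{t}_v\mapsto k+\mathbf{t}_v$, applying $R_k(\cdot)R_k$ factor by factor gives
\[
R_kL_P(0,\emptyset,\mathbf{t}_v,\emptyset,\mathcal{Z}_v)R_k=M_{\phi(\mathbf{t}_v)}(\mathcal{Z}_v)\bigl(\lambda M^Q_{\mathbf{w}_{k-1}}-M^Q_{-k}\bigr)M^Q_{\mathbf{c}_{k-1}}M_{\rev(\phi(\mathbf{t}_v))}(\rev(\mathcal{Z}_v)).
\]
Since $\lambda M^Q_{\mathbf{w}_{k-1}}-M^Q_{-k}=-\rev\bigl(\lambda M^Q_{-k}-M^Q_{\mathbf{w}_{k-1}}\bigr)$ and $\rev$ commutes with left and right multiplication by constant matrices, the right-hand side equals $-\rev\bigl(L_Q(k-1,\phi(\mathbf{t}_v),\emptyset,\mathcal{Z}_v,\emptyset)\bigr)$; here $\phi(\mathbf{t}_v)=k+\mathbf{t}_v$ has indices in $\{0:k-2\}$ and $(\phi(\mathbf{t}_v),\mathbf{w}_{k-1},\mathbf{c}_{k-1},\rev(\phi(\mathbf{t}_v)))$ satisfies the SIP, being the shift by $k$ of a tuple that does so by hypothesis while the SIP is shift invariant (Remark~\ref{remark:SIP}), so $L_Q(k-1,\phi(\mathbf{t}_v),\emptyset,\mathcal{Z}_v,\emptyset)$ is a bona fide block-symmetric GFPR.

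Next I would apply Theorem~\ref{thm:aux_GFPR1} to this pencil: there is a permutation $\mathbf{c}'$ of $\{1:k\}$, with $\Pi_{\mathbf{c}'}^n=I_n\oplus\Pi_{\widetilde{\mathbf{c}}'}^n$ by Remark~\ref{remark:permutation1}, such that $(\Pi_{\mathbf{c}'}^n)^{\mathcal{B}}L_Q(k-1,\phi(\mathbf{t}_v),\emptyset,\mathcal{Z}_v,\emptyset)\Pi_{\mathbf{c}'}^n$ is of the form \eqref{eq:aux_GFPR1} for $Q$, with body $M(\lambda;Q)+C'K_s(\lambda)+K_s(\lambda)^{T}(C')^{\mathcal{B}}$ and wing coefficient $B'$, and with $B'$ and $(B')^{\mathcal{B}}$ nonsingular whenever $\mathcal{Z}_v$ is a nonsingular matrix assignment for $\mathbf{t}_v$ (equivalently for $\phi(\mathbf{t}_v)$, since nonsingularity of a matrix assignment depends only on the positions occupied by the indices $0$ and $-k$ and $\phi$ carries the $(-k)$-positions of $\mathbf{t}_v$ onto the $0$-positions of $\phi(\mathbf{t}_v)$). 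Two elementary identities then close the argument: $R_s\bigl(-\rev K_s(\lambda)\bigr)R_{s+1}=K_s(\lambda)$ (reversal turns the lower bidiagonal $L_s(\lambda)$ into an upper bidiagonal pencil, which the reversal permutations flip back) and $R_{s+1}\bigl(-\rev M(\lambda;Q)\bigr)R_{s+1}=M(\lambda;P)$ (using $Q_i=-A_{k-i}$). Applying $-\rev$ to the pencil \eqref{eq:aux_GFPR1} for $Q$ and then conjugating by $R_k$ --- which sends the body, sitting in the top-left, to the bottom-right --- yields a pencil on the right-hand side of \eqref{eq:aux_GFPR2} for $P$, with $B=R_sB'R_s$ and $C=R_{s+1}C'R_s$, so $B,B^{\mathcal{B}}$ are nonsingular precisely when $B',(B')^{\mathcal{B}}$ are. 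Putting these computations together and using $R_k^{\mathcal{B}}=R_k$ together with the fact that $\rev$ commutes with congruence by constant matrices, I obtain that $(\Pi_{\mathbf{c}}^n)^{\mathcal{B}}L_P(0,\emptyset,\mathbf{t}_v,\emptyset,\mathcal{Z}_v)\Pi_{\mathbf{c}}^n$ equals the pencil on the right-hand side of \eqref{eq:aux_GFPR2}, where $\Pi_{\mathbf{c}}^n:=R_k\Pi_{\mathbf{c}'}^nR_k$ is the required block-permutation matrix. Parts (a)--(c) are then read off by transporting Theorem~\ref{thm:aux_GFPR1}(c) through the reversal (which permutes neither block-rows nor block-columns) and through conjugation by $R_k$ (which reverses their order): ``first body block-row and block-column, with $(1,1)$-block $\lambda Q_k+Q_{k-1}$'' for $Q$ becomes ``last body block-row and block-column, with $(k,k)$-block $-(\lambda Q_{k-1}+Q_k)=\lambda A_1+A_0$'' for $P$.

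I expect the main obstacle to be the bookkeeping in this last step: verifying that $-\rev$ followed by conjugation by $R_k$ sends the explicit block structure \eqref{eq:aux_GFPR1} for $Q$ exactly onto the explicit block structure \eqref{eq:aux_GFPR2} for $P$ --- that is, that $K_s(\lambda)$ reproduces itself under reversal up to the permutations $R_s,R_{s+1}$, that $M(\lambda;Q)$ turns into $M(\lambda;P)$, and that $C',B'$ become admissible $C,B$ --- and then carefully tracking the block-row and block-column labels through both operations to establish (a)--(c). The remaining items (the index-tuple combinatorics and the SIP check, the transfer of nonsingularity) are routine.
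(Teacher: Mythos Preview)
Your proposal is correct and follows essentially the same route as the paper's proof: conjugate $L_P(0,\emptyset,\mathbf{t}_v,\emptyset,\mathcal{Z}_v)$ by the block-sip matrix $R_k$ and apply $-\rev$ to turn it into a block-symmetric GFPR with parameter $k-1$ for $-\rev P(\lambda)$, invoke Theorem~\ref{thm:aux_GFPR1}, then undo the transformations using the identities $R_s(-\rev K_s(\lambda))R_{s+1}=K_s(\lambda)$ and $R_{s+1}(-\rev M(\lambda;Q))R_{s+1}=M(\lambda;P)$ to obtain \eqref{eq:aux_GFPR2} with $\Pi_{\mathbf{c}}^n=R_k\Pi_{\mathbf{c}'}^nR_k$, $B=R_sB'R_s$, $C=R_{s+1}C'R_s$. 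The paper carries out exactly these steps (with $\widehat{P}:=-\rev P$ in place of your $Q$ and $\mathbf{t}_v+k$ in place of your $\phi(\mathbf{t}_v)$), and derives (a)--(c) in the same way by transporting part~(c) of Theorem~\ref{thm:aux_GFPR1} through $R_k$-conjugation and reversal.
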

\begin{proof}
For simplicity, let  $L_P(\lambda):=L_P(0,\emptyset,\mathbf{t}_v,\emptyset,\mathcal{Z}_v)$.
Let $\widehat{P}(\lambda):=-\rev P(\lambda)$, and let us consider the pencil $\widehat{L}(\lambda):=R_k \rev(-L_P(\lambda))R_k $.
By using \eqref{eq:aux elementary matrix} together with $R_k^{-1}=R_k$ and taking into account that the indices in the  tuples $\mathbf{t}_v, \mathbf{v}_0, -k+\mathbf{c}_{k-1}$ are in $\{-k:-1\}$, we obtain without much difficulty
\begin{align*}
\widehat{L}(\lambda)=& 
R_kM_{\mathbf{t}_v}(\mathcal{Z}_v)R_k(\lambda R_kM^P_{0}R_k-R_k M^P_{\mathbf{v}_0}R_k)R_kM^P_{-k+\mathbf{c}_{k-1}}R_kR_kM_{\rev(\mathbf{t}_v)}(\rev(\mathcal{Z}_v))R_k= \\&
M_{\mathbf{t}_v+k}(\mathcal{Z}_v)(\lambda M_{-k}^{\widehat{P}}-M_{\mathbf{v}_0+k}^{\widehat{P}})M_{\mathbf{c}_{k-1}}^{\widehat{P}}M_{\rev(\mathbf{t}_v)+k}(\rev(\mathcal{Z}_v)),
\end{align*}
which is the block-symmetric GFPR  $L_{\widehat{P}}(k-1, \mathbf{t}_v+k, \emptyset, \mathcal{Z}_v, \emptyset)$ associated with $\widehat{P}(\lambda)$.
 Note that this pencil is of the kind  of  GFPR considered in Theorem \ref{thm:aux_GFPR1}, because if $(\mathbf{t}_v,\mathbf{v}_0,-k+\mathbf{c}_{k-1},\rev(\mathbf{t}_v))$ satisfies the SIP so does $(\mathbf{t}_v+k,\mathbf{v}_0+k,\mathbf{c}_{k-1},\rev(\mathbf{t}_v))+k)$.
Hence, by Theorem \ref{thm:aux_GFPR1}, there exists a permutation $\mathbf{c}^\prime$ of $\{1:k\}$ such that
\[
(\Pi_{\mathbf{c}^\prime}^n)^\mathcal{B}\widehat{L}(\lambda)\Pi_{\mathbf{c}^\prime}^n =
\left[\begin{array}{c|c} 
M(\lambda;\widehat{P})+ C^\prime K_s(\lambda)+K_s(\lambda)^T(C^\prime)^\mathcal{B} & K_s(\lambda)^T (B^\prime)^\mathcal{B} \\ \hline
B^\prime K_s(\lambda) & 0
\end{array}\right],
\]
for some matrices  $B'$ and $C'$. 
Let $\Pi_\mathbf{c}^n:=R_k\Pi_{\mathbf{c}^\prime}^nR_k$.
Taking into account that $L_P(\lambda)= \allowbreak- \rev(R_k \widehat{L}(\lambda) R_k)$, we, then, have
\begin{align*}
&(\Pi_{\mathbf{c}}^n)^\mathcal{B}L_P(\lambda)\Pi_{\mathbf{c}}^n=-\rev( R_k (\Pi_{\mathbf{c}^\prime}^n)^\mathcal{B}\widehat{L}(\lambda) \Pi_{\mathbf{c}^\prime}^n R_k ) = \\
&-\rev \left( R_k \left[\begin{array}{c|c} 
M(\lambda;\widehat{P})+ C^\prime K_s(\lambda)+K_s(\lambda)^T(C^\prime)^\mathcal{B} & K_s(\lambda)^T (B^\prime)^\mathcal{B} \\ \hline
B^\prime K_s(\lambda) & 0
\end{array}\right] R_k \right)=\\&
\left[ \begin{array}{c|c} 
0 & -\rev( R_s B^\prime K_s(\lambda)R_{s+1})\\ \hline
-\rev(R_s B^\prime K_s(\lambda)R_{s+1})^\mathcal{B} & \phantom{\Big{(}} -\rev(R_{s+1}(M(\lambda;\widehat{P})+ C^\prime K_s(\lambda)+K_s(\lambda)^T(C^\prime)^\mathcal{B})R_{s+1}) \phantom{\Big{)}}
\end{array}\right].
\end{align*}
Then, to prove the  first claim of the theorem, it suffices to notice that the following two equalities hold:
\[
-\rev( R_s B^\prime K_s(\lambda)R_{s+1}) = R_s B^\prime (-\rev( K_s(\lambda)R_{s+1}))=  BK_s(\lambda),
\]
where $B:=R_s B^\prime R_s$, and, 
\begin{align*}
-&\rev(R_{s+1}(M(\lambda;\widehat{P})+ C^\prime K_s(\lambda)+K_s(\lambda)^T(C^\prime)^\mathcal{B})R_{s+1}) \\
&=-\rev(R_{s+1}M(\lambda;\widehat{P})R_{s+1})
+ R_{s+1}C^\prime(-\rev(K_s(\lambda)R_{s+1})) \\
& \hspace{6cm}+(-\rev(K_s(\lambda)R_{s+1}))^\mathcal{B} (R_{s+1}C^\prime)^{\mathcal{B}}\\
&= M(\lambda;P)+CK_s(\lambda)+K_s(\lambda)^TC^\mathcal{B},
\end{align*}
where $C:=R_{s+1}C'R_s$. 

Now we prove parts (a), (b), and (c). 
Let $M^\prime(\lambda) := M(\lambda;\widehat{P})+ C^\prime K_s(\lambda)+K_s(\lambda)^T(C^\prime)^\mathcal{B}$ and $M(\lambda):= M(\lambda;P)+CK_s(\lambda)+K_s(\lambda)^TC^\mathcal{B}$.
Notice, first, that part (c) in Theorem \ref{thm:aux_GFPR1} implies that the first block-row  and the first block-column of the pencil $\widehat{L}(\lambda)$ are, respectively,  the first body block-row and the first  body block-column  of the pencil $ \widehat{L}(\lambda)$ relative to $(\mathbf{c'}, \mathbf{c'})$,  and that the block-entry in position $(1,1)$ of $M^\prime(\lambda)$ is $- \lambda A_{0}-A_1$. 
Since $M(\lambda)=-\rev(R_{s+1}M^\prime (\lambda)R_{s+1})$, the block-entry in position $(s+1, s+1)$ of $M(\lambda)$ is $\lambda A_1 + A_0$, which is part (c).  
Moreover, since $L_P(\lambda) = - \rev(R_k \widehat{L}(\lambda) R_k)$ and $( \Pi^n_\mathbf{c})^{\mathcal{B}}L_P(\lambda) \Pi^n_\mathbf{c} = - \rev (R_k (\Pi^n_{\mathbf{c}^\prime})^{\mathcal{B}} \widehat{L}(\lambda) \Pi^n_{\mathbf{c}^\prime} R_k)$, we deduce from part (c) in Theorem  \ref{thm:aux_GFPR1}  that the last block-column and the last block-row of $( \Pi^n_\mathbf{c})^{\mathcal{B}}L_P(\lambda) \Pi^n_\mathbf{c}$ are, respectively, the last body block-column and the last body block-row of $( \Pi^n_\mathbf{c})^{\mathcal{B}}L_P(\lambda)$ and $L_P(\lambda)\Pi^n_\mathbf{c}$.
Thus, claims (a) and (b) follow.

 Finally, notice that if $\mathcal{Z}_v$ is a nonsingular matrix assignment for $\mathbf{t}_v$, then $\rev(\mathcal{Z}_v)$ is a nonsingular matrix assignment for $\rev(\mathbf{t}_v)$.
Hence, if $\mathcal{Z}_v$ is a nonsingular matrix assignment for $\mathbf{t}_v$, then $B^\prime$ and $(B^\prime)^\mathcal{B}$ are nonsingular matrices by Theorem \ref{thm:aux_GFPR1}.
Thus,  $B=R_sB^\prime R_s$ and $B^\mathcal{B}=R_s(B^\prime)^\mathcal{B}R_s$ are nonsingular matrices, because $R_s$ is nonsingular.
\end{proof}

\begin{remark}\label{remark:permutation2}
We note that  Theorem \ref{thm:aux_GFPR2} implies that  $\Pi_{\mathbf{c}}^n$ is of the form $\Pi_{\mathbf{\widetilde{c}}}^n\oplus I_n$, for some permutation $\mathbf{\widetilde{c}}$ of the set $\{1:k-1\}$.
\end{remark}

 \subsubsection{Auxiliary results for the even degree case}

We first prove that  Theorem \ref{thm:main_GFPR} holds for the block-symmetric GFPR called the simple FPR with parameter $k-1$ in \cite{Her-GFPR}.
\begin{theorem}\label{aux;simple2}
Let $P(\lambda)=\sum_{i=0}^k A_i\lambda^i\in\mathbb{F}[\lambda]^{n\times n}$ of even degree $k$, let $P_{k-1}(\lambda)= \sum_{i=0}^{k-1} A_{i+1}\lambda^i$, let $s=(k-2)/2$, and let $F_k(\lambda)=(\lambda M^P_{-k}-M^P_{\mathbf{w}_{k-1}})M^P_{\mathbf{c}_{k-1}}$.
Let $\mathbf{c}$ be the permutation of $\{1:k\}$ given by $(1,2,4,\hdots,k,3,5,\hdots,k-1)$. 
Then,
\begin{equation}\label{eq:main_FPR_even}
(\Pi_{\mathbf{c}}^n)^\mathcal{B} F_k(\lambda) \Pi_{\mathbf{c}}^n = \left[\begin{array}{c|c:c}
   M(\lambda;P_{k-1})+BK_s(\lambda)+K_s(\lambda)B^\mathcal{B} & \begin{matrix} 0 \\ A_0 \end{matrix}  & K_s(\lambda)^T  \\ \hdashline
   \begin{matrix} 0 & A_0 \end{matrix}  & \lambda A_0 & 0\\ \hline
  K_s(\lambda)& 0 &  0
   \end{array}\right]\in\langle \mathcal{E}_1^P\rangle,
\end{equation}
for some matrix $B$.
Moreover, the following statements hold:
\begin{itemize}
\item[\rm (a)] The wing block-columns of  $(\Pi_{\mathbf{c}}^n)^{\mathcal{B}} F_{k}(\lambda)$ relative to $(\mathbf{id}, \mathbf{c})$ that are  of the form $-e_i \otimes I_n + \lambda e_{i+1} \otimes I_n$, for $1\leq i \leq s$, are  located in  positions $k-j$, where $j\in \{0:k-2\}$ and $(\mathbf{w}_{k-1},\mathbf{c}_{k-1}, j)$ satisfies the SIP.

\item[\rm (b)] The wing block-rows of $F_{k}(\lambda)\Pi_{\mathbf{c}}^n$ relative to $(\mathbf{c}, \mathbf{id})$ that are of the form $-e_i^T \otimes I_n + \lambda e_{i+1}^T \otimes I_n$, for $1\leq i \leq s$,  and are  located in  positions $k-j$, where $j\in \{0:k-2\}$ and $(\mathbf{w}_{k-1},\mathbf{c}_{k-1}, j)$ satisfies the SIP.

\item[\rm (c)]  The first block-row and the first block-column of $F_{k}(\lambda)$ are, respectively, the first body block-row and the first body block-column of $F_{k}(\lambda)$ relative to $(\mathbf{c}, \mathbf{c})$.
 Moreover, the block-entry of $(\Pi_{\mathbf{c}}^n)^\mathcal{B} F_k(\lambda) \Pi_{\mathbf{c}}^n$ in position $(1,1)$ equals $\lambda A_k + A_{k-1}$.
 
 \item[\rm (d)] The exceptional block-column of  $(\Pi_{\mathbf{c}}^n)^{\mathcal{B}} F_{k}(\lambda)$ relative to $(\mathbf{id}, \mathbf{c})$ is located in position  $k$, and the exceptional block-row of $F_k(\lambda)\Pi_{\mathbf{c}}^n$ relative to $(\mathbf{c}, \mathbf{id})$ is located in position $k$.
\end{itemize}
Furthermore, if $A_0$ is nonsingular, the pencil $(\Pi_{\mathbf{c}}^n)^\mathcal{B} F_k(\lambda) \Pi_{\mathbf{c}}^n$ is a strong linearization of $P(\lambda)$.
\end{theorem}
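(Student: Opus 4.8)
The plan is to follow, almost verbatim, the argument used to prove Theorem \ref{aux;simple}, adapting it to the even-degree setting. First I would recall from \cite[Section 8]{Her-GFPR} the explicit block-structure of $F_k(\lambda)=(\lambda M^P_{-k}-M^P_{\mathbf{w}_{k-1}})M^P_{\mathbf{c}_{k-1}}$ when $k$ is even. This pencil has the same ``staircase'' pattern as the odd-degree $F_k(\lambda)$ displayed in \eqref{Fodd} — alternating $2\times 2$ diagonal blocks of the form $\lambda A_j+A_{j-1}$, zero blocks, and the $\pm I_n$, $\lambda I_n$ entries interlaced with the off-diagonal $A_j$ blocks — but the bottom-right corner differs: instead of terminating with a $2\times2$ block as in the odd case, it terminates with a row/column carrying $\lambda A_1+A_0$ together with the off-diagonal $A_0$ blocks, which is exactly the data that, after permutation, becomes the exceptional row and column of \eqref{eq:main_FPR_even}. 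As is done with $F_3$ in the proof of Theorem \ref{aux;simple}, I would write out the base case $k=2$ (and perhaps $k=4$) explicitly to anchor the bookkeeping.

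Next I would apply the block-permutation $\Pi_{\mathbf{c}}^n$ associated with $\mathbf{c}=(1,2,4,\ldots,k,3,5,\ldots,k-1)$ on both sides and verify by direct inspection that the resulting pencil has precisely the shape \eqref{eq:main_FPR_even}, reading off the matrix $B$ explicitly: it is strictly lower block-triangular with nonzero block-entries $-A_{k-2},-A_{k-4},\ldots,-A_2$ on the first block-subdiagonal, in complete analogy with the matrix $C$ appearing in the proof of Theorem \ref{aux;simple}. Since $\mathbf{c}$ sends the positions $2,4,\ldots,k$ to the first $s+2$ slots and the positions $3,5,\ldots,k-1$ to the last $s$ slots, the $\pm I_n$ and $\lambda I_n$ entries of $F_k(\lambda)$ collect into the two copies $K_s(\lambda)$ and $K_s(\lambda)^T$, while the block $\lambda A_1+A_0$ and its neighbouring $A_0$ blocks form the $\begin{bmatrix}0 & A_0\end{bmatrix}$, $\begin{bmatrix}0\\ A_0\end{bmatrix}$ and diagonal entries of the exceptional row/column. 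This computation simultaneously establishes parts (c) and (d). At this step I would be careful to reconcile the sign of the exceptional diagonal entry with the definition of $\mathcal{E}_1^P(\lambda)$ in \eqref{eq:E1} and of $\langle\mathcal{E}_1^P\rangle$ in \eqref{eq:third_family}, absorbing any discrepancy, if needed, into a further block-congruence by a diagonal $\pm I_n$ pattern.

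Parts (a) and (b) then follow exactly as in the odd case: the wing block-columns and block-rows of $F_k(\lambda)$ relative to $(\mathbf{c},\mathbf{c})$ occupy the block positions $k-j$ with $k-j\in\{3,5,\ldots,k-1\}$, and by Lemmas \ref{wcSIP} and \ref{lemma:txSIP} these correspond precisely to the values $j\in\{0:k-2\}$ for which $(\mathbf{w}_{k-1},\mathbf{c}_{k-1},j)$ satisfies the SIP, using that $\csf(\mathbf{w}_{k-1},\mathbf{c}_{k-1})$ is the explicit column-standard-form tuple whose set of heads I would compute once and for all. Finally, for the last claim I would invoke Theorem \ref{thm:third family}: the body of \eqref{eq:main_FPR_even} satisfies the AS condition for $P(\lambda)$ — the $\mathcal{E}_1^P(\lambda)$ part does so by construction (see the discussion after \eqref{eq:E1}), and adding $BK_s(\lambda)+K_s(\lambda)^TB^\mathcal{B}$ leaves all antidiagonal sums unchanged by Remark \ref{rem:thm3.9} — and the relevant $D$-block in the sense of \eqref{eq:third_family} is $I_{sn}$, hence nonsingular; so with $A_0$ nonsingular, Theorem \ref{thm:third family} yields that $(\Pi_{\mathbf{c}}^n)^\mathcal{B}F_k(\lambda)\Pi_{\mathbf{c}}^n$ is a strong linearization of $P(\lambda)$.

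The main obstacle I anticipate is entirely one of bookkeeping: writing down the correct even-degree block-structure of $F_k(\lambda)$, which differs from \eqref{Fodd} near the bottom-right corner and requires careful tracking of the $\lambda A_1+A_0$ block together with the off-diagonal $A_0$ blocks and their signs, and then checking that the chosen permutation places each of these blocks into the correct slot of \eqref{eq:main_FPR_even} — in particular pinning down the exact location of the exceptional row and column and the signs of the $A_0$ entries. Once the explicit form of $F_k(\lambda)$ is established, everything else is a direct, if tedious, verification combined with the SIP lemmas already available in the excerpt.
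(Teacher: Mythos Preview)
Your proposal is correct and follows essentially the same approach as the paper's own proof: write down the explicit block-structure of $F_k(\lambda)$ for $k$ even (the paper displays $F_4(\lambda)$ and the general $k\ge 6$ form), check the small cases directly, then verify \eqref{eq:main_FPR_even} by direct inspection with the same subdiagonal matrix $B=\mathrm{diag}(-A_{k-2},\ldots,-A_2)$ shifted one block down, and derive (a)--(d) and the linearization claim via Lemmas \ref{wcSIP}, \ref{lemma:txSIP} and Theorem \ref{thm:third family}. One small slip to watch: in the even case there is no block $\lambda A_1+A_0$ in $F_k(\lambda)$; the bottom-right corner consists of the entry $-\lambda A_0$ in position $(k,k)$ and the two off-diagonal $A_0$ blocks in positions $(k-2,k)$ and $(k,k-2)$, and it is these three that become the exceptional row/column after permutation (so the diagonal entry in \eqref{eq:main_FPR_even} should indeed read $-\lambda A_0$, matching \eqref{eq:third_family}).
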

\begin{proof} 
We begin by recalling the block-structure of $F_k(\lambda)$ when $k$ is even \cite[Section 8]{Her-GFPR}.
Notice that $F_4(\lambda)$ is partitioned to show that it is an extended block Kronecker pencil.
\[
F_4(\lambda) = \left[ \begin{array}{cccc} \lambda A_4+A_3 & A_2 & -I_n & 0 \\ A_2 & -\lambda A_2+A_1 & \lambda I_n & A_0 \\ -I_n & \lambda I_n & 0 & 0 \\ 0 & A_0 & 0 & -\lambda A_0 \end{array} \right].
\]
For $k \geq 6$ even, we have that $F_k(\lambda)$ is of the form
\begin{equation}\label{Feven}
	\begin{bmatrix}
		\lambda A_k + A_{k-1} & A_{k-2} & -I_n &&&&&&\\
		A_{k-2} & A_{k-3} - \lambda A_{k-2} & \lambda I_n & A_{k-4} & -I_n &&&&\\
		-I_n & \lambda I_n & 0 & 0 &&&&&\\
		& A_{k-4} & 0 & \ddots & \ddots &&&&&\\
		& -I_n && \ddots & A_3 - \lambda A_4 & \lambda I_n & A_2 & -I_n &&\\
		&&&& \lambda I_n & 0 & 0 &&\\
		&&&& A_2 & 0 & A_1 - \lambda A_2 & \lambda I_n & A_0\\
		&&&& -I_n && \lambda I_n & 0 & 0\\
		&&&&&& A_0 & 0 & -\lambda A_0\\
	\end{bmatrix}.
\end{equation}
where the empty spaces denote zero blocks.

For $k=2$ or $k=4$, the theorem can be easily checked.
So let us assume $k\geq 6$.  
From the explicit block-structure of the pencil $F_k(\lambda)$ it is not difficult to check that  parts (c) and (d) hold and $( \Pi_c^n)^\mathcal{B} F_k(\lambda) \Pi_c^n$ is of the form \eqref{eq:main_FPR_even}, where
\[
B=\left[ \begin{array}{cccc} 0 & 0 & \cdots & 0 \\ -A_{k-2} & 0 & \cdots & 0 \\ 0 & -A_{k-4} & \cdots & 0\\ \vdots & \vdots & \ddots & \vdots\\ 0 & 0 & \cdots & -A_2 \end{array} \right].
\]

Parts (a) and (b) follow from checking directly that the wing block-columns of the form  $-e_i \otimes I_n + \lambda e_{i+1} \otimes I_n$ and the wing block-rows of $F_k(\lambda)$ relative to $(\mathbf{c}, \mathbf{c})$ are in positions $k-j\in\{3,5,\hdots,k-1\}$, which, in turn, by Lemmas \ref{wcSIP} and \ref{lemma:txSIP}, correspond to those values of $j\in \{0:k-2\}$ such that $(\mathbf{w}_{k-1},\mathbf{c}_{k-1}, j)$ satisfies the SIP.
Note that $\mathbf{csf}(\mathbf{w}_{k-1}, \mathbf{c}_{k-1})=(k-2:k-1, k-4:k-2, \ldots, 0:2, 0) $. 

Finally, from Theorem \ref{thm:third family}, it follows that the pencil \eqref{eq:main_FPR_even} is a strong linearization of $P(\lambda)$ when $A_0$ is nonsingular.
\end{proof}

As in the odd case, we next give the main result for some particular block-symmetric GFPR . 
We also prove  some structural information concerning  the block-rows and block-columns of these particular GFPR.
\begin{theorem}\label{thm:aux_GFPR3}
Let $P(\lambda)=\sum_{i=0}^k A_i\lambda^i\in\mathbb{F}[\lambda]^{n\times n}$ of even degree $k$, let $s=(k-2)/2$,  and let $L_P(k-1,\mathbf{t}_w,\emptyset,\mathcal{Z}_w,\emptyset)$ be the block-symmetric GFPR associated with $P(\lambda)$ given by
\[
M_{\mathbf{t}_w}(\mathcal{Z}_w)(\lambda M^P_{-k}-M^P_{\mathbf{w}_{k-1}})M^P_{\mathbf{c}_{k-1}}M_{\rev(\mathbf{t}_w)}(\rev (\mathcal{Z}_w)).
\]
Then, there exists a block-permutation $\mathbf{c}$ of $\{1:k\}$ such that
 \begin{equation}
(\Pi_{\mathbf{c}}^n)^\mathcal{B} L_P(k-1,\mathbf{t}_w,\emptyset,\mathcal{Z}_w,\emptyset) \Pi_{\mathbf{c}}^n 
\in\langle \mathcal{E}_1^P\rangle
\end{equation}
is as in \eqref{eq:third_family}.

Moreover, the following statements hold:
\begin{itemize}
\item[\rm (a)] The wing block-columns of  $(\Pi_{\mathbf{c}}^n)^{\mathcal{B}} L_P(k-1,\mathbf{t}_w,\emptyset,\mathcal{Z}_w,\emptyset)$ relative to $(\mathbf{\mathrm{id}},\mathbf{c})$ that are  of the form $-e_i \otimes I_n + \lambda e_{i+1} \otimes I_n$, for $1\leq i \leq s$, are  located in  positions $k-j$, where $j\in \{0:k-2\}$ and $(\mathbf{t}_w,\mathbf{w}_{k-1},\mathbf{c}_{k-1},\rev(\mathbf{t}_w), j)$ satisfies the SIP.

\item[\rm (b)] The wing block-rows of  $L_P(k-1,\mathbf{t}_w,\emptyset,\mathcal{Z}_w,\emptyset) \Pi_{\mathbf{c}}^n$ relative to $(\mathbf{c}, \mathbf{\mathrm{id}})$  that are of the form $-e_i^T \otimes I_n + \lambda e_{i+1}^T \otimes I_n$, for $1\leq i \leq s$, are  located in  positions $k-j$, where $j\in \{0:k-2\}$ and $(\mathbf{t}_w,\mathbf{w}_{k-1},\mathbf{c}_{k-1},\rev(\mathbf{t}_w), j)$ satisfies the SIP.
\item[\rm (c)]  The first block-row and the first block-column of   $L_P(k-1,\mathbf{t}_w,\emptyset,\mathcal{Z}_w,\emptyset)$ are, respectively, the first body block-row and the first body block-column of $L_P(k-1,\mathbf{t}_w,\emptyset,\mathcal{Z}_w,\emptyset)$ relative to $(\mathbf{c},\mathbf{c})$.
 Moreover, the block-entry of  $(\Pi_{\mathbf{c}}^n)^\mathcal{B} L_P(k-1,\mathbf{t}_w,\emptyset,\allowbreak\mathcal{Z}_w,\emptyset) \Pi_{\mathbf{c}}^n$ in position $(1,1)$ equals $\lambda A_k + A_{k-1}$.
\end{itemize}

Furthermore, if $\mathcal{Z}_w$ is a nonsingular matrix assignment for $\mathbf{t}_w$, then $C$, $C^{\mathcal{B}}$, $D$ and $D^{\mathcal{B}}$ in \eqref{eq:third_family} are nonsingular.
If, additionally, $A_0$ is nonsingular,  $(\Pi_{\mathbf{c}}^n)^\mathcal{B} L_P(k-1,\mathbf{t}_w,\emptyset,\mathcal{Z}_w,\emptyset) \Pi_{\mathbf{c}}^n$ is a strong linearization of $P(\lambda)$. 
\end{theorem}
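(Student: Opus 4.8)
The plan is to prove the statement by induction on the number of indices of $\mathbf{t}_w$, running in parallel with the proof of Theorem~\ref{thm:aux_GFPR1} but with the family $\langle\mathcal{E}_1^P\rangle$ playing the role of $\langle\mathcal{O}_1^P\rangle$. The base case $\mathbf{t}_w=\emptyset$ is exactly Theorem~\ref{aux;simple2}: it supplies the block-permutation $\mathbf c$, the $\langle\mathcal{E}_1^P\rangle$-normal form \eqref{eq:third_family} (with $C=0$ and $D=I_{sn}$), the structural claims (a)--(c), the location of the exceptional block-row and block-column given by part~(d) of that theorem, and the strong-linearization conclusion whenever $A_0$ is nonsingular. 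To make the induction close I would carry along, in addition to (a)--(c), an extra statement recording the precise position of the exceptional block-row and block-column after the relevant permutations (the even-degree counterpart of part~(d) of Theorem~\ref{aux;simple2}); this has no analogue in the proof of Theorem~\ref{thm:aux_GFPR1}.

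For the inductive step I would write $\mathbf{t}_w=(\mathbf{t}'_w,x)$, $\mathcal{Z}_w=(\mathcal{Z}'_w,Z_x)$, so that $\mathcal{L}(\lambda):=L_P(k-1,\mathbf{t}_w,\emptyset,\mathcal{Z}_w,\emptyset)=M_x(Z_x)\,\mathcal{L}'(\lambda)\,M_x(Z_x)$ with $\mathcal{L}'(\lambda):=L_P(k-1,\mathbf{t}'_w,\emptyset,\mathcal{Z}'_w,\emptyset)$, and invoke the inductive hypothesis to obtain $\Pi^n_{\boldsymbol\sigma}$ with $(\Pi^n_{\boldsymbol\sigma})^{\mathcal B}\mathcal{L}'(\lambda)\Pi^n_{\boldsymbol\sigma}\in\langle\mathcal{E}_1^P\rangle$. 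A useful simplification specific to the even case is that the last index $x$ is never $0$: since $h=k-1$ is odd, $\mathbf{c}_{k-1}$ ends in $0$, so if $x=0$ the trailing $0$ of $\mathbf{c}_{k-1}$ would be immediately followed by the leading $0$ of $\rev(\mathbf{t}_w)$, violating the SIP. Hence only the analogue of Case~II of the proof of Theorem~\ref{thm:aux_GFPR1} is needed, split into $x$ of Type~I and $x$ of Type~II relative to $(\mathbf{t}'_w,\mathbf{w}_{k-1},\mathbf{c}_{k-1},\rev(\mathbf{t}'_w))$. In each subcase I would use (a)--(b) of the inductive hypothesis with Lemma~\ref{lemma:txSIP} and Proposition~\ref{prop:SIPheads} to locate the $(k-x)$-th and $(k-x+1)$-th block-rows and block-columns of $\mathcal{L}'(\lambda)$ — the only ones pre- and post-multiplication by $M_x(Z_x)$ touches — partition $\mathcal{L}'(\lambda)$ as in the proof of Theorem~\ref{thm:aux_GFPR1}, carry out the now-explicit block products, and in the Type~I case follow with the block-permutation $\Pi^n_{\mathbf d}$, $\mathbf d=(1:k-x-1,\,k-x+1,\,k-x,\,k-x+2:k)$, to recover the shape \eqref{eq:third_family}. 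One checks that the new $D$ is the old $D$ left-multiplied by an invertible matrix assembled from $Z_x$ and identity blocks, that $M(\lambda;P_{k-1})$ only acquires extra summands of the form $(\cdot)K_s(\lambda)+K_s(\lambda)^{T}(\cdot)^{\mathcal B}$, and that the exceptional data $\left[\begin{smallmatrix}0\\A_0\end{smallmatrix}\right]$, $\left[\begin{smallmatrix}0&A_0\end{smallmatrix}\right]$ and the corner $-\lambda A_0$ are either left alone or relocated as a block; claims (a)--(c) and the exceptional-position statement for $\mathcal{L}(\lambda)$ then follow from Remark~\ref{rem: Type I and II heads} and Lemma~\ref{lemma:txSIP} exactly as in the odd case, and part~(c) again uses $x\leq k-2$ to keep the first block-row and block-column of $\mathcal{L}'(\lambda)$ fixed. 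Finally, if $\mathcal{Z}_w$ is nonsingular then so is $\mathcal{Z}'_w$, and since the elementary matrices $M_x(Z_x)$ with $x\in\{1:k-2\}$ are invertible for all $Z_x$, the matrices $C$, $C^{\mathcal B}$, $D$, $D^{\mathcal B}$ in \eqref{eq:third_family} stay nonsingular; together with $A_0$ nonsingular, Theorem~\ref{thm:third family} yields that $(\Pi^n_{\mathbf c})^{\mathcal B}\mathcal{L}(\lambda)\Pi^n_{\mathbf c}$ is a strong linearization of $P(\lambda)$.

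The hard part, and the genuine novelty over Theorem~\ref{thm:aux_GFPR1}, is the bookkeeping of the exceptional block-row and block-column. For $x=1$ the affected positions are $\{k-1,k\}$, and $k$ is exactly the position occupied by the exceptional block-row/column of the simple FPR, so the $M_1(Z_1)$-conjugation entangles a wing block-column with the exceptional one; one must verify that after the swap $\Pi^n_{\mathbf d}$ the $\left[\begin{smallmatrix}0&A_0\end{smallmatrix}\right]$/$-\lambda A_0$ block-pattern survives and is not contaminated by a $Z_1$-term, equivalently that the exceptional block-row/column is relocated to a predictable position. This is why the inductive statement must be strengthened as indicated above, and it is also why the Type~I subcase has to distinguish whether the $(k-x+1)$-th block-row/column of $\mathcal{L}'(\lambda)$ is a body one, the exceptional one, or a wing one not of the special form $-e_i^{T}\otimes I_n+\lambda e_{i+1}^{T}\otimes I_n$; handling the ``exceptional'' possibility correctly is where essentially all of the additional work over the odd-degree argument lies.
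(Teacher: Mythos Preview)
Your proposal is correct and follows essentially the same approach as the paper: the paper's own proof simply states that the base case is Theorem~\ref{aux;simple2} and that the inductive step is ``almost identical to the one used for proving Theorem~\ref{thm:aux_GFPR1}, so we omit it.'' Your two even-case-specific observations --- that $x\neq 0$ is forced (since $\mathbf{c}_{k-1}$ ends in $0$ when $k-1$ is odd) and that the exceptional block-row/column must be tracked through the induction --- are correct refinements that the paper glosses over, and your handling of the $x=1$ entanglement with the exceptional position is exactly the extra bookkeeping needed to make the paper's ``almost identical'' claim rigorous.
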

\begin{proof}
When the tuple $\mathbf{t}_w$ is empty, the results follow from Theorem \ref{aux;simple2}.
When the tuple $\mathbf{t}_w$ is not empty, the results follow by an induction argument on the number of indices in $\mathbf{t}_w$ almost identical to the one used for proving Theorem \ref{thm:aux_GFPR1}, so we omit it. 
\end{proof}
\begin{remark}\label{remark:permutation3}
We note that part (c) in Theorem \ref{thm:aux_GFPR3} implies that the block-permutation $\Pi_{\mathbf{c}}^n$ is of the form $I_n\oplus \Pi_{\mathbf{\widetilde{c}}}^n$, for some permutation  $\mathbf{\widetilde{c}}$ of the set $\{1:k-1\}$.
\end{remark}

As a consequence of Theorem \ref{thm:aux_GFPR3}, we obtain  Theorem \ref{thm:aux_GFPR4}, which is a structural result for another subclass of block-symmetric GFPR.

\begin{theorem}\label{thm:aux_GFPR4}
Let $P(\lambda)=\sum_{i=0}^k A_i\lambda^i\in\mathbb{F}[\lambda]^{n\times n}$ of even degree $k$, let $s=(k-2)/2$, and let  
\[
L_P(0,\emptyset,\mathbf{t}_{v},\emptyset,\mathcal{Z}_{v})= M_{\mathbf{t}_{v}}(\mathcal{Z}_{v})(\lambda M^P_{\mathbf{v_0}}-M_{0}^P)M^P_{-k+\mathbf{c}_{k-1}}M_{\rev(\mathbf{t}_v)}(\rev(\mathcal{Z}_{v})),
\]
be a block-symmetric GFPR associated with $P(\lambda)$.
 Then, there exists a block-permutation matrix $\Pi_\mathbf{c}^n$ such that  $(\Pi_\mathbf{c}^n)^\mathcal{B} L_P(0,\emptyset,\mathbf{t}_{v},\emptyset,\mathcal{Z}_{v})\Pi_\mathbf{c}^n$ is the pencil
\begin{equation}\label{eq:aux_GFPR4} 
\left[\begin{array}{c|c|c}
0 & 0 & DK_s(\lambda) \\ \hline
0 & \phantom{\Big{(}} -A_k \phantom{\Big{)}} & \begin{bmatrix} \lambda A_k & 0 \end{bmatrix}+CK_s(\lambda) \\ \hline
K_s(\lambda)^TD^{\mathcal{B}} & \begin{bmatrix} \lambda A_k \\ 0 \end{bmatrix} + K_s(\lambda)^TC^{\mathcal{B}} & M(\lambda;P^{k-1})+BK_s(\lambda)+K_s(\lambda)^TB^{\mathcal{B}}
\end{array}\right],
\end{equation}
where $P^{k-1}(\lambda)$ is defined in \eqref{eq:Pd},  for some matrices $B$, $C$, and $D$.
Moreover, the following statements hold:
\begin{itemize}
\item[\rm (a)] The last block-column of \eqref{eq:aux_GFPR4} is the last block-row of $(\Pi_\mathbf{c}^n)^\mathcal{B}L_P(0,\emptyset,\mathbf{t}_{v},\emptyset,\mathcal{Z}_{v})$.
\item[\rm (b)] The last block-row of \eqref{eq:aux_GFPR4} is the last block-column of $L_P(0,\emptyset,\mathbf{t}_{v},\emptyset,\mathcal{Z}_{v})\Pi_\mathbf{c}^n$.
\item[\rm (c)] The block-entry of $(\Pi_\mathbf{c}^n)^\mathcal{B}L_P(0,\emptyset,\mathbf{t}_{v},\emptyset,\mathcal{Z}_{v})\Pi_\mathbf{c}^n$ in position $(k,k)$ equals $\lambda A_1+A_0$.
\end{itemize}
Furthermore, if $\mathcal{Z}_v$ is a nonsingular matrix assignment for $\mathbf{t}_v$, then $C$, $C^{\mathcal{B}}$, $D$ and $D^{\mathcal{B}}$ are nonsingular.
If, additionally, $A_k$ is nonsingular, $(\Pi_\mathbf{c}^n)^\mathcal{B}L_P(0,\emptyset,\mathbf{t}_{v},\emptyset,\mathcal{Z}_{v})\Pi_\mathbf{c}^n$ is a strong linearization of $P(\lambda)$.
\end{theorem}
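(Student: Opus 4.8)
The plan is to reduce Theorem~\ref{thm:aux_GFPR4} to Theorem~\ref{thm:aux_GFPR3} by the same reversal argument used in the proof of Theorem~\ref{thm:aux_GFPR2}. Let $\widehat{P}(\lambda):=-\rev P(\lambda)$, a matrix polynomial of grade $k$ whose coefficient of $\lambda^{i}$ is $-A_{k-i}$; in particular its trailing coefficient is $-A_k$ and its leading coefficient is $-A_0$. Consider the pencil $\widehat{L}(\lambda):=R_k\,\rev\!\bigl(-L_P(0,\emptyset,\mathbf{t}_v,\emptyset,\mathcal{Z}_v)(\lambda)\bigr)\,R_k$. Since the elementary factors $M_{\mathbf{t}_v}(\mathcal{Z}_v)$, $M^{P}_{-k+\mathbf{c}_{k-1}}$ and $M_{\rev(\mathbf{t}_v)}(\rev(\mathcal{Z}_v))$ are constant, $\rev$ acts only on the middle grade-$1$ factor, and inserting $R_k R_k=I$ between the factors and using the identity \eqref{eq:aux elementary matrix}, $R_k^{-1}=R_k$, and the fact that all indices in $\mathbf{t}_v$, $\mathbf{v}_0$ and $-k+\mathbf{c}_{k-1}$ lie in $\{-k:-1\}$, I would obtain
\[
\widehat{L}(\lambda)=M_{\mathbf{t}_v+k}(\mathcal{Z}_v)\bigl(\lambda M^{\widehat{P}}_{-k}-M^{\widehat{P}}_{\mathbf{w}_{k-1}}\bigr)M^{\widehat{P}}_{\mathbf{c}_{k-1}}M_{\rev(\mathbf{t}_v)+k}(\rev(\mathcal{Z}_v)),
\]
using $\mathbf{v}_0+k=\mathbf{w}_{k-1}$; that is, $\widehat{L}(\lambda)=L_{\widehat{P}}(k-1,\mathbf{t}_v+k,\emptyset,\mathcal{Z}_v,\emptyset)$, a block-symmetric GFPR of the type handled by Theorem~\ref{thm:aux_GFPR3}. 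Its SIP hypothesis is inherited because $(\mathbf{t}_v,\mathbf{v}_0,-k+\mathbf{c}_{k-1},\rev(\mathbf{t}_v))$ satisfies the SIP if and only if its shift by $k$, namely $(\mathbf{t}_v+k,\mathbf{w}_{k-1},\mathbf{c}_{k-1},\rev(\mathbf{t}_v)+k)$, does.

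Next I would apply Theorem~\ref{thm:aux_GFPR3} to $\widehat{L}(\lambda)$: there is a permutation $\mathbf{c}'$ of $\{1:k\}$ with $(\Pi^n_{\mathbf{c}'})^{\mathcal B}\widehat{L}(\lambda)\Pi^n_{\mathbf{c}'}\in\langle\mathcal{E}_1^{\widehat{P}}\rangle$ as in \eqref{eq:third_family}, with body blocks $B'$, $C'$, $D'$, and by Remark~\ref{remark:permutation3} one has $\Pi^n_{\mathbf{c}'}=I_n\oplus\Pi^n_{\widetilde{\mathbf{c}'}}$. Set $\Pi^n_{\mathbf{c}}:=R_k\Pi^n_{\mathbf{c}'}R_k$. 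Since $L_P(0,\emptyset,\mathbf{t}_v,\emptyset,\mathcal{Z}_v)(\lambda)=-\rev\!\bigl(R_k\widehat{L}(\lambda)R_k\bigr)$, it follows that
\[
(\Pi^n_{\mathbf{c}})^{\mathcal B}L_P(0,\emptyset,\mathbf{t}_v,\emptyset,\mathcal{Z}_v)(\lambda)\Pi^n_{\mathbf{c}}=-\rev\!\Bigl(R_k\,(\Pi^n_{\mathbf{c}'})^{\mathcal B}\widehat{L}(\lambda)\Pi^n_{\mathbf{c}'}\,R_k\Bigr),
\]
and it only remains to push $-\rev$ and the block-$R_k$ conjugation through the $\langle\mathcal{E}_1^{\widehat{P}}\rangle$ block form, exactly as in the final displays of the proof of Theorem~\ref{thm:aux_GFPR2}. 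The conjugation by $R_k$ sends the $s+1$ body block-rows/columns to the last $s+1$ positions, the exceptional one to position $s+1$, and the $s$ wing ones to the first $s$ positions; applying $-\rev$ block-entrywise then turns the $(s+1)\times(s+1)$ body into $M(\lambda;P^{k-1})+BK_s(\lambda)+K_s(\lambda)^TB^{\mathcal B}$ (using that $-\rev$ of the $(k-1)$-th Horner shift $\widehat{P}_{k-1}$ of $\widehat{P}$ is the truncation $P^{k-1}$ of $P$ from \eqref{eq:Pd}), the exceptional diagonal entry $-\lambda\widehat{A}_0=\lambda A_k$ into $-A_k$, the body part of the exceptional column $\begin{bmatrix}0\\ \widehat{A}_0\end{bmatrix}$ into $\begin{bmatrix}\lambda A_k\\ 0\end{bmatrix}+K_s(\lambda)^TC^{\mathcal B}$ (and its block-transpose for the row), and leaves the zero blocks zero, thereby yielding precisely \eqref{eq:aux_GFPR4} with $B:=R_{s+1}B'R_{s+1}$, $C:=R_{s+1}C'R_s$ and $D:=R_sD'R_s$ (up to the obvious orientation adjustments of the $K_s$-blocks).

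For parts (a)--(c) I would transport part (c) of Theorem~\ref{thm:aux_GFPR3}, which identifies the first body block-row/column of $\widehat{L}(\lambda)$ and the value $\lambda\widehat{A}_k+\widehat{A}_{k-1}$ of its $(1,1)$ block-entry, through the operation $X\mapsto-\rev(R_kXR_k)$: this maps the first block-row/column to the last one and the $(1,1)$ block-entry to the $(k,k)$ block-entry, which becomes $\lambda A_1+A_0$; the argument is identical to the paragraph proving (a)--(c) in Theorem~\ref{thm:aux_GFPR2}. Finally, if $\mathcal{Z}_v$ is a nonsingular matrix assignment for $\mathbf{t}_v$, then $\rev(\mathcal{Z}_v)$ is a nonsingular matrix assignment for $\rev(\mathbf{t}_v)$, so Theorem~\ref{thm:aux_GFPR3} makes $C',(C')^{\mathcal B},D',(D')^{\mathcal B}$ nonsingular, whence $C,C^{\mathcal B},D,D^{\mathcal B}$ are nonsingular since $R_s$ and $R_{s+1}$ are; and if in addition $A_k$ is nonsingular, then the trailing coefficient $-A_k$ of $\widehat{P}$ is nonsingular, so $(\Pi^n_{\mathbf{c}'})^{\mathcal B}\widehat{L}(\lambda)\Pi^n_{\mathbf{c}'}$ is a strong linearization of $\widehat{P}$ by Theorem~\ref{thm:aux_GFPR3}, and therefore $L_P(0,\emptyset,\mathbf{t}_v,\emptyset,\mathcal{Z}_v)$ is a strong linearization of $P$ because $\widehat{P}=-\rev P$ and both reversal and block-permutational congruence preserve the complete eigenstructure.

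The step I expect to be the main obstacle is the first one: verifying carefully that $R_k\,\rev(-\,\cdot\,)\,R_k$ really maps $L_P(0,\emptyset,\mathbf{t}_v,\emptyset,\mathcal{Z}_v)$ onto $L_{\widehat{P}}(k-1,\mathbf{t}_v+k,\emptyset,\mathcal{Z}_v,\emptyset)$ --- in particular tracking the behavior of the admissible tuple $\mathbf{v}_0$ and its symmetric complement $\mathbf{c}_{k-1}$ under the index shift by $k$, reconciling the signs built into the abbreviations $M^P_i=M_i(-A_i)$ and $M^P_{-i}=M_{-i}(A_i)$ with $\widehat{A}_i=-A_{k-i}$, and confirming that the SIP is preserved. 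A secondary, purely mechanical difficulty is the bookkeeping of the various $R_s$, $R_{s+1}$, $R_k$ matrices (of different sizes) when commuting $-\rev$ past the block form \eqref{eq:third_family}, so as to land on exactly \eqref{eq:aux_GFPR4} rather than merely on a pencil permutationally block-congruent to it.
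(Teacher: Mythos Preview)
Your proposal is correct and follows essentially the same approach as the paper: the paper's proof is only a brief outline stating that one should transform $L_P(0,\emptyset,\mathbf{t}_v,\emptyset,\mathcal{Z}_v)$ via the reversal operation and the sip matrix $R_k$ (using \eqref{eq:aux elementary matrix}) into a GFPR of the type in Theorem~\ref{thm:aux_GFPR3}, apply that theorem, and then reverse the operations---exactly as in the proof of Theorem~\ref{thm:aux_GFPR2}. You have faithfully expanded that outline, including the identification $\widehat{L}(\lambda)=L_{\widehat P}(k-1,\mathbf{t}_v+k,\emptyset,\mathcal{Z}_v,\emptyset)$ and the choice $\Pi^n_{\mathbf c}=R_k\Pi^n_{\mathbf c'}R_k$, and your anticipated obstacles are precisely the bookkeeping details the paper leaves implicit.
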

\begin{proof}
The proof follows almost identically that of Theorem \ref{thm:aux_GFPR2}, so it is only outlined.
By using the reversal operation and the sip matrix $R_k$, together with \eqref{eq:aux elementary matrix}, as it was done in the proof of Theorem \ref{thm:aux_GFPR2},  the pencil $L_P(0,\emptyset,\mathbf{t}_{v},\emptyset,\mathcal{Z}_{v})$ can be transformed into one of the block-symmetric GFPR considered in Theorem \ref{thm:aux_GFPR3}.
Then, applying the results in  Theorem \ref{thm:aux_GFPR3} to this new pencil and reversing the  operations performed on $L_P(0,\emptyset,\mathbf{t}_{v},\emptyset,\mathcal{Z}_{v})$,  the desired results can be obtained.
\end{proof}

\begin{remark}\label{remark:permutation4}
We note that  Theorem \ref{thm:aux_GFPR4} implies that  $\Pi_{\mathbf{c}}^n$ is of the form $\Pi_{\mathbf{\widetilde{c}}}^n\oplus I_n$, for some permutation $\mathbf{\widetilde{c}}$ of the set $\{1:k-1\}$.
\end{remark}

\subsubsection{Proof of Theorem \ref{thm:main_GFPR}}

The main tools for proving Theorem \ref{thm:main_GFPR} are Theorems \ref{thm:aux_GFPR1}, \ref{thm:aux_GFPR2}, \ref{thm:aux_GFPR3} and \ref{thm:aux_GFPR4}, together with Lemma \ref{lem:splitting}.

\begin{lemma}\label{lem:splitting}
Let $P(\lambda)=\sum_{i=0}^k A_i\lambda^i\in\mathbb{F}[\lambda]^{n\times n}$ be a matrix polynomial of degree $k$, and let  $h\in \{0:k-1\}$.
Let $L_P(h,\mathbf{t}_w,\mathbf{t}_v,\mathcal{Z}_w,\mathcal{Z}_v)$ be a block-symmetric GFPR.
Then, this pencil can be partitioned as
\begin{equation}\label{partition}
L_P(h,\mathbf{t}_w,\mathbf{t}_v,\mathcal{Z}_w,\mathcal{Z}_v) = \left[ \begin{array}{c|c|c} D_\mathbf{v}(\lambda) & y_\mathbf{v}(\lambda) & 0 \\ \hline x_\mathbf{v}(\lambda) & \lambda A_{h+1}+A_h & x_\mathbf{w}(\lambda) \\ \hline 0 & y_\mathbf{w}(\lambda) & D_\mathbf{w}(\lambda)   \end{array} \right ],
\end{equation}
where $ D_\mathbf{w}(\lambda) \in \mathbb{F}[\lambda]^{nh\times nh}$, $D_\mathbf{v}(\lambda)\in \mathbb{F}[\lambda]^{n(k-h-1) \times n(k-h-1)}$, $x_\mathbf{v}(\lambda)\in \mathbb{F}[\lambda]^{n \times n(k-h-1)}$, $x_\mathbf{w}(\lambda) \in \mathbb{F}[\lambda]^{n \times nh}$, $y_\mathbf{v}(\lambda)\in \mathbb{F}[\lambda]^{n(k-h-1)\times n}$ and $y_\mathbf{w}(\lambda) \in \mathbb{F}[\lambda]^{nh \times n}$,
and where the pencils
\[
F(\lambda) := \left[ \begin{array}{cc} \lambda A_{h+1}+A_h & x_\mathbf{w}(\lambda) \\ y_\mathbf{w}(\lambda) & D_\mathbf{w}(\lambda)\end{array} \right] \quad \mbox{and} \quad G(\lambda) := \left[ \begin{array}{cc} D_\mathbf{v}(\lambda) & y_\mathbf{v}(\lambda) \\ x_\mathbf{v}(\lambda) & \lambda A_{h+1}+A_h \end{array} \right]
 \]
are block-symmetric GFPR associated with $Q(\lambda):= P^{h+1}(\lambda)= \lambda^{h+1} A_{h+1} + \lambda^h A_h + \cdots + \lambda A_1 + A_0$ and $Z(\lambda):=P_{k-h}(\lambda)= \lambda^{k-h} A_{k} + \lambda^{k-h-1}  A_{k-1} + \cdots +  \lambda A_{h+1}+ A_h$, respectively.
More precisely, we have
\[
F(\lambda) = M_{\mathbf{t}_w}(\mathcal{Z}_w) ( \lambda M^Q_{-h-1} - M^Q_{\mathbf{w}_h})M^Q_{\mathbf{c}_h} M_{\rev(\mathbf{t}_w)}(\rev(\mathcal{Z}_w))
\]
and
\[
G(\lambda) = M_{\mathbf{t}_v}(\mathcal{Z}_v) ( \lambda M^Z_{\mathbf{v}_h} - M^Z_{0}) M^Z_{\mathbf{c}_{-k+h}} M_{\rev(\mathbf{t}_v)}(\rev(\mathcal{Z}_v)).
\]
\end{lemma}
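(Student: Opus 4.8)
The plan is to unwind the implicit definition~\eqref{symGFPR} of the block-symmetric GFPR and track which block-rows and block-columns of $L_P(h,\mathbf{t}_w,\mathbf{t}_v,\mathcal{Z}_w,\mathcal{Z}_v)$ are touched by the two ``halves'' of the product. The key observation is that the index tuples involving $\mathbf{w}_h$, $\mathbf{c}_h$, $\mathbf{t}_w$ have indices drawn from $\{0:h\}$ (equivalently, the corresponding elementary matrices act only on block-positions $k-h$ through $k$), while the tuples involving $\mathbf{v}_h$, $-k+\mathbf{c}_{k-h-1}$, $\mathbf{t}_v$ have indices drawn from $\{-k:-(h+1)\}$ (and the corresponding elementary matrices act only on block-positions $1$ through $k-h$). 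The only block-position shared by both ranges is position $k-h$. Thus, first I would reorder the factors in~\eqref{symGFPR}, using the commutativity relation~\eqref{commut} (Remark~\ref{commutativity}), so that all the $M_{\pm i}$ with negative indices are collected to the outside (or inside) of all the $M_i$ with nonnegative indices; this is legitimate because $\bigl||i|-|j|\bigr|\ge 1$ fails only for pairs within one of the two ranges, never across them, except precisely at the common boundary, which I would handle separately.

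Next, I would exploit the fact that $\lambda M^P_{\mathbf{v}_h}-M^P_{\mathbf{w}_h}$ is the ``core'' of the GFPR, and that $M^P_{\mathbf{v}_h}$ and $M^P_{\mathbf{w}_h}$ decompose as commuting products of a piece supported on block-rows/columns $k-h:k$ (coming from the nonnegative indices) times a piece supported on $1:k-h$ (coming from the negative indices), these two pieces sharing only block-position $k-h$. Writing $P^{h+1}(\lambda)=Q(\lambda)$ and $P_{k-h}(\lambda)=Z(\lambda)$ for the polynomials in the statement, the nonnegative-index piece together with $M^P_{\mathbf{c}_h}$ and the outer factors $M_{\mathbf{t}_w}(\mathcal{Z}_w),M_{\rev(\mathbf{t}_w)}(\rev(\mathcal{Z}_w))$ assembles exactly into the GFPR $F(\lambda)=M_{\mathbf{t}_w}(\mathcal{Z}_w)(\lambda M^Q_{-h-1}-M^Q_{\mathbf{w}_h})M^Q_{\mathbf{c}_h}M_{\rev(\mathbf{t}_w)}(\rev(\mathcal{Z}_w))$, after identifying $M^P_0,\dots,M^P_h$ with $M^Q_0,\dots,M^Q_h$ (valid since $A_0,\dots,A_h$ are the bottom coefficients of $Q$) and the ``missing'' top index $-h-1$ with the correct elementary matrix built from $A_{h+1}$; symmetrically, the negative-index piece assembles into $G(\lambda)$ after shifting all indices by $k$ and relabeling via $A_{h+1},\dots,A_k\mapsto$ the coefficients of $Z(\lambda)$. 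Here I would invoke Lemma~\ref{wcSIP} and Remark~\ref{remark:SIP} to confirm that $(\mathbf{t}_w,\mathbf{w}_h,\mathbf{c}_h,\rev(\mathbf{t}_w))$ and the $v$-analogue still satisfy the SIP, so that $F(\lambda)$ and $G(\lambda)$ are genuine block-symmetric GFPR of $Q(\lambda)$ and $Z(\lambda)$.

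Having established that all the action of the ``$w$-part'' is confined to block-positions $k-h:k$ and that of the ``$v$-part'' to $1:k-h$, the block-structure~\eqref{partition} follows: the $(1,3)$ and $(3,1)$ blocks (sizes $n(k-h-1)\times nh$ and $nh\times n(k-h-1)$) are zero because no elementary matrix in the product connects block-row $\le k-h-1$ with block-column $\ge k-h+1$; the central block-entry in position $(k-h,k-h)$ is $\lambda A_{h+1}+A_h$ because that is the overlap contribution of the core $\lambda M^P_{\mathbf{v}_h}-M^P_{\mathbf{w}_h}$ at the boundary position (all the $\mathbf{c}$-tuples and $\mathbf{t}$-tuples contribute only off-diagonal or identity entries there); and the remaining blocks $D_{\mathbf{v}},D_{\mathbf{w}},x_{\mathbf{v}},x_{\mathbf{w}},y_{\mathbf{v}},y_{\mathbf{w}}$ are exactly the entries of $G(\lambda)$ and $F(\lambda)$ respectively, by the assembly of the two parts described above. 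Block-symmetry of $F$ and $G$ is inherited from block-symmetry of $L_P$ together with the fact that $F$ and $G$ are principal block-submatrices of $L_P$ along a contiguous set of indices sharing the boundary block.

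The main obstacle I anticipate is the careful bookkeeping at the boundary block-position $k-h$: one must verify that pushing all negative-index elementary matrices past all nonnegative-index ones is valid even though both families include matrices acting at position $k-h$ (these are $M_0$-type on the $w$-side after relabeling and $M_{-k}$-type on the $v$-side), and that their combined effect on block-entry $(k-h,k-h)$ produces precisely $\lambda A_{h+1}+A_h$ rather than some spurious cross term. A secondary but lengthy technical point is checking the index-relabeling bijections $\{0:h\}\to$ coefficients of $Q$ and $\{-k:-(h+1)\}\to$ coefficients of $Z$ are consistent with the definitions of $M_i^P$, $M_{-i}^P$, $\mathbf{w}_h$, $\mathbf{v}_h$, $\mathbf{c}_h$, $\mathbf{c}_{-k+h}$; this is routine given the abbreviated notation $M_i^P=M_i(-A_i)$, $M_{-i}^P=M_{-i}(A_i)$, but must be done with care since an off-by-one in the grade of $Q$ or $Z$ would break everything. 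Since the argument closely parallels the known decomposition of (non-symmetric) GFPR in~\cite{canonical_Fiedler}, I would state it and refer to the Appendix for the full verification.
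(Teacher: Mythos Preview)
Your approach is correct and is essentially the same idea that underlies the paper's proof: separate the elementary-matrix factors according to whether their indices lie in $\{0:h\}$ or in $\{-k:-(h+1)\}$, observe that the only block-position touched by both ranges is $k-h$, and then identify the two resulting principal submatrices with the GFPR for $Q$ and $Z$. The paper simply compresses this into one sentence by citing \cite[Lemma 4.34]{canonical_Fiedler} for the partition and invoking the already-proven Theorems \ref{thm:aux_GFPR1}--\ref{thm:aux_GFPR4} (parts (c)) to pin down the $(k-h,k-h)$ block as $\lambda A_{h+1}+A_h$, whereas you verify that entry directly from the core.

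One small sharpening: your commutativity step is cleaner than you suggest. The outer tuples $\mathbf{t}_w,\mathbf{c}_h,\rev(\mathbf{t}_w)$ have indices in $\{0:h-1\}$ and $\mathbf{t}_v,-k+\mathbf{c}_{k-h-1},\rev(\mathbf{t}_v)$ have indices with absolute value in $\{h+2:k\}$, so $\bigl||i|-|j|\bigr|\ge 3$ across the two groups and commutativity is automatic with no boundary exception. The only place the indices $h$ and $-(h+1)$ appear is inside the core $\lambda M^P_{\mathbf{v}_h}-M^P_{\mathbf{w}_h}$ itself, and there no commuting is needed: $M^P_{\mathbf{v}_h}$ is block-diagonal of the form $\mathrm{diag}(\ast,I_{nh})$ and $M^P_{\mathbf{w}_h}$ is $\mathrm{diag}(I_{n(k-h-1)},\ast)$, so their linear combination already has the $3\times 3$ block pattern with zero corners, and the $(k-h,k-h)$ entry is read off immediately as $\lambda A_{h+1}+A_h$. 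This removes the ``main obstacle'' you flagged; the rest of the bookkeeping (relabeling the elementary matrices for $Q$ and $Z$) is exactly as you describe.
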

\begin{proof}
The result follows by combining the partition in \cite[Lemma 4.34]{canonical_Fiedler} with Theorems \ref{thm:aux_GFPR2} and \ref{thm:aux_GFPR4}, which imply that the upper-left block-entry and the bottom-right block-entry of, respectively, $F(\lambda)$ and $G(\lambda)$ are both equal to $\lambda A_{h+1}+A_h$.
\end{proof}

We are finally in a position to prove Theorem \ref{thm:main_GFPR}.

\begin{proof}{\rm (\bf of Theorem \ref{thm:main_GFPR})}
We have to distinguish four cases, namely, (i) $k$ odd and $h$ even; (ii) $k$ and $h$ odd;  (iii) $k$ even and $h$ odd; and (iv) $k$ and $h$ even.
The main ideas and steps for proving Theorem \ref{thm:main_GFPR} in each of these four cases are the same.
For this reason, we only prove the most difficult case, which turns out to be case (ii).
The  proofs for the remaining cases are just outlined at the end, leaving the details to the interested reader.

Let us assume that $k$ and $h$ are odd.
For simplicity, instead of writing $L_P(h,\mathbf{t}_w,\mathbf{t}_v,\mathcal{Z}_w,\mathcal{Z}_v)$, we write $L_p(\lambda)$.
The first goal, then, is to show that there exists a block-permutation matrix $\Pi_{\mathbf{c}}^n$ such that 
\begin{equation}\label{eq:goal}
(\Pi_{\mathbf{c}}^n)^{\mathcal{B}} L_P(\lambda)\Pi_{\mathbf{c}}^n\in
\langle \mathcal{O}_2^P \rangle.
\end{equation}

By Lemma \ref{lem:splitting} applied  to the pencil $L_P(\lambda)$,
we can partition $L_p(\lambda)$ as in \eqref{partition},
where the pencil
\[
F(\lambda) := \left[ \begin{array}{cc} \lambda A_{h+1}+A_h & x_\mathbf{w}(\lambda) \\ y_\mathbf{w}(\lambda) & D_\mathbf{w}(\lambda)\end{array} \right]=
M_{\mathbf{t}_w}(\mathcal{Z}_w) ( \lambda M^Q_{-h-1} - M^Q_{\mathbf{w}_h})M_{\mathbf{c}_w} M_{\rev(\mathbf{t}_w)}(\rev(\mathcal{Z}_w))
\]
is a block-symmetric GFPR associated with the matrix polynomial $Q(\lambda):= \lambda^{h+1} A_{h+1} + \lambda^h A_h + \cdots + \lambda A_1 + A_0$, and where the pencil
\[
G(\lambda) := \left[ \begin{array}{cc} D_\mathbf{v}(\lambda) & y_\mathbf{v}(\lambda) \\ x_\mathbf{v}(\lambda) & \lambda A_{h+1}+A_h \end{array} \right]=
M_{\mathbf{t}_v}(\mathcal{Z}_v) ( \lambda M^Z_{\mathbf{v}_h} - M^Z_{0}) M_{\mathbf{c}_v} M_{\rev(\mathbf{t}_v)}(\rev(\mathcal{Z}_v))
\]
is a block-symmetric GFPR associated with the matrix polynomial $Z(\lambda):= \lambda^{k-h} A_{k} + \lambda^{k-h-1}  A_{k-1} + \cdots +  \lambda A_{h+1}+ A_h$.
Notice that $F(\lambda)$ is one of the block-symmetric GFPR considered in Theorem \ref{thm:aux_GFPR3}, while  $G(\lambda)$ is one of the block-symmetric GFPR considered in Theorem \ref{thm:aux_GFPR4}. 

Let $s_1=(h-1)/2$ and $s_2=(k-h-2)/2$.
From Theorem \ref{thm:aux_GFPR3}, together with Remark  \ref{remark:permutation3} and Definition \ref{def:third_family}, we obtain that there exists a block-permutation matrix $I_n\oplus \Pi_{\mathbf{c}_1}^n$ such that $(I_n\oplus \Pi_{\mathbf{c}_1}^n)^\mathcal{B}F(\lambda) (I_n\oplus \Pi_{\mathbf{c}_1}^n)=$
\[
\left[\begin{array}{c|c:c}
   M(\lambda;Q_h)+BK_{s_1}(\lambda)+K_{s_1}(\lambda)B^\mathcal{B} & \begin{bmatrix} 0 \\ A_0 \end{bmatrix} + K_{s_1}(\lambda)^T C^\mathcal{B} & K_{s_1}(\lambda)^T D^\mathcal{B} \\ \hdashline
   \begin{bmatrix} 0 & A_0 \end{bmatrix} + CK_{s_1}(\lambda) & -\phantom{\Big{(}} \lambda A_0 \phantom{\Big{)}} & 0\\ \hline
  DK_{s_1}(\lambda)& 0 &  0
   \end{array}\right],
\]
for some matrices $B$, $C$ and $D$.
Additionally, from Theorem \ref{thm:aux_GFPR4}, together with Remark \ref{remark:permutation4}, we obtain that there exists a block-permutation matrix $\Pi_{\mathbf{c}_2}^n\oplus I_n$ such that $(\Pi_{\mathbf{c}_2}^n\oplus I_n)^\mathcal{B}G(\lambda) (\Pi_{\mathbf{c}_2}^n\oplus I_n)=$
\[
\left[\begin{array}{c|c|c}
0 & 0 & D^\prime K_{s_2}(\lambda) \\ \hline
0 & \phantom{\Big{(}} -A_k  \phantom{\Big{)}} & \begin{bmatrix} \lambda A_k & 0 \end{bmatrix}+C^\prime K_{s_2}(\lambda) \\ \hline
K_{s_2}(\lambda)^T(D^\prime)^{\mathcal{B}} & \begin{bmatrix} \lambda A_k \\ 0 \end{bmatrix} + K_{s_2}(\lambda)^T(C^\prime)^{\mathcal{B}} & M(\lambda;Z^{k-h-1})+B^\prime K_{s_2}(\lambda)+K_{s_2}(\lambda)^T(B^\prime)^{\mathcal{B}}
\end{array}\right],
\]
for some matrices $B^\prime$, $C^\prime$, and $D^\prime$.

Next, let us introduce the notation 
\[
\left[\begin{array}{c:c}
T_s(\lambda) & t_s(\lambda)
\end{array}\right] := \left[\begin{array}{c|ccc:c}
-I_n & \lambda I_n & 0 & 0 \\
 & -I_n & \lambda I_n \\
 & & \ddots & \ddots \\
  & & & -I_n & \lambda I_n
\end{array}\right]=: \left[\begin{array}{c|c}
r_s(\lambda) & R_s(\lambda)
\end{array}\right],
\]
where $t_s(\lambda)$ and $r_s(\lambda)$ are of size $sn\times n$. We also introduce the following  notation 

\[
M(\lambda;Q_h)+BK_{s_1}(\lambda)+K_{s_1}(\lambda)B^\mathcal{B}=: 
\begin{bmatrix}
\lambda A_{h+1}+A_{h} & m_1^Q(\lambda) \\
 m_1^Q(\lambda)^{\mathcal{B}} & \widehat{M}^Q(\lambda)
\end{bmatrix}
\]
and 
\[
 M(\lambda;Z^{k-h-1})+B^\prime K_{s_2}(\lambda)+K_{s_2}(\lambda)^T(B^\prime)^{\mathcal{B}}=:
\begin{bmatrix}
\widehat{M}^Z(\lambda) & m_1^Z(\lambda) \\
m_1^Z(\lambda)^{\mathcal{B}} & \lambda A_{h+1}+A_{h}
\end{bmatrix}.
\]
Then, omitting the dependence on $\lambda$ for lack of space,  notice that the pencil $(\Pi_{\mathbf{c}_2}^n\oplus I_n\oplus \Pi_{\mathbf{c}_1})^\mathcal{B}L_P(\lambda) (\Pi_{\mathbf{c}_2}^n\oplus I_n\oplus \Pi_{\mathbf{c}_1})=$
{\small
\setlength{\arraycolsep}{2.5pt}
 \[
\begin{bmatrix}
0 & 0 & D^\prime T_{s_2} & D^\prime t_{s_2} & 0 & 0 & 0 \\
0 & -A_k & \begin{bmatrix} \lambda A_k & 0 \end{bmatrix} + C^\prime T_{s_2} & C^\prime t_{s_2} & 0 & 0 & 0\\
(D^\prime T_{s_2})^\mathcal{B} & \begin{bmatrix} \lambda A_k \\ 0 \end{bmatrix} + (C^\prime T_{s_2})^\mathcal{B} & \widehat{M}^Z & m_1^Z & 0 & 0 & 0 \\
(D^\prime t_{s_2})^\mathcal{B} & (C^\prime t_{s_2})^\mathcal{B} & (m_1^Z)^\mathcal{B} & \lambda A_{h+1}+A_{h} & m_1^Q & (Cr_{s_1})^\mathcal{B} & (Dr_{s_1})^\mathcal{B} \\
0 & 0 & 0 & (m_1^Q)^\mathcal{B} & \widehat{M}^Q & \begin{bmatrix} 0 \\ A_0 \end{bmatrix}+ (CR_{s_1})^\mathcal{B} & (DR_{s_1})^\mathcal{B} \\
0 & 0 & 0 & 0 & \begin{bmatrix} 0 & A_0 \end{bmatrix}+ Cr_{S_1} & -\lambda A_0 & 0 \\
0 & 0 & 0 & Dr_{s_1} & DR_{s_1} & 0 & 0
\end{bmatrix},
\]}%
is block-permutationally congruent to
{\small
\setlength{\arraycolsep}{2.5pt}
\[
\left[\begin{array}{c:ccc:c:cc}
 -A_k & \begin{bmatrix} \lambda A_k & 0 \end{bmatrix} + C^\prime T_{s_2} & C^\prime t_{s_2} & 0 & 0 & 0 & 0 \\ \hdashline
 \begin{bmatrix} \lambda A_k \\ 0 \end{bmatrix} + (C^\prime T_{s_2})^\mathcal{B} & \widehat{M}^Z & m_1^Z & 0 & 0 & (D^\prime T_{s_2})^\mathcal{B} & 0 \\
(C^\prime t_{s_2})^\mathcal{B} & (m_1^Z)^\mathcal{B} & \lambda A_{h+1}+A_{h} & m_1^Q & (Cr_{s_1})^\mathcal{B} & (D^\prime t_{s_2})^\mathcal{B} & (Dr_{s_1})^\mathcal{B} \\
0 & 0 & (m_1^Q)^\mathcal{B} & \widehat{M}^Q & \begin{bmatrix} 0 \\ A_0 \end{bmatrix}+ (CR_{s_1})^\mathcal{B} & 0 & (DR_{s_1})^\mathcal{B} \\ \hdashline
0 & 0 & Cr_{s_1} & \begin{bmatrix} 0 & A_0 \end{bmatrix} + CR_{s_1} & -\lambda A_0 & 0 & 0  \\ \hdashline
0 & D^\prime T_{s_2} & D^\prime t_{s_2} & 0 & 0 & 0 & 0\\
0 & 0 & Dr_{s_1} & DR_{s_1} & 0 & 0 & 0
 \end{array}\right].
\]}%

To finish the proof, it suffices to check that the above pencil belongs to $\langle \mathcal{O}_2^P \rangle$.
Hence, we have to analyze the different blocks  highlighted by the dash lines.
Let $s=(k-1)/2$. 
First, notice
\begin{equation}\label{eq:auxD}
\begin{bmatrix}
D^\prime T_{s_2}(\lambda) & D^\prime t_{s_2}(\lambda) & 0 \\
0 & Dr_{s_1}(\lambda) & DR_{s_1}(\lambda)
\end{bmatrix}=
\begin{bmatrix}
D^\prime & 0 \\ 0 & D
\end{bmatrix}
K_{s-1}(\lambda)=: \widetilde{D}K_{s-1}(\lambda).
\end{equation}
Second, notice
\begin{align*}
\begin{bmatrix}
0 & Cr_{s_1} & \begin{bmatrix} 0 & A_0 \end{bmatrix} + CR_{s_1}\end{bmatrix} =& 
\begin{bmatrix} 0 & 0 & \begin{bmatrix} 0 & A_0 \end{bmatrix}\end{bmatrix}+\begin{bmatrix} 0 & C \end{bmatrix}K_{s-1}(\lambda) =:\\ & \begin{bmatrix} 0 & A_0 \end{bmatrix} + \widetilde{C}K_{s-1}(\lambda).
\end{align*}
Third, notice
\begin{align*}
\begin{bmatrix}
\begin{bmatrix} \lambda A_k & 0 \end{bmatrix} + C^\prime T_{s_2}(\lambda) & C^\prime t_{s_2}(\lambda) & 0 
\end{bmatrix} = &
\begin{bmatrix}
\begin{bmatrix} \lambda A_k & 0 \end{bmatrix} & 0 & 0 
\end{bmatrix}+
\begin{bmatrix} C^\prime & 0 \end{bmatrix} K_{s-1}(\lambda) =:\\ & 
\begin{bmatrix} \lambda A_k & 0 \end{bmatrix} + \widetilde{C}^\prime K_{s-1}(\lambda).
\end{align*}
Finally, writing $B = \left[  \begin{smallmatrix} b \\ \widehat{B} \end{smallmatrix} \right]$  and $B^\prime = \left[  \begin{smallmatrix} \widehat{B}^\prime \\ b^\prime \end{smallmatrix} \right]$, where $b$ and $b^\prime$ are, respectively, the first and last block-rows of $B$ and $B^\prime$, it is not difficult to check that
\begin{align*}
\begin{bmatrix}
\widetilde{M}^Z(\lambda) & m_1^Z(\lambda) & 0 \\
m_1^Z(\lambda)^\mathcal{B} & \lambda A_{h+1}+A_h & m_1^Q(\lambda) \\
0 & m_1^Q(\lambda)^\mathcal{B} & \widehat{M}^Q(\lambda)
\end{bmatrix} =& M(\lambda;P_{k-1}^{k-1})+\begin{bmatrix}
\widehat{B}^\prime & 0 \\
b^\prime & b \\
0 & B
\end{bmatrix}K_{s-1}(\lambda)+ \\& \hspace{2.5cm}
K_{s-1}(\lambda)^T \begin{bmatrix}
(\widehat{B}^\prime)^\mathcal{B} & (b^\prime)^\mathcal{B} & 0 \\ 0 & b^\mathcal{B} & B^\mathcal{B} 
\end{bmatrix} =: \\ &
M(\lambda;P_{k-1}^{k-1})+\widetilde{B}K_{s-1}(\lambda)+K_{s-1}(\lambda)^T(\widetilde{B})^\mathcal{B}.
\end{align*}
Thus, we have proven that there exists a block-permutation matrix $\Pi_{\mathbf{c}}^n$ such that the permuted GFPR  $(\Pi_{\mathbf{c}}^n)^\mathcal{B}L_p(\lambda) \Pi_{\mathbf{c}}^n$ is of the form 
\[
  \left[
        \begin{array}{c:c:c|c}
  -A_k & \begin{bmatrix} \lambda A_k & 0 \end{bmatrix} + \widetilde{C}^\prime K_{s-1}&0& 0\\
            \hdashline
            \begin{bmatrix} \lambda A_k \\ 0 \end{bmatrix} + K_{s-1}^T(\widetilde{C}^\prime )^\mathcal{B} & M(\lambda;P_{k-1}^{k-1}) + \widetilde{B}K_{s-1}+K_{s-1}^T\widetilde{B}^\mathcal{B}& \begin{bmatrix} 0 \\ A_0 \end{bmatrix} +  K_{s-1}^T \widetilde{C}^\mathcal{B} & K_{s-1}^T \widetilde{D}^\mathcal{B} \\
            \hdashline
            0&\begin{bmatrix} 0 & A_0 \end{bmatrix} + \widetilde{C}K_{s-1} & \phantom{\Big{(}} -\lambda A_0 \phantom{\Big{)}} & 0 \\  \hline
            0 & \phantom{\Big{(}} \widetilde{D}K_{s-1} \phantom{\Big{)}} & 0 & 0
        \end{array}
    \right],
\]
which belongs to $\langle \mathcal{O}_2^P \rangle$. 
Furthermore, if statement (i)  holds, then the matrices $D$, $D^{\mathcal{B}}$, $D^\prime$  and $(D')^{\mathcal{B}}$ in \eqref{eq:auxD} are nonsingular by Theorems \ref{thm:aux_GFPR3} and \ref{thm:aux_GFPR3}.
Hence, in this situation, the matrices $\widetilde{D}$ and $\widetilde{D}^{\mathcal{B}}$  are nonsingular.
 Then, taking into consideration the statements (ii) and (iii), by Theorem \ref{thm:second family}, the pencil $(\Pi_{\mathbf{c}}^n)^\mathcal{B}L_p(\lambda) \Pi_{\mathbf{c}}^n$ is a strong linearization of $P(\lambda)$.
\smallskip

For proving Theorem \ref{thm:main_GFPR} in cases (a), (c) and (d), one may proceed as follow. 
First, one apply Lemma \ref{lem:splitting} to the block-symmetric GFPR in order to ``split'' the pencil into two simpler block-symmetric GFPR $F(\lambda)$ and $G(\lambda)$, as we have done at the beginning of the proof for case (b).  
Depending on the parity of $k$ and $h$, these simpler GFPR are as one of those considered in Theorems \ref{thm:aux_GFPR1}, \ref{thm:aux_GFPR2}, \ref{thm:aux_GFPR3} or \ref{thm:aux_GFPR4}.
Applying the corresponding theorems to $F(\lambda)$ and $G(\lambda)$, and a simple block-permutation, one easily obtain the desired result.
\end{proof}


\begin{thebibliography}{1}

\bibitem{GFP} E. N. Antoniou and S. Vologiannidis.
\newblock A new family of companion forms of polynomial matrices.
\newblock {\em Electron. J. Linear Algebra}, 11 (2004), 78--87.

\bibitem{Her-GFPR}
M.\ I.\ Bueno, F.\ M.\ Dopico, and S.\ Furtado. 
\newblock  
 Linearizations of Hermitian matrix polynomials preserving the sign characteristic.
 \newblock {\em
SIAM J. Matrix Anal. Appl.}, 38 (2017), 73--101.

\bibitem{FPR1}
 M.\ I.\ Bueno, K.\ Curlett and S.\ Furtado.
 \newblock Structured strong linearizations from Fiedler pencils with repetition I.
 \newblock {\em
 Linear Algebra Appl.}, 460 (2014),  51--80.
 
\bibitem{GFPR} 
M.\ I.\ Bueno, F.\ M.\ Dopico, S.\ Furtado and M.\ Rychnovsky.
\newblock Large vector spaces of block-symmetric strong linearizations of matrix polynomials.
\newblock {\em 
 Linear Algebra Appl.}, 477 (2015), 165--210.

\bibitem{canonical_Fiedler}
M.\ Bueno, F.\ M.\ Dopico, J.\ P\'erez, R.\ Saavedra, and B.\ Zykoski.
\newblock A unified approach to  Fiedler-like pencils via strong block minimal bases pencils.
\newblock {\em
Submitted for publication} (2016).
Also available as \href{https://arxiv.org/abs/1611.07170}{arXiv:1611.07170}. 

\bibitem{FPR2}
M.\ I.\ Bueno and  S.\ Furtado.
\newblock 
Palindromic linearizations of a matrix polynomial of odd degree obtained from Fiedler pencils with repetition.
\newblock {\em Electron. J. Linear Algebra}, 23 (2012),  562--577.

\bibitem{FPR3}
M.\ I.\ Bueno and  S.\ Furtado.
\newblock  Structured linearizations from Fiedler pencils with repetition II.
\newblock {\em Linear Algebra Appl.}, 463 (2014),  282--321.

\bibitem{TPodd} 
M. \ I.\ Bueno, F.\ M. \ Dopico, S.\ Furtado and L.\ Medina.
\newblock 
Block-symmetric linearizations of odd degree matrix polynomials with optimal condition number and backward error. 
\newblock Preprint. 

\bibitem{TPeven} 
M. \ I.\ Bueno, F.\ M. \ Dopico, S.\ Furtado and L.\ Medina.
\newblock 
Block-symmetric linearizations of even degree matrix polynomials with optimal condition number and backward error. 
\newblock In preparation. 

 \bibitem{singular}
F.\ De Ter\'an, F.\ M.\ Dopico, and D.\ S.\ Mackey.
\newblock Linearizations of singular matrix polynomials and the recovery of minimal indices.
\newblock {Electron. J. Linear Algebra}, 18 (2009), 371--402.

\bibitem{DTDM10}
F.\ De Ter\'an, F.\ M.\ Dopico and D.\ S.\ Mackey.
\newblock Fiedler companion linearizations and the recovery of minimal indices.
\newblock {\em
 SIAM J. Matrix Anal. Appl.}, 31 (2010),  2181--2204.

\bibitem{DTDM11}
 F.\ De Ter\'an, F.\ M.\ Dopico and D.\ S.\ Mackey.
 \newblock Palindromic companion forms for matrix polynomials of odd degree.
 \newblock {\em
 J. Comput. Appl. Math.}, 236 (2011),  1464--1480.

\bibitem{DTDM12} 
 F. De Ter\'an, F. M. Dopico and D. S. Mackey.
 \newblock  Fiedler companion linearizations for rectangular matrix polynomials.
 \newblock {\em
 Linear Algebra Appl.}, 437 (2012),  957--991.

\bibitem{IndexSum}
F. De Ter\'an, F. M. Dopico and D. S. Mackey.
\newblock  Spectral equivalence of matrix polynomials and the Index Sum Theorem.
\newblock {\em
 Linear Algebra Appl.}, 459 (2014),  264--333.

\bibitem{canonical} 
F.\ M.\ Dopico, P.\ W.\ Lawrence, J.\ P\'erez and P.\ Van Dooren.
\newblock  Block Kronecker linearizations of matrix polynomials and their backward errors.
\newblock {\em 
Submitted for publication} (2016).
Available as \href{http://eprints.ma.man.ac.uk/2481/}{MIMS EPrint 2016.34}.
 
 \bibitem{Philip2016} 
H.\ Fa{$\ss$}bender and P.\ Saltenberger.
\newblock  Block-Kronecker ansatz spaces for matrix polynomials.
\newblock {\em Linear Algebra Appl.}, in press.
https://doi.org/10.1016/j.laa.2017.03.019

\bibitem{Forney}
G.\ D.\ Forney, Jr.
\newblock Minimal bases of rational vector spaces, with applications to multivariable linear systems.
\newblock {\em
 SIAM J. Control}, 13 (1975),  493--520.

 \bibitem{gantmacher} 
 F. \ R.\ Gantmatcher. 
 \newblock The Theory of Matrices.
 \newblock {\em AMS} Chelsea, Providence, RI, 1998.
 
 \bibitem{condition} N.\ J.\ Higham, D.\ S.\ Mackey, and F.\ Tisseur, 
 \newblock{The conditioning of linearizations of matrix polynomials.},  
 \newblock \emph{SIAM J. Matrix Anal. Appl.}, 28 (2006), 1005--1028. 
 
\bibitem{symmetric-Mackey}
N.\ Higham, D.\ S.\ Mackey, N.\ Mackey and F.\ Tisseur.
\newblock Symmetric linearizations for matrix polynomials.
\newblock {\em SIAM J.  Matrix Anal. Appl.}, 29(1) (2006), 143--159.

\bibitem{v4space} D.\ S. \ Mackey, N.\ Mackey, C.\ Mehl, and V.\ Mehrmann, \newblock{Vector spaces of linearizations for matrix polynomials}, SIAM J. Matrix Anal. Appl.,  28(4): 971--1004, 2006. 

\bibitem{GoodVibrations}
D.\ S.\ Mackey, N.\ Mackey, C.\ Mehl and V.\ Mehrmann.
\newblock  Structured polynomial eigenvalue problems: good vibrations from good linearizations.
\newblock {\em
 SIAM J. Matrix Anal. Appl.}, 28 (2006),  1029--1051.
 
\bibitem{4m-alt}
D.\ S.\ Mackey, N.\ Mackey, C.\ Mehl, and V.\ Mehrmann.
\newblock Jordan structures of alternating matrix polynomials.
\newblock {\em
Linear Algebra Appl.}, 432 (2010), 971--1004.

\bibitem{QZ}
C.\ B.\ Moler and G.\ W.\ Stewart.
\newblock  An algorithm for generalized matrix eigenvalue problems.
\newblock {\em
 SIAM J. Numer. Anal.}, 10(2) (1971),  241--256.
 
\bibitem{Leo}
L.\ Robol, R.\ Vandebril, and P.\ Van Dooren.
\newblock  A framework for structured linearizations of matrix polynomials in various bases.
\newblock {\em 
SIAM J. Matrix Anal. Appl.}, 38(1) (2017), 188--216.

\bibitem{backward}
F. \ Tisseur. 
\newblock Backward error and condition of polynomial eigenvalue problems.
\newblock \emph{Linear Algebra Appl.}, 309 (2000), 339--361.

\bibitem{staircase}
P.\ Van Dooren.
\newblock  The computation of Kronecker's canonical form of a singular pencil.
\newblock {\em
Linear Algebra Appl.}, 27 (1979),  103--140.

\bibitem{VanDooren83}
P.\ Van Dooren and P.\ Dewilde.
\newblock  The eigenstructure of an arbitrary polynomial matrix: computational aspects.
\newblock {\em
 Linear Algebra Appl.}, 50 (1983), 545--579.

\bibitem{ant-vol11}
S.\ Vologiannidis and E.\ N.\ Antoniou.
\newblock A permuted factors approach for the linearization of polynomial matrices.
\newblock {\em
 Math. Control Signals Syst.}, 22 (2011), 317--342.
\end{thebibliography}
\end{document}